\DeclareMathAlphabet{\mathb}{OML}{cmm}{b}{it}
\newcommand{\N}{\mathbb{N}}		
\newcommand{\R}{\mathbb{R}}		
\newcommand{\syst}[1]{\left \{ \begin{array}{l} #1 \end{array} \right. \kern-\nulldelimiterspace}	
\newcommand{\prox}{\mathrm{prox}}
\newcommand{\argmin}{\mathrm{argmin}}
\newcommand{\argmax}{\mathrm{argmax}}
\newcommand{\dom}{\mathrm{dom}\,}
\DeclareMathOperator{\Proj}{Proj}
\renewcommand{\int}{\mathrm{int}\,}
\newcommand{\argmind}[2]{\ensuremath{\underset{\substack{{#1}}}%
{\mathrm{argmin}}\;\;#2 }}
\newcommand{\lsc}{\Phi_{lsc}}
\newcommand{\proxlsc}{\mathrm{prox}^{\mathrm{lsc},\mathbb{R}}}
\newlength{\algorithmboxrule}
\newcommand{\algorithmboxcolor}{white}
\xpatchcmd*{\algocf@caption@boxruled}{0.0pt}{2\algorithmboxrule}{}{}
\xpatchcmd*{\algocf@caption@boxruled}{\vrule}{\vrule width \algorithmboxrule}{}{}
\xpatchcmd{\algocf@caption@boxruled}{\hrule}{\hrule height \algorithmboxrule}{}{}
\xpretocmd{\algocf@caption@boxruled}{\color{\algorithmboxcolor}}{}{}
\newtheorem{lemma}{Lemma}[section]
\newtheorem{proposition}[lemma]{Proposition}
\newtheorem{theorem}[lemma]{Theorem}
\theoremstyle{definition}
\newtheorem{definition}[lemma]{Definition}
\newtheorem{remark}[lemma]{Remark}
\newtheorem{example}[lemma]{Example}
\newtheorem{assumption}[lemma]{Assumption}
\title{Proximal Algorithms for a class of abstract convex functions}
\author{ Ewa Bednarczuk\thanks{Warsaw University of Technology,  00-662 Warsaw, Koszykowa 75, Poland} \thanks{Systems Research Institute, PAS,  01-447 Warsaw, Newelska 6, Poland}
\and Dirk Lorenz\thanks{Center for Industrial Mathematics, University of Bremen, Postfach 330440, 28334 Bremen, Germany,}
\and
The Hung Tran\footnotemark[2]}
\begin{document}
\maketitle
\begin{abstract}
  In this paper we analyze a class of nonconvex optimization problem from the viewpoint of abstract convexity. Using the respective generalizations of the subgradient we propose an abstract notion  proximal operator and derive a number of algorithms, namely an abstract proximal point method, an abstract forward-backward method and an abstract projected subgradient method. Global convergence results for all algorithms are discussed and numerical examples are given.
  
\end{abstract}
\vspace{0.4cm} 
\textbf{Keywords:} {
abstract convexity, proximal operator, forward-backward algorithm, global convergence, proximal subgradient}

\section{Introduction}

In this paper we aim to design a proximal mapping which can work for a specific class of nonconvex functions in the context of abstract convexity. Our goal is to derive proximal algorithms that still exhibit global convergence in this context.

Abstract convex functions have been studied in monographs by Rubinov \cite{Rub2013}, Pallaschke and Rolewicz \cite{Pall2013}, Singer \cite{sin1997} when they explored convexity without linearity. Given a Hilbert space $X$, a function $f: X\to (-\infty,+\infty]$ is convex with respect to the class of functions $\Phi =\{ \phi:X\to \R\}$, or we call $\Phi$-convex, if and only if 
\begin{equation*}
    f(x) = \sup_{\phi\leq f, \phi \in\Phi} \phi (x),
\end{equation*}
for all $x\in X$.
When $\Phi$ is the class of affine functions, then $f$ is lower semicontinuous and convex in the classical sense if and only if $f$ is $\Phi$-convex \cite[Theorem 9.20]{Bau2011}. By allowing the class $\Phi$ to contain nonlinear functions, we obtain a more general concept of convexity, called $\Phi$-convexity.

Various types of abstract convex functions have been discussed e.g. topical and sub-topical functions \cite{rubinov2001topical}, star-shaped functions \cite{rubinov1999}, and positive homogeneous functions \cite{rubinov2002abstract}. Moreover, we can generalize many concepts from convex analysis like conjugation and subdifferentials for solving optimization problems. Jeyakumar \cite{Jey2007} constructed conjugated dual problems for non-affine convex function using abstract conjugation. They also  examined duality between the primal and dual problems, and stated conditions which sum rule for subdifferentials holds. While Burachik \cite{burachik2007} studied duality of constrained problem using augmented Lagrangians built from abstract convexity. They also considered abstract monotone operator and compared them to maximal monotone operator in the classical sense \cite{burachik2008}.

In this work we will focus on the class of $\lsc^\R$-convex functions which covers weakly convex and strongly convex functions.
The framework of abstract convexity includes the class of so-called weakly convex functions which have been useful in applications such as source localization, \cite{beck2008exact}, phase retrieval, \cite{luke2019optimization}, and discrete tomography \cite{schule2005discrete, kadu2019convex} or distributed network optimization \cite{chen2021distributed}.

\subsection{Background and state of the art}
Despite having good theoretical properties, there are not many numerical algorithms for abstract convex optimization problems. Andramonov \cite{andra2002} analyzed cutting plane methods to solve minimization problem of $\Phi$-convex functions and Beliakov \cite{beliakov2003geometry} examined the same algorithm for the class of piecewise linear function. Zhou et al. \cite{zhou2016novel} applied cutting angle method to design a differential evolution algorithm using support hyperplanes as the class $\Phi$.

The elements of the class $\lsc^\R$ are quadratic functions defined on Hilbert space $X$ i.e.
\[
\phi (x) = -a \Vert x\Vert^2 +\langle u,x\rangle +c,
\]
where $u\in X, a,c\in \R$.
The main advantage of choosing this class is that iterative schemes we propose are practically realizable, i.e. we can calculate efficiently $\lsc^\R$-subdifferentials and $\lsc^\R$-conjugate.
Hence, in this work, we aim to design a proximal-based algorithm for the class of abstract convex functions minorized by the set of quadratic functions. Once the proximal mapping is defined, there are possibilities to extend the proximal point method for splitting algorithms like forward-backward or projected subgradient algorithm.

The proximal point method is one of the classical and well-known approach to find the minimizer of a convex lower semicontinuous function. It was first introduced by Moreau \cite{moreau1965proximite} to solve a convex optimization problem by regularization and further studied by Martinet \cite{martinet1970regularisation}, Rockafellar \cite{rockafellar1976monotone}, Brezis and Lions \cite{brezis1978produits} for solving variational inequalities of maximal monotone operators. The proximal point method enjoys nice properties of convergence to the global minima thanks to the basis of subdifferentials and affine minorization \cite{luque1984asymptotic,guler1991convergence,xu2006regularization,combettes2023resolvent}. 
Since the proximal map is defined for non-continuous, non-differentiable functions, it has become a popular tool to solve a wide-range of optimization problems. 
Recently, there are many attempts to make use of proximal mappings for solving nonconvex problems. Kaplan \cite{kaplan1998proximal} investigated the possibilities of proximal mapping for proper lower semicontinuous function $f$ such that $f+\frac{\chi}{2}\Vert \cdot\Vert^2$ is strongly convex with constant $\chi >0$. Hare \cite{hare2009computing} worked on the convergence properties of the class of prox-bounded and lower-$\mathcal{C}^2$ functions, and~\cite{bredies2015minimization} derives a proximal gradient method with a proximal map for non-convex functions.

Without convexity, the standard convex subdifferentials need not to exist. One remedy is to use different concept of subdiferentials, e.g. the most common one is Mordukhovich subdifferentials \cite{mordukhovich1976maximum} which is suitable for general nonconvex functions. However, it is locally defined so one can only have convergence to a critical point. Further convergence results have been made by assuming additional conditions on the function like error bound conditions \cite{solodov2000error} or a Kurdyka-\L ojasiewicz (KL) inequality \cite{attouch2013convergence}, while others focus on specific class of functions e.g. difference of convex functions \cite{mainge2008convergence} or weakly convex functions \cite{davis2019}. In this work, thanks to the definition of $\lsc^\R$-conjugate and $\lsc^\R$-subdifferentials, we can obtain convergence results without relying on additional properties of the minorzed functions. Moreover, as those definitions are globally established, our convergence results are global.


\subsection{Notation}
We consider a Hilbert space $X$ with norm $\left\Vert \cdot\right\Vert _{X}$ and inner product $\left\langle \cdot,\cdot\right\rangle _{X}$. For a nonempty set $C\subset X, \Proj_C(x)$ denotes the projection of $x$ onto $C$. 
We define $\Phi$ as a collection of real-valued functions $\phi:X\to\mathbb{R}$. The domain of a function $f: X\to [-\infty,+\infty]$ is denoted as $\dom f=\{ x\in X: f(x) <+\infty\}$. A function $f$ is proper if $\dom f\neq \emptyset$.
For a set-valued operator $A: X \rightrightarrows Y$, its domain and range are defined as
\begin{equation*}
    \dom A = \left\{ x\in X: Ax \neq \emptyset\right\},\quad \mathrm{ran } A = \left\{ Ax: x\in X\right\},
\end{equation*}
and its inverse is defined by
\begin{align*}
  x\in A^{-1}y \iff y\in Ax.
\end{align*}
It holds that $\dom A^{-1} = \mathrm{ran } A$ and $\dom (A+B) = \dom A \cap \dom B$ for $A,B:X \rightrightarrows Y$ \cite{Bau2011}. We also define $A(\emptyset) = \emptyset$.
With $Id$ we denote the identity mapping. 

\subsection{Outline}
The outline of our paper is as follow: We give formal definition of abstract convex functions with respect to the set of quadratic functions, which we call $\lsc^\R$-convex functions, and define $\lsc^\R$-subdifferentials as well as some examples in Section \ref{sec: preliminary}. In Section \ref{sec: prox operator}, we construct $\lsc^\R$-proximal mapping with respect to the class $\lsc^\R$ and make connection between the fixed point of $\lsc^\R$-operator and global minimiser. We show that the classical proximal operator can be included in $\lsc^\R$-proximal operator.
This new $\lsc^\R$-proximal operator is the main ingredient for $\lsc^\R$-proximal point method, solving global minimization problem in Section \ref{sec: lsc PPA}. Auxiliary convergence results are mentioned which can be applied for all the algorithms in this paper. After $\lsc^\R$-proximal point algorithm ($\lsc^\R$-PPA), we propose $\lsc^\R$-forward-backward algorithm ($\lsc^\R$-FB) in Section \ref{sec: FB algorithm} for the sum of two functions where one is Fr{\'e}chet differentiable with Lipschitz continuous gradient and the second one is $\lsc^\R$-convex. When one function is an indicator function of a closed set, ($\lsc^\R$-FB) algorithm is reduced to $\lsc^\R$-Projected Subgradient algorithm ($\lsc^\R$-PSG). We point out some similarities of ($\lsc^\R$-PSG) when proving convergence with Projected Subgradient algorithm in the convex case (Section \ref{sec: projected sub alg}). Finally, we present some numerical examples in Section \ref{sec: numerical examples} where we apply our $\lsc^\R$-Projected Subgradient algorithm to solve a nonconvex quadratic problem. 

\section{$\lsc^\R$-Convexity and $\lsc^\R$-Subdifferentials}
\label{sec: preliminary}

For a class $\Phi$ of functions of the type $\phi:X\to\mathbb{R}$ we define $\Phi$-convexity as follows.

\begin{definition}\cite{Pall2013,Rub2013}
\label{def:Phi-convex}
A function $f:X\to\left(-\infty,+\infty\right]$ is said to be $\Phi$-convex on $X$
if and only if we can write 
\[
f\left(x\right)=\sup_{\phi\in\Phi,\phi\leq f}\phi\left(x\right),\quad\forall x\in X.
\]
\end{definition}
The functions $\phi\in\Phi$ are called elementary functions. Depending on the choice of the set $\Phi$, we obtain different types of $\Phi$-convex function.
When $\Phi$ is the class of all affine functions, then $f$ is $\Phi$-convex if and only if it is a proper lsc convex function.

We are interested in the following specific class of elementary functions
\[
\lsc^\R=\left\{ \phi:\phi\left(x\right)=-a\left\Vert x\right\Vert ^{2}+\left\langle u,x\right\rangle +c  \text{ where } a,c\in \R,u\in X\right\}.
\]
Notice that $\lsc^\R$ includes the class of affine functions, so a proper lsc convex function is also $\lsc^\R$-convex. In fact, $\lsc^\R$ also covers the class of strongly convex, weakly convex and DC convex functions (see \cite{Pall2013,Rub2013,zalinescu2002convex}). 
For instance, one can define the set of elementary functions
\[
\lsc^a =\left\{ \phi:\phi\left(x\right)=-a\left\Vert x\right\Vert ^{2}+\left\langle u,x\right\rangle +c  \text{ where } c\in \R,u\in X\right\},
\]
by fixing the coefficient $a\in\R$. When $a>0$, $\lsc^a$-convexity is equivalent to weak convexity ($2a$-weakly convex), if $a<0$, $\lsc^a$-convexity is equivalent to strong convexity ($2a$-strongly convex) (see \cite{zalinescu2002convex,vial1983strong}). By \cite[Corollary 11.17]{Bau2011}, any $\lsc^a$-convex function, $a<0$, is supercoercive and has a unique minimiser.
Another example is the sub-class of $\lsc^\R$, where one considers the coefficients $a\geq 0$ i.e.
\begin{equation*}
    \lsc^{\geq}=\left\{ \phi:\phi\left(x\right)=-a\left\Vert x\right\Vert ^{2}+\left\langle u,x\right\rangle +c  \text{ where } a \geq 0,c\in \R,u\in X\right\}.
\end{equation*}
which has been studied extensively in \cite{Rub2013} and further investigated in \cite{Bed2020}.
In fact, the class of $\lsc^{\geq}$-convex functions has been proved to coincide with the set of all lower semi-continuous functions minorized by a function from $\lsc^{\geq}$ on $X$ \cite[Proposition 6.3]{Rub2013}.

\begin{proposition}
\label{proP:121}
Let $X$ be a Hilbert space, $f:X\to (-\infty,+\infty]$ be proper. We have ${\lsc^{\geq} \subset \lsc^\R }$. If $f$ is $\lsc^\R$-convex and there exists $\phi\in\lsc^\R$ with $a_\phi <0$ and
\begin{equation}
\exists x\in \dom f, f(x)=\phi (x) \text{ and } f(y) \geq \phi (y), \ \forall y\in X, \label{eq: phi in lsc R}
\end{equation}
then $\lim_{\Vert x\Vert \to +\infty} f(x) = +\infty$  and there exists $\psi\in\lsc^{\geq}$ such that
\begin{equation}
f(x) =\psi (x), \text{ and } f(y)\geq \psi (y), \ \forall y\in X, \label{eq: psi in phi lsc}    
\end{equation}
which implies $f$ is $\lsc^{\geq}$-convex.
\end{proposition}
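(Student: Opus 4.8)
The plan is to establish the three claims in turn. The inclusion $\lsc^{\geq}\subset\lsc^\R$ is immediate from the definitions: an element of $\lsc^{\geq}$ has the form $-a\Vert\cdot\Vert^{2}+\langle u,\cdot\rangle+c$ with $a\ge 0$, which is a particular case of the form defining $\lsc^\R$ (where $a$ may be any real number). I would also record at this stage that every $\lsc^\R$-convex function $f$ is lower semicontinuous, being a pointwise supremum of the continuous functions $\phi\in\lsc^\R$ with $\phi\le f$ --- a family that is nonempty by \eqref{eq: phi in lsc R}.

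For the coercivity statement it suffices to use that $f\ge\phi$ with $a_\phi<0$. Writing $\phi(y)=-a_\phi\Vert y\Vert^{2}+\langle u_\phi,y\rangle+c_\phi$ and applying Cauchy--Schwarz gives $f(y)\ge\phi(y)\ge |a_\phi|\,\Vert y\Vert^{2}-\Vert u_\phi\Vert\,\Vert y\Vert+c_\phi$; since $|a_\phi|>0$, the right-hand side tends to $+\infty$ as $\Vert y\Vert\to+\infty$, whence $\lim_{\Vert y\Vert\to+\infty}f(y)=+\infty$.

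The heart of the proof is the construction of $\psi$, and the idea is to \emph{linearize} the quadratic minorant $\phi$ at the contact point $x$ appearing in \eqref{eq: phi in lsc R}. Expanding the quadratic $\phi$ exactly around $x$ yields, for all $y\in X$,
\[
\phi(y)=\phi(x)+\langle -2a_\phi x+u_\phi,\,y-x\rangle+|a_\phi|\,\Vert y-x\Vert^{2}.
\]
Define $\psi(y):=\phi(x)+\langle -2a_\phi x+u_\phi,\,y-x\rangle$. Then $\psi$ is affine, hence lies in $\lsc^{\geq}$ (it has coefficient $a_\psi=0\ge 0$), and because $|a_\phi|>0$ the identity above shows $\psi\le\phi$ with $\psi(x)=\phi(x)$. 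Combined with $\phi\le f$ and $f(x)=\phi(x)$, this gives $f(y)\ge\psi(y)$ for all $y\in X$ and $f(x)=\psi(x)$, which is exactly \eqref{eq: psi in phi lsc}. Since $f$ is moreover lower semicontinuous and minorized by $\psi\in\lsc^{\geq}$, \cite[Proposition 6.3]{Rub2013} yields that $f$ is $\lsc^{\geq}$-convex.

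I do not anticipate a genuine difficulty here. The one point that requires a little care is ensuring that the new minorant stays tight at the \emph{same} point $x$ (rather than merely at the vertex of the bowl $\phi$), which the exact second-order expansion of $\phi$ handles automatically; one also has to check --- trivially --- that an affine function belongs to $\lsc^{\geq}$, which it does with the coefficient $a=0$.
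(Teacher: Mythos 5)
Your proposal is correct and follows essentially the same route as the paper: the affine function obtained by the exact expansion of $\phi$ at the contact point $x$ is precisely the paper's $\psi$ with $u_\psi=-2a_\phi x+u_\phi$ and $c_\psi=a_\phi\Vert x\Vert^2+c_\phi$, and the coercivity argument is the same (your Cauchy--Schwarz bound just makes the limit step explicit). The added remarks on lower semicontinuity and the appeal to \cite[Proposition 6.3]{Rub2013} for the final implication are consistent with how the paper uses that result.
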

\begin{proof}
By assumption \eqref{eq: phi in lsc R}, we know that 
\begin{equation}
f(y) \geq \phi (y) = -a_\phi \Vert y\Vert^2 +\langle u_\phi, y\rangle +c_\phi, \label{eq: f >= phi lsc R}
\end{equation}
for all $y\in X$ with $a_\phi <0$. By taking the limit both sides of \eqref{eq: f >= phi lsc R} with $\Vert y\Vert \to +\infty$, we have
\begin{equation}
    \lim_{\Vert y\Vert \to +\infty} f(y) \geq \lim_{\Vert y\Vert \to +\infty} \phi (y) = +\infty.
\end{equation}
Now, we want to find $\psi\in\lsc^{\geq}$ such that \eqref{eq: psi in phi lsc} holds. For simplicity, we can find $\psi\in\lsc^{\geq}$ with the form
\[
\psi (y) = \langle u_\psi,y\rangle + c_\psi, \ \phi(y) \geq \psi(y), \ \forall y\in X, \text{ and } \phi(x)=\psi (x).
\]
This means we need to find $u_\psi,c_\psi$ such that 
\begin{equation*}
    h(y) := \phi(y)-\psi(y)\geq 0 \ \forall y\in X, \text{ and } h(x) =0.
\end{equation*}
By solving the following system
\begin{align}
    -a_\phi \Vert y\Vert^2 +\langle u_\phi, y\rangle +c_\phi &\geq  \langle u_\psi, y\rangle +c_\psi  \\
    -2a_\phi x +u_\phi -u_\psi =0,
\end{align}
which gives us $u_\psi = -2a_\phi x +u_\phi$ and $c_\psi = a_\phi \Vert x\Vert^2 +c_\phi$.
\end{proof}

With the definition of $\lsc^\R$-convexity, one can provide the definition of $\lsc^\R$-conjugate and $\lsc^\R$-subdifferentials in a similar way as it is done in convex analysis.
In the following, we present the definition of $\lsc^\R$-subddifferentials.

\begin{definition}\cite{Pall2013,Rub2013}
\label{def:Phi-subdiff}
The $\lsc^\R$-subgradient of $f$ at $x_{0}\in\mathrm{dom }f$ is an element $\phi\in\lsc^\R$ such that
\begin{align}
\label{eq: def subdiff}
\left(\forall y\in X\right)\quad f\left(y\right)-f\left(x_{0}\right) & \geq\phi\left(y\right)-\phi\left(x_{0}\right).
\end{align}
The collection of all such $\phi$ satisfying \eqref{eq: def subdiff} is called $\lsc^\mathbb{R}$-subdifferential and is denoted by $\partial_{lsc}^\R f(x_0)$.
Analogously, we define $\lsc^{\geq}$-subdifferentials and $\lsc^{a}$-subdifferentials of $f$ and denote as $\partial_{lsc}^{\geq} f$ and $\partial_{lsc}^a f$, respectively. In particular, $\partial_{lsc}^0 f$ denotes the subdifferentials in the sense of convex analysis.
\end{definition}

Clearly, $\partial_{lsc}^\R  f$ is a set-valued mapping $\partial_{lsc}^\R  f: X \rightrightarrows \lsc^\R $ and $\dom\partial_{lsc}^\R  f = \{ x\in X: \partial_{lsc}^\R  f(x) \neq \emptyset\}$.
If $x\notin \dom f$ then $\partial_{lsc}^\R  f(x) =\emptyset$. For an $\lsc^\R$-convex function $f$ and $x_0\in\dom f$, $\partial_{lsc}^\R f(x_0) \neq \emptyset$ if there exists $\phi\in\lsc^\R, \phi \leq f$ and $\phi (x_0) = f(x_0)$. 

By Proposition \ref{proP:121}, we see that the class of $\lsc^\R$-convex functions coincides with the class of $\lsc^{\geq}$-convex functions. On the other hand, the set $\lsc^\R$ is larger than $\lsc^{\geq}$ which implies $\lsc^\R$-subdifferentials can be larger than $\lsc^{\geq}$-subdifferentials. This is the reason why we decide to use the class $\lsc^\R$-convex functions in the sequel.

\begin{figure}[ht]
    \centering
    \includegraphics[scale=0.8]{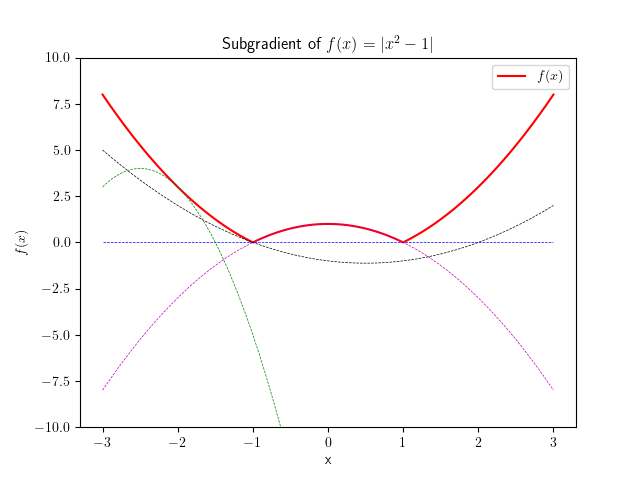}
    \caption{$\lsc^\R$-Subgradient of $f$ at different points}
    \label{fig:subgrad plot}
\end{figure}


\begin{remark}
\
\begin{enumerate}
    \item From the definition of $\lsc^\R$-subdifferentials, the constant $c$ cancels in \eqref{def:Phi-subdiff}. Therefore, for $x_0\in \dom f, \phi\in \partial_{lsc}^\R f(x_0)$ and any real $c\in\R$, the function  
    \begin{equation*}
        \phi (\cdot) -c = -a \Vert \cdot\Vert^2 +\langle u,\cdot \rangle \in \partial_{lsc}^\R f(x_0).
    \end{equation*}
It is shown in \cite{Bed2020} that the constant $c$ is not important in the study of conjugate duality and can be neglected . 

\item The above remark does not imply that the constant term is insignificant. In fact, its purpose is to serve as a connection between $\lsc^\R$-convexity and $\lsc^\R$-subdifferentiability. Let us consider $x_0\in\dom f$ and $\phi\in\partial_{lsc}^\R f(x_0)$ 
\begin{equation*}
    (\forall y\in X) \quad f(y) -f(x_0) \geq \phi (y) -\phi (x_0).
\end{equation*}
The function $h(x) := \phi(x) - \phi(x_0) +f(x_0) \leq f(x)$ for all $x\in X$ and $f(x_0) = h(x_0)$. Hence, $h$ is a $\lsc^\R$-minorant of $f$, i.e. $h\in\lsc^\R$ with the constant term $-a \Vert x_0\Vert^2 +\langle u,x_0 \rangle +f(x_0)$. Therefore, the function $f$ is $\lsc^\R$-convex on $X$ if the domain of $\lsc^\R$-subdifferentials is the whole space $X$. 
\item We point out that the $\lsc^\R$-subdifferentiability of $f$ on $X$ implies $\lsc^\R$-convexity of $f$ on $X$  but not conversely: Consider the function $f(x) = -\Vert x \Vert$, it is $\lsc^\R$-convex but its $\lsc^\R$-subdifferentials is empty at $x=0$.

\item In general, for any class $\Phi$, we have $f\in \partial_{\Phi} f(x)$ for all $x\in X$ if $f\in \Phi$. 
Even if $f\in\Phi$ is differentiable and $\Phi$-convex then $\partial_\Phi$ is still a set (in convex analysis, subdifferentials is unique and coincide with the gradient when the function is differential and convex, see Lemma \ref{lem: lsc subdiff and grad} below).
\end{enumerate}
\end{remark}

Since we mostly deal with $\lsc^\R$-subdifferentials, in the sequel, we identify elements $\phi\in\lsc^{\R}$ with pairs $\left(a,u\right)$ where $a\in \R ,u\in X$.

We demonstrate some examples of $\lsc^\R$-subdifferentials which will be important in the following sections.

\begin{example}
\label{ex:1 J function}
Let $\gamma>0$ and consider the function $g_\gamma \left(x\right)=\frac{1}{2\gamma}\left\Vert x\right\Vert ^{2}$. We calculate the $\Phi_{lsc}^\R$-subgradient of $g_\gamma$ at $x_0\in X$.
By definition, $\phi=(a,u)\in \partial_{lsc}^\R g_\gamma(x_0)$ has to satisfy
\begin{align*}
    g_\gamma (y)-g_\gamma (x_0) \geq \phi (y) -\phi (x_0),
\end{align*}
for all $y\in X$.
Simplifying both sides gives
\begin{align}
    \label{eq: ex1 subgrad def}
   \left(\frac{1}{2\gamma}+a\right) \left\Vert y\right\Vert ^{2} -\langle u,y\rangle\geq  \left(\frac{1}{2\gamma}+a\right) \left\Vert x_0\right\Vert ^{2}- \langle u, x_0\rangle.
\end{align}
\begin{itemize}
\item If $a=-1/(2\gamma)$, then $\langle u,y-x\rangle \leq 0$ for all $y\in X$, by \eqref{eq: ex1 subgrad def} $u$ musts be zero. 

\item If $a > -1 /(2\gamma)$, from \eqref{eq: ex1 subgrad def} we have
\begin{equation}
    \label{eq: ex1 case 2}
    (\forall y\in X) \qquad \left(\frac{1}{2\gamma}+a\right) \left\Vert y - \frac{u}{\frac{1}{\gamma} +2a}\right\Vert ^{2} \geq  \left(\frac{1}{2\gamma}+a\right) \left\Vert x_0 - \frac{u}{\frac{1}{\gamma} +2a}\right\Vert ^{2}.
\end{equation}
This means $u= \left(\frac{1}{\gamma}+2a\right) x_0$.
In particular, $\partial_{lsc}^a g_\gamma (x_0) = \{\phi\in\lsc^a: u = \left(\frac{1}{\gamma}+2a\right) x_0\}$.

\item If $a<-1 /(2\gamma)$, from \eqref{eq: ex1 case 2} it should be
\begin{equation}
    (\forall y\in X) \qquad \left\Vert y - \frac{u}{\frac{1}{\gamma} +2a}\right\Vert ^{2} \leq  \left\Vert x_0 - \frac{u}{\frac{1}{\gamma} +2a}\right\Vert ^{2},\nonumber
\end{equation}
which is impossible. 
\end{itemize}
Hence, the $\lsc^\R$-subddifferentials of $g_\gamma$ at $x_0$ takes the form
\begin{equation}
\partial_{lsc}^\R g_\gamma \left(x_{0}\right)=\left\{ \phi\in\lsc^\R: \phi =\left(a,\left(\frac{1}{\gamma}+2a\right)x_{0}\right),\ 2\gamma a\geq -1\right\},
\label{eq: J_gamma form}
\end{equation}
and similarly, we calculate
\[
\partial_{lsc}^{\geq} g_\gamma \left(x_{0}\right)=\left\{ \phi\in\lsc^\R: \phi =\left(a,\left(\frac{1}{\gamma}+2a\right)x_{0}\right)\right\}.
\]
\end{example}
Notice that $g_\gamma \in \partial_{lsc}^\R g_\gamma (x_0)$ for any $x_0\in X$ as we can write $g_\gamma = \left(-\frac{1}{2\gamma},0\right) \in\lsc^\R$.

Now for any $\phi\in\lsc^\R$, we calculate 
\begin{equation*}
\left(\partial_{lsc}^\R g_\gamma \right)^{-1}\left(\phi\right) =\left\{ x_0\in X:\phi \in \partial_{lsc}^\R g_\gamma (x_0)\right\}.
\end{equation*}
Consider the following cases
\begin{itemize}
\item If $2\gamma a < -1$, then $\left(\partial_{lsc}^\R g_\gamma \right)^{-1}\left(\phi\right) = \emptyset$ as we need $2\gamma a \geq -1$ as in Example \ref{ex:1 J function}. 

\item If $2\gamma a > -1$, for $\phi\in\partial_{lsc}^\R g_\gamma (x_0)$ must take the form $\phi = (a,u) = \left(a,\left(\frac{1}{\gamma}+2a\right)x_{0} \right)$. We can find $x_0$ by letting $u = \left(\frac{1}{\gamma}+2a\right)x_{0}$ i.e.
\begin{equation*}
 x_0 = \frac{1}{\frac{1}{\gamma}+2a}u=\frac{\gamma}{1+2\gamma a}u.
\end{equation*}

\item If $a=-\frac{1}{2\gamma}$, then $u$ must be zero and $\left(-\frac{1}{2\gamma},0\right)\in \partial_{lsc}^\R g_\gamma(x)$ for all $x\in X$. 
\end{itemize}
In conclusion, for any $\phi\in \lsc^\R$, we have
\begin{align}
\label{eq: subgrad of J_gamma}
\left(\partial_{lsc}^\R g_\gamma \right)^{-1}\left(\phi\right) = \begin{cases}
\left\{\frac{\gamma}{1+2\gamma a}u\right\} & \text{ if } 2\gamma a > -1\\
X & \text{ if } a=-\frac{1}{2\gamma}, u=0 \\
\emptyset & \text{ otherwise }\\
\end{cases}.
\end{align}

Observe that 
\begin{equation}
\label{eq: identity of J_gamma}
\phi \in \partial_{lsc}^\R g_\gamma\left( (\partial_{lsc}^\R g_\gamma) ^{-1} (\phi)\right),
\end{equation}
for all $\phi \in \dom (\partial_{lsc}^\R g_\gamma)^{-1}$. Clearly, when $a=-1/(2\gamma),$ $u=0$, \eqref{eq: identity of J_gamma} holds trivially. When $\phi=(a,u)\in \lsc^\R$ with $2\gamma a >-1$, then 
\begin{equation*}
    \partial_{lsc}^\R g_\gamma \left((\partial_{lsc}^\R g_\gamma)^{-1} (\phi)\right) = \partial_{lsc}^\R g_\gamma \left( \frac{\gamma u}{1+2\gamma a}\right) = \left\{ \phi_1\in\lsc^\R : \phi_1 = \left(a_1,\frac{1+2\gamma a_1}{1+2\gamma a}u \right), a_1 \geq -\frac{1}{2\gamma}\right\}.
\end{equation*}
Since $a_1 \geq -1/2\gamma$, it is clear that $\phi$ lies inside the above set. Hence, \eqref{eq: identity of J_gamma} holds.

\begin{example}
\label{ex: xQx}
Let $X = \mathbb{R}^{n}$, consider the function $f\left(x\right)=\left\langle x,Qx\right\rangle $ where
$Q\in\mathbb{R}^{n\times n}$ is a real symmetric matrix. We compute the $\lsc^\R$-subdifferential of $f$ at $x\in\R^n$. We need to find $\phi\in\lsc^\R$ such that $\phi = (a,u)\in\partial_{lsc}^\R f\left(x\right)$. By definition, we have
\begin{equation}
(\forall y\in \R^n) \qquad \left\langle y,\left(Q+aId\right)y\right\rangle -\left\langle x,\left(Q+aId\right)x\right\rangle  \geq\left\langle u,y-x\right\rangle .\label{eq:example_1}
\end{equation}
For any matrix $A\in \R^{n\times n}$, the following holds
\begin{align*}
\left\langle y,Ay\right\rangle -\left\langle x,Ax\right\rangle  & =\left\langle y-x,A\left(y-x\right)\right\rangle +\left\langle y-x,Ax\right\rangle +\left\langle y,A^{\top}x\right\rangle -\left\langle x,Ax\right\rangle \\
 & =\left\langle y-x,A\left(y-x\right)\right\rangle +\left\langle y-x,\left(A+A^{\top}\right)x\right\rangle ,
\end{align*}
where $A^{\top}$ is the transpose of $A$. 
Since $Q$ is real symmetric, we apply the above identity for $A =\left(Q+aId\right)$ which is also real symmetric i.e. $A^{\top} =A$.
Then inequality \eqref{eq:example_1} becomes 
\begin{equation}
\left\langle y-x,\left(Q+aId\right)\left(y-x\right)\right\rangle \geq\left\langle u-2\left(Q+aId\right) x,y-x\right\rangle.\label{eq:example_1 1}
\end{equation}
We can diagonalize $(Q+a Id)$ into $P(D+a Id)P^\top$ with a diagonal matrix $D$ with the eigenvalues of $Q$ on the diagonal and $P$ an orthogonal matrix which contains the corresponding eigenvectors. Hence, we have
\begin{align}
    \left\langle y-x,\left(Q+aId\right)\left(y-x\right)\right\rangle &= \left\langle y-x,P \left(D+aId\right)P^\top \left(y-x\right)\right\rangle \nonumber\\
    & = \left\langle P^\top (y-x),\left(D+a Id\right)P^\top \left(y-x\right)\right\rangle. \label{eq:example_1 1.2}
\end{align}
Plugging \eqref{eq:example_1 1.2} back into \eqref{eq:example_1 1}, we obtain
\begin{align}
    \left\langle P^\top (y-x),\left(D+a Id\right)P^\top \left(y-x\right)\right\rangle 
    & \geq\left\langle u-2\left(Q+aId\right) x, y-x\right\rangle .\label{eq:example_1 2}
\end{align}
As $P P^\top = Id$ is the identity matrix, by changing the variables $\overline{y} = P^\top y, \overline{x} = P^\top x$, \eqref{eq:example_1 2} turns into
\begin{equation}
    \left\langle \overline{y} - \overline{x},\left(D+a Id\right) \left(\overline{y}-\overline{x}\right)\right\rangle 
    \geq\left\langle P^\top u-2 \left(D+a Id\right) \overline{x}, \overline{y}-\overline{x}\right\rangle .\label{eq:example_1 3}
\end{equation}
Because \eqref{eq:example_1 2} holds for all $y\in \R^n$ and $P$ is orthogonal, so \eqref{eq:example_1 3} has to hold for all $\overline{y}\in \R^n$. 

As we are in $\mathbb{R}^n$, let us write \eqref{eq:example_1 3} explicitly 
\begin{equation}
    \label{eq:ex2 sum 1}
    \sum_{i=1}^n (d_i+a)(\overline{y}_i - \overline{x}_i)^2 \geq \sum_{i=1}^n (P^{\top}_i u -2(d_i+a) \overline{x}_i) (\overline{y}_i - \overline{x}_i),
\end{equation}
where $d_i$ is the $i$-th eigenvalue of $Q$ and $P^{\top}_i$ is the $i$-th ow of matrix $P^{\top}$.

\begin{itemize}
\item If there is $j\in\N$ such that $d_j+a = 0$ then we have, from \eqref{eq:ex2 sum 1}
\begin{align*}
    &\sum_{i\neq j =1}^n (d_i+a)\left[ (\overline{y}_i - \overline{x}_i) - \frac{P^{\top}_i u -2(d_i+a) \overline{x}_i}{2(d_i +a)}\right]^2 \nonumber\\
    &\geq \sum_{i\neq j=1}^n \frac{(P^{\top}_i u -2(d_i+a)  \overline{x}_i)^2}{4(d_i +a)}  +(P^{\top}_j u )(\overline{y}_j - \overline{x}_j).
\end{align*}
Note that all components of $\overline{y}$ are separable. This holds for all $\overline{y}\in\R^n$ so we can choose $\overline{y}_i=\overline{x}_i$ 
\begin{equation*}
    (\forall \overline{y}_j\in\R) \qquad 0 \geq (P^{\top}_j u )(\overline{y}_j - \overline{x}_j),
\end{equation*}
which implies $P^{\top}_j u = 0$. We can extended this to the case where there are repeated eigenvalues i.e. $d_j +a= d_{j+1}+a=\dots =d_k +a=0$ for $1\leq j <k \leq n$.
\item Now we assume that $d_i+a \neq 0$ for all $i=1,\dots,n$, we have 
\begin{equation}
    \label{eq: example 2 sum square}
    \sum_{i=1}^n (d_i+a)\left[ (\overline{y}_i - \overline{x}_i) - \frac{P^{\top}_i u -2(d_i+a) \overline{x}_i}{2(d_i +a)}\right]^2  \geq \sum_{i=1}^n \frac{(P^{\top}_i u -2(d_i+a) \overline{x}_i)^2}{4(d_i +a)} ,
\end{equation}
If there is $1\leq j\leq n$ such that $d_j +a <0$ while $i\neq j \leq n, d_i+a >0$
\begin{align}
    &\sum_{i\neq j =1}^n (d_i+a)\left[ (\overline{y}_i - \overline{x}_i) - \frac{P^{\top}_i u -2(d_i+a) \overline{x}_i}{2(d_i +a)}\right]^2  -\frac{(P^{\top}_j u -2(d_j+a)  \overline{x}_j)^2}{4(d_j +a)} \nonumber\\
    &\geq \sum_{i\neq j=1}^n \frac{(P^{\top}_i u -2(d_i+a)  \overline{x}_i)^2}{4(d_i +a)}  -(d_j+a)\left[ (\overline{y}_j - \overline{x}_j) - \frac{P^{\top}_j u -2(d_j+a) \overline{x}_j}{2(d_j +a)}\right]^2.
\end{align}
Both sides are non-negative and as $\overline{y}$ is separable, we follow the same argument as in the previous case to obtain
\begin{equation*}
    (\forall \overline{y}_j\in\R)\qquad -\frac{(P^{\top}_j u -2(d_j+a)  \overline{x}_j)^2}{4(d_j +a)} \geq -(d_j+a)\left[ (\overline{y}_j - \overline{x}_j) - \frac{P^{\top}_j u -2(d_j+a) \overline{x}_j}{2(d_j +a)}\right]^2,
\end{equation*}
This implies $d_j+a=0$ or the above inequality cannot hold. Therefore, we only need to consider $d_i+a >0$ for all $1\leq i\leq n$. From \eqref{eq: example 2 sum square}, we derive that RHS is zero which is $P^{\top}_i u -2(d_i+a) \overline{x}_i = 0$ for all $1\leq i\leq n$. 
\end{itemize}
Combining all the cases above, we obtain that $\left(D+a Id\right)$ is non-negative semi-definite and $P^{\top} u- 2\left(D+a Id\right)\overline{x} =0$.
Hence, $a$ has to take a value $a\geq -\min \lambda_Q $ where $\lambda_{Q}$ are the eigenvalues of $Q$.
In conclusion, 
\[
\partial_{lsc}^\R f\left(x\right)=\left\{ \phi\in\Phi_{lsc}^\R :a\geq -\min \lambda_Q ,u= 2\left(Q+a  Id\right) x\right\} .
\]

\end{example}

\begin{example}
Let us consider the indicator function of a nonempty set $C$ of Hilbert space $X$
\[
f\left(x\right)=\iota_{C}\left(x\right)=\begin{cases}
0 & x\in C\\
+\infty & \text{otherwise}
\end{cases}
\]
and calculate $\lsc^\R$-subgradients of $f$ at $x\in X$. If $x\notin C$ then $\partial_{lsc}^\R f\left(x\right)=\emptyset$, so we consider $x\in C$. Let us take $(a,u) \in\partial_{lsc}^\R f(x)$
\begin{align*}
(\forall y\in X) \qquad 
f\left(y\right) & \geq-a\left(\left\Vert y\right\Vert ^{2}-\left\Vert x\right\Vert ^{2}\right)+\left\langle u,y-x\right\rangle .
\end{align*}
When $y\notin C$, the LHS becomes $+\infty$
and the inequality if fulfilled for all $a$ and $u$. Hence, the above inequality reduces to
\begin{equation}
(\forall y\in C) \quad a\left\Vert y\right\Vert ^{2}-\left\langle u,y\right\rangle \geq a\left\Vert x\right\Vert ^{2}-\left\langle u,x\right\rangle .\label{eq:subgrad indicator a not 0}
\end{equation}
\begin{itemize} 
\item If $a=0$ then we have $\langle u,y-x\rangle \leq 0$ for all $y\in C$, this means $u$ belongs to the normal cone of $C$ at $x$ in the sense of convex analysis. 
\item If $a>0$, we complete the square in \eqref{eq:subgrad indicator a not 0} to obtain
\begin{equation}
a\left\Vert y-\frac{u}{2a}\right\Vert ^{2}\geq a\left\Vert x-\frac{u}{2a}\right\Vert ^{2}.\label{eq:subgrad ind a not 0-1}
\end{equation}
Since $a>0$ and this holds for all $y\in C$, we can remove $a$
and taking the infimum with respect to $y\in C$ 
\[
\inf_{y\in C}\left\Vert y-\frac{u}{2a}\right\Vert ^{2}\geq\left\Vert x-\frac{u}{2a}\right\Vert ^{2}.
\]
This means that $x$ is a projection of $u/(2a)$ onto the set $C$.

\item If $a < 0$, then \eqref{eq:subgrad ind a not 0-1} changes sign when dividing by $a$
\begin{equation}
    \left\Vert y-\frac{u}{2a}\right\Vert ^{2}\leq \left\Vert x-\frac{u}{2a}\right\Vert ^{2},
\end{equation}
This leads to $x \in \argmax_{y \in C} \left\Vert y-\frac{u}{2a}\right\Vert$.
\end{itemize}
In conclusion, for $x\in C$
\begin{equation}
    \partial_{lsc}^\R \iota_C \left(x\right) \subseteq \left\{ \phi=(a,u) \in\lsc^\R: 
    \begin{cases}
    (\forall y\in C)\quad \langle u,y-x\rangle \leq 0 & a=0 \\
    x \in \argmin_{y\in C} \Vert y - \frac{u}{2a}\Vert & a >0 \\
    x \in \argmax_{y\in C} \Vert y - \frac{u}{2a}\Vert & a<0 
    \end{cases}
    \right\}.
    \label{eq: subgrad indicator C final}
\end{equation}
In fact if $(a,u)$ belongs to the set on the RHS of \eqref{eq: subgrad indicator C final}, then $(a,u)\in\partial_{lsc}^\R  f(x)$ by the same calculation. Finally,
\begin{equation*}
    \partial_{lsc}^\R \iota_C \left(x\right) = \left\{ \phi=(a,u) \in\lsc^\R: 
    \begin{cases}
    (\forall y\in C)\quad \langle u,y-x\rangle \leq 0 & a=0 \\
    x \in \argmin_{y\in C} \Vert y - \frac{u}{2a}\Vert & a >0 \\
    x \in \argmax_{y\in C} \Vert y - \frac{u}{2a}\Vert & a<0 
    \end{cases}
    \right\}.
\end{equation*}
\end{example}
Observe that if $a=0$ then $\lsc^0$-convex $f$ implies $f$ is convex, i.e. $\iota_C$ is convex or $C$ is convex.
In fact, we show that there is a connection between $\partial_{lsc}^\R \iota_C (x)$ and the proximal normal cone \cite[Chapter 1.1]{clarke2008nonsmooth} which is defined as follows: For a point $x\in C$, the proximal normal cone to the set $C$ at $x$ is the set
\begin{equation}
    \label{eq: proximal normal cone}
    N_p (x,C) =\left\{ v\in X: \exists t>0, \mathrm{ dist} (x+tv,C) = t\Vert v\Vert\right\}.
\end{equation}
\begin{proposition}
For every nonempty set $C\subset X$ and $x\in C$, if $\phi=(a,u)\in \partial_{lsc}^\R \iota_C (x)$ then 
\[
\nabla \phi (x) = u-2a x \in N_p (x,C).
\]
On the other hand, for $v\in N_p (x,C)$ there exists $\phi\in\partial_{lsc}^\R \iota_C (x)$ such that $\nabla \phi (x) = v$.     
\end{proposition}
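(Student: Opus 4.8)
The plan is to read the statement off from the explicit formula for $\partial_{lsc}^\R\iota_C(x)$ obtained just above, together with the elementary observation that $N_p(x,C)$ is a cone containing $0$: if $\phi=(a,u)\in\partial_{lsc}^\R\iota_C(x)$ then $\nabla\phi(x)=u-2ax$, so it suffices to place a positive multiple of $\nabla\phi(x)$ in $N_p(x,C)$. I would split according to the three branches of the characterization. If $a=0$, then $\nabla\phi(x)=u$ with $\langle u,y-x\rangle\le 0$ for all $y\in C$; expanding $\Vert x+tu-y\Vert^2=\Vert x-y\Vert^2+2t\langle u,x-y\rangle+t^2\Vert u\Vert^2\ge\Vert x-y\Vert^2+t^2\Vert u\Vert^2\ge t^2\Vert u\Vert^2$ for every $t>0$ and $y\in C$ shows $\mathrm{dist}(x+tu,C)=t\Vert u\Vert$ (the reverse inequality holding trivially since $x\in C$), hence $u\in N_p(x,C)$. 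If $a>0$, set $z=u/(2a)$; the condition $x\in\argmin_{y\in C}\Vert y-z\Vert$ says exactly $x\in\Proj_C(z)$, i.e. $\mathrm{dist}(z,C)=\Vert z-x\Vert$, so taking $t=1$ in the definition of $N_p$ gives $z-x\in N_p(x,C)$; since $\nabla\phi(x)=2a(z-x)$ with $2a>0$, the cone property finishes this case.

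The case $a<0$ is the one that needs a little more care. With $z=u/(2a)$ the branch condition is $x\in\argmax_{y\in C}\Vert y-z\Vert$, i.e. $\Vert y-z\Vert^2\le\Vert x-z\Vert^2$ for all $y\in C$; writing $y-z=(y-x)+(x-z)$ and expanding yields $\Vert y-x\Vert^2+2\langle y-x,x-z\rangle\le 0$, that is $\Vert x-y\Vert^2\le 2\langle x-y,x-z\rangle$ for every $y\in C$. Then, for any $t>0$ and $y\in C$,
\[
\Vert x+t(x-z)-y\Vert^2=\Vert x-y\Vert^2+2t\langle x-y,x-z\rangle+t^2\Vert x-z\Vert^2\ge t^2\Vert x-z\Vert^2,
\]
so $\mathrm{dist}(x+t(x-z),C)=t\Vert x-z\Vert$ and $x-z\in N_p(x,C)$; since $\nabla\phi(x)=2a(z-x)=(-2a)(x-z)$ with $-2a>0$, the cone property again gives $\nabla\phi(x)\in N_p(x,C)$.

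For the converse, let $v\in N_p(x,C)$ and pick $t_0>0$ with $\mathrm{dist}(x+t_0 v,C)=t_0\Vert v\Vert=\Vert x-(x+t_0 v)\Vert$; since $x\in C$ this says $x\in\Proj_C(x+t_0 v)$. I would then take $a=\tfrac{1}{2t_0}>0$ and $u=\tfrac{1}{t_0}x+v$, so that $u/(2a)=x+t_0 v$ and hence $x\in\argmin_{y\in C}\Vert y-u/(2a)\Vert$; by the $a>0$ branch of the characterization this gives $\phi=(a,u)\in\partial_{lsc}^\R\iota_C(x)$, while $\nabla\phi(x)=u-2ax=v$, as required (the degenerate case $v=0$ is included in this construction, and also follows at once from $\phi=(0,0)\in\partial_{lsc}^\R\iota_C(x)$). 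The only genuine bookkeeping hurdle is keeping the signs straight in the $a<0$ case; every other step is a routine completion of squares.
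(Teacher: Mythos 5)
Your argument is correct and is essentially the paper's own proof: the forward direction rests on the same projection/farthest-point reading of the subgradient inequality (the paper argues directly from that inequality, treating $a\le 0$ at once with witness $t=1$ and $a>0$ with $t=\tfrac{1}{2a}$, whereas you route through the three-branch characterization of $\partial_{lsc}^\R \iota_C(x)$ and the positive homogeneity of $N_p(x,C)$, which is immediate from the definition by rescaling $t$), and your converse choice $a=\tfrac{1}{2t_0}$, $u=v+\tfrac{x}{t_0}$ coincides exactly with the paper's minorant $\phi(\cdot)=-\tfrac{1}{2t}\Vert\cdot\Vert^2+\langle v+\tfrac{x}{t},\cdot\rangle$. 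All case computations check out, so there is no gap.
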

\begin{proof}
    Let $x\in C$ and $\phi=(a,u) \in \partial_{lsc}^\R \iota_C (x)$. 
If $a\leq 0$, from \eqref{eq:subgrad indicator a not 0} we have for all $y\in C$
\begin{equation}
a\left\Vert y\right\Vert ^{2}-\left\langle u,y\right\rangle \geq a\left\Vert x\right\Vert ^{2}-\left\langle u,x\right\rangle \Rightarrow 0\geq  a \Vert y-x\Vert^2 \geq \langle u-2a x,y-x\rangle.
\end{equation}
The RHS can be further expressed
\begin{equation}
    0\geq \langle u-2a x,y-x\rangle = \frac{1}{2}\left[ \Vert u-2ax\Vert^2 + \Vert y-x\Vert^2 - \Vert y-x -(u-2ax)\Vert^2 \right]
\end{equation}
and finally
\begin{equation}
    \Vert y-x-(u-2ax)\Vert^2 \geq \Vert u-2ax\Vert^2.
\end{equation}
This holds for all $y\in C$ which implies that $x$ is a projection of $(x+u-2ax)$ onto $C$ which means $u-2ax \in N_p(x,C)$ with $t=1$.\\
If $a>0$, then 
\begin{equation*}
    0 \geq \langle \frac{u-2ax}{a},y-x\rangle - \Vert y-x\Vert^2 = \frac{1}{2}\left( \left\Vert \frac{u-2ax}{2a}\right\Vert^2 - \left\Vert y-x -\frac{u-2ax}{2a} \right\Vert^2\right),
\end{equation*}
or 
\begin{equation*}
    \left\Vert y-x -\frac{u-2ax}{2a} \right\Vert^2 \geq \left\Vert \frac{u-2ax}{2a}\right\Vert^2 \quad (\forall y\in C).
\end{equation*}
Thus, $x$ is the projection of $x+\frac{u-2ax}{2a}$ onto $C$ which means $u-2ax\in N_p (x,C)$ with $t=\frac{1}{2a}$. In both cases, $\nabla \phi (x) = u-2ax \in N_p (x,C)$.\\
Conversely, let $v\in N_p (x,C)$. There exists $t>0$ such that $x\in \Proj_C (x+tv)$ or
\begin{equation*}
    (\forall y\in C)\quad \Vert tv\Vert^2 \leq \Vert y-x - tv\Vert^2,
\end{equation*}
which can be expressed as
\begin{equation*}
    -\frac{1}{2t}\Vert y\Vert^2 +\langle v+\frac{x}{t}, y\rangle \leq -\frac{1}{2t}\Vert x\Vert^2 +\langle v+\frac{x}{t}, x\rangle,
\end{equation*}
which implies that $\phi(\cdot) = -\frac{1}{2t} \Vert \cdot \Vert^2 +\langle v+\frac{x}{t},\cdot\rangle\in \partial_{lsc}^\R  \iota_C (x)$ and $\nabla \phi (x) = v$.
\end{proof}

From the definition of $\lsc^\R$-subdifferentials, it is obvious that $\phi=\left(0,0\right)\in\partial_{lsc}^\R f\left(x_{0}\right)$, if and only if $x_{0}$ is a global minimizer of $f$ (c.f. \cite[Proposition 7.13]{Rub2013}).
In Example \ref{ex:1 J function}, the only case where $\left(0,0\right)\in\partial_{lsc}^\R  g_\gamma \left(x_{0}\right)$
is $x_{0}=0$. In the sequel, we use the concept of $a_{0}$-critical points as defined below.
\begin{definition}
\label{def: lsc stationary}
Let $f:X\to (-\infty,+\infty]$, a point $x_0\in X$ is $a_0$-critical point if $(a_0,2a_0 x_0) \in \partial_{lsc}^\R f(x_0)$, in other words
\begin{equation}
    \label{eq:lsc stationary}
    (\forall x\in X) \ f(x) -f(x_0) \geq -a_0 \Vert x-x_0\Vert^2
\end{equation}
\end{definition}
If $a_0 >0$, then $a_0$-criticality has been applied in \cite{davis2019,bednarczuk2023convergence}, to analyze algorithms involving the class of weakly convex functions.

If $a_0 \leq 0$, then \eqref{eq:lsc stationary} implies $x_0$ is the global minimizer of $f$.

\section{$\lsc^\R$-Proximal Operator}
\label{sec: prox operator}
Let $X$ be a Hilbert space. In this subsection, we introduce the proximal operator related to the class $\lsc^\R$-convex functions.
Observe that $\lsc^\R$-subdifferentials lie in the set $\lsc^\R$ which is not a subset of $X$. 
Therefore, we need a mapping which can serve as a link between $X$ and $\lsc^\R$. In analogy to the classical constructions we consider the function $g_\gamma$ from Example \ref{ex:1 J function}. 
\begin{definition}
\label{def: Jgamma duality map}
Let $\gamma>0$ and $g_\gamma(x) = \frac{1}{2\gamma}\Vert x\Vert^2$, we define \emph{$\lsc^\R$-duality map} $J_\gamma: X \rightrightarrows \lsc^\R$ as  
\begin{equation*}
    J_\gamma (x) := \partial_{lsc}^\R g_\gamma (x) =\partial_{lsc}^\R \left( \frac{1}{2\gamma}\Vert x \Vert^2 \right).
\end{equation*} 
Its inverse $J^{-1}_\gamma :\lsc^\R\rightrightarrows X$ is
\begin{equation*}
    J^{-1}_\gamma (\phi) = (\partial_{lsc}^\R g_\gamma)^{-1} (\phi).
\end{equation*} 
The explicit form of $J_\gamma$ and $J^{-1}_\gamma (\phi)$ are calculated in Example \ref{ex:1 J function}.
\end{definition}
When $\gamma=1$ then $J_1 =\partial_{lsc}^0 (\frac{1}{2}\Vert \cdot\Vert^2)$ in the sense of convex analysis subdifferentials, then we recover the classical duality mapping known in \cite[Example 2.26]{phelps2009convex}.

We consider the problem 
\begin{equation}
    \label{prob: min f}
    \min_{x\in X} f(x),
\end{equation}
where $f:X\to(-\infty,+\infty]$ which is proper $\lsc^\R$-convex.
Solving \eqref{prob: min f} means that we need to find $x_0 \in \dom f$ such that
\begin{equation}
\label{eq: optimality condition}
(0,0) \in\partial_{lsc}^\R f\left(x_{0}\right).
\end{equation}
By using Definition \ref{def: Jgamma duality map}, we define $\lsc^\R$-proximal operator $\prox^{\mathrm{lsc},\R}_{\gamma f}: X\rightrightarrows X$ (set-valued map)
\begin{equation}
\proxlsc_{\gamma f} (x) := \left( J_\gamma + \partial_{lsc}^\R f\right)^{-1} J_\gamma (x).
\label{eq:proximal-like operator 1}
\end{equation}
The concept of $\proxlsc_{\gamma f}$ is related to the concept of resolvent operator which is defined for classical convex subdifferentials as $(Id+\partial_{lsc}^0 f)^{-1}$. When $f$ is convex, this is also known as proximity operator
\begin{equation}
\label{eq: proximal operator}
\mathrm{prox}_{\gamma f}\left(x_{0}\right)=\arg\min_{z\in X}f\left(z\right)+\frac{1}{2\gamma}\left\Vert z-x_{0}\right\Vert ^{2}.
\end{equation}
\begin{remark}
    When reduced to convex analysis, the mapping $J_\gamma$ is actually the gradient of the norm square function which is single-valued and this implies the equivalence between optimality condition \eqref{eq: optimality condition} and the fixed point of the proximal operator \cite[Proposition 12.29]{Bau2011}. While in the weakly convex case, we have $\partial_{lsc}^a f(x)$ as the subdifferentials with fixed $a$. Then the fixed point of proximal operator is equivalent to $a$-critical point of $f$ \cite[Corollary 1]{bednarczuk2023calculus}.
\end{remark}

For the class of $\lsc^\R$-convex functions, we show that global minimizers of $f$ are related to fixed points of the $\lsc^\R$-proximal operator of $f$.

\begin{theorem}
\label{thm: fix point and optimal sol}
    Let $X$ be a Hilbert space. Let $f:X\to(-\infty,+\infty]$ be a proper $\lsc^\R$-convex function. If $x_0$ is a global minimizer of $f$ then $x_0$ is a fixed point of $\proxlsc_{\gamma f}$. 
    Conversely, if $x_0 \in \proxlsc_{\gamma f} (x_0)$ then $x_0$ is a critical point in the sense of Definition \ref{def: lsc stationary}. Additionally, if either 
    \begin{enumerate}
        \item $(-1/2\gamma,0) \in \partial_{lsc}^\R (f +\frac{1}{2\gamma}\Vert \cdot\Vert^2) (x_0)$, or 
        \item $\exists \phi_1,\phi_2 \in J_\gamma (x_0), \phi_1 -\phi_2 = (a_1-a_2,u_1-u_2) \in \partial_{lsc}^\R f(x_0), a_1 \leq a_2.$ 
    \end{enumerate}
Then $x_0$ is a global minimizer of $f$.
\end{theorem}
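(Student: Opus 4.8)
The plan is to work directly from the definitions of $J_\gamma$, $\partial_{lsc}^\R$, and $\proxlsc_{\gamma f}$, using the explicit formulas computed in Example~\ref{ex:1 J function}. For the first implication, suppose $x_0$ minimizes $f$. Then $(0,0)\in\partial_{lsc}^\R f(x_0)$ by the optimality characterization \eqref{eq: optimality condition}. Pick any $\phi\in J_\gamma(x_0)=\partial_{lsc}^\R g_\gamma(x_0)$; since $(0,0)\in\partial_{lsc}^\R f(x_0)$ we have $\phi = \phi + (0,0) \in J_\gamma(x_0) + \partial_{lsc}^\R f(x_0)$, hence $x_0 \in (J_\gamma + \partial_{lsc}^\R f)^{-1}(\phi)$ for this $\phi = J_\gamma(x_0)$-element, which is exactly the statement $x_0 \in \proxlsc_{\gamma f}(x_0)$. (Here one uses the additivity of $\lsc^\R$-subdifferentials in the trivial direction, $\partial_{lsc}^\R g_\gamma(x_0) + \partial_{lsc}^\R f(x_0) \subseteq \partial_{lsc}^\R(g_\gamma + f)(x_0)$, which is immediate from the defining inequality.)

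For the converse, suppose $x_0\in\proxlsc_{\gamma f}(x_0)$. Unwinding \eqref{eq:proximal-like operator 1}, this means there exists $\phi\in J_\gamma(x_0)$ with $\phi\in (J_\gamma+\partial_{lsc}^\R f)(x_0)$, i.e.\ $\phi = \phi_1 + \eta$ where $\phi_1\in J_\gamma(x_0)$ and $\eta\in\partial_{lsc}^\R f(x_0)$. Writing $\phi = \phi_2\in J_\gamma(x_0)$, we get $\eta = \phi_2 - \phi_1 \in \partial_{lsc}^\R f(x_0)$ where both $\phi_1,\phi_2\in J_\gamma(x_0)$. By the explicit form \eqref{eq: J_gamma form}, $\phi_i = (a_i, (\tfrac1\gamma + 2a_i)x_0)$ with $2\gamma a_i \geq -1$, so $\eta = (a_2 - a_1, 2(a_2-a_1)x_0)$. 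Setting $a_0 := a_2 - a_1$, this says precisely $(a_0, 2a_0 x_0)\in\partial_{lsc}^\R f(x_0)$, which is the definition of an $a_0$-critical point. Under hypothesis (2), we may choose $\phi_1,\phi_2$ so that $a_1\le a_2$ is replaced by $a_1\ge a_2$ — wait, the stated hypothesis is $a_1\le a_2$ in the difference $\phi_1-\phi_2$; reading it as $\eta = \phi_1 - \phi_2$ with $a_1 \le a_2$ gives $a_0 = a_1 - a_2 \le 0$, and then by the remark following Definition~\ref{def: lsc stationary}, $a_0 \le 0$ forces $x_0$ to be a global minimizer of $f$. Under hypothesis (1), $(-\tfrac1{2\gamma},0)\in\partial_{lsc}^\R(f + g_\gamma)(x_0)$ means $(f+g_\gamma)(y) - (f+g_\gamma)(x_0) \ge -\tfrac1{2\gamma}(\|y\|^2 - \|x_0\|^2)$ for all $y$; expanding $g_\gamma$ and cancelling shows $f(y) - f(x_0) \ge \tfrac1{\gamma}\langle x_0, y - x_0\rangle - \tfrac1{2\gamma}\|y - x_0\|^2 + (\text{terms that combine to } 0)$, and a short computation reduces this to $f(y)\ge f(x_0)$ for all $y$, i.e.\ $x_0$ is a global minimizer.

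The main obstacle is bookkeeping rather than a deep idea: one must be careful that the element $\phi = J_\gamma(x_0)$ appearing on the outer right-hand side of \eqref{eq:proximal-like operator 1} is quantified correctly (it ranges over the \emph{set} $J_\gamma(x_0)$, which is genuinely set-valued here), and that the decomposition $\phi = \phi_1 + \eta$ uses the same $x_0$ on both sides. The reason hypotheses (1) and (2) are needed — and why mere criticality is not enough — is that the $\lsc^\R$-subdifferential is set-valued even for smooth functions, so $a_0$-criticality with $a_0 > 0$ does not upgrade to global optimality without the extra sign control that (1) or (2) supplies. I would finish by remarking that when $f$ is convex and $\gamma = 1$, $J_1$ is single-valued, hypothesis (2) holds automatically with $a_1 = a_2 = 0$, and the theorem reduces to the classical equivalence between fixed points of $\prox_{f}$ and minimizers.
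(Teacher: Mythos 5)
Your proposal follows essentially the same route as the paper's proof: the forward direction via $(0,0)\in\partial_{lsc}^\R f(x_0)$, and the converse by writing the fixed-point inclusion as a difference of two elements of $J_\gamma(x_0)$ lying in $\partial_{lsc}^\R f(x_0)$, which gives $a_0$-criticality and, under (1) or (2), global minimality through the remark after Definition \ref{def: lsc stationary}. One sign needs fixing in your treatment of hypothesis (1): with the paper's convention $\phi(x)=-a\Vert x\Vert^2+\langle u,x\rangle +c$, the pair $(-\tfrac{1}{2\gamma},0)$ yields $(f+g_\gamma)(y)-(f+g_\gamma)(x_0)\geq +\tfrac{1}{2\gamma}\left(\Vert y\Vert^2-\Vert x_0\Vert^2\right)$, not $-\tfrac{1}{2\gamma}(\cdots)$ as you wrote, so the quadratic terms cancel exactly and $f(y)\geq f(x_0)$ is immediate; with your sign (and the garbled intermediate expression that follows it) the conclusion would not follow.
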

\begin{proof}
Let $x_0$ be a global minimizer of $f$. Then $x_0$ satisfies \eqref{eq: optimality condition}, as $(0,0) \in \partial_{lsc}^\R f(x_0)$, and
\begin{equation*}
    \phi \in \partial_{lsc}^\R f(x_0) +J_\gamma (x_0),
\end{equation*}
for any $\phi \in J_\gamma (x_0)$. This means that 
\begin{equation}
    \label{eq: fixed point of resolvent}
    x_0 \in \left( \partial_{lsc}^\R f +J_\gamma\right)^{-1} (\phi) \subset \left( \partial_{lsc}^\R f +J_\gamma\right)^{-1} J_\gamma (x_0).
\end{equation}    
Therefore, $x_0\in \proxlsc_{\gamma f} (x_0)$. 

On the other hand, let us assume that $x_0$ is a fixed point of $\proxlsc_{\gamma f}$. From \eqref{eq: fixed point of resolvent},
there exists $\phi_1\in J_\gamma (x_0)$ such that
\begin{equation}
    \label{eq: fixed point proximal to minimizer}
    \phi_1 \in \partial_{lsc}^\R f(x_0) +J_\gamma (x_0).
\end{equation}
If $\phi_1 = (-\frac{1}{2\gamma},0)$ then $x_0$ is a minimizer of $f$ as
\begin{equation*}
    (-\frac{1}{2\gamma},0) \in \partial_{lsc}^\R f(x_0) +J_\gamma (x_0) \subseteq \partial_{lsc}^\R (f +\frac{1}{2\gamma}\Vert \cdot\Vert^2) (x_0).
\end{equation*}
If $\phi_1 = (a_1, (\frac{1}{\gamma}+2a_1 )x_0), a_1 > - 1/2\gamma$, by \eqref{eq: fixed point proximal to minimizer}, there exist $\phi_2 \in J_\gamma (x_0)$ and $\phi_3 \in \partial_{lsc}^\R f(x_0)$ such that $\phi_1 = \phi_2 +\phi_3$ or
\begin{equation*}
    \phi_1 -\phi_2 =\phi_3 \in \partial_{lsc}^\R f(x_0)
\end{equation*}
As $\phi_2 \in J_\gamma (x_0), \phi_2 = (a_2, (\frac{1}{\gamma}+2a_2 )x_0)$ for $a_1, a_2$ such that $2\gamma a_2 \geq -1$.
By the definition of $\lsc^\R$-subdifferentials, for all $y\in X$
\begin{align}
    f(y) -f(x_0) &\geq (\phi_1 -\phi_2) (y) -(\phi_1 -\phi_2) (x_0) \nonumber\\
    &\geq -(a_1 -a_2) \left( \Vert y \Vert^2 -\Vert x_0 \Vert^2\right) +\langle  (\frac{1}{\gamma}+2a_1 )x_0 - (\frac{1}{\gamma}+2a_2 )x_0,y-x_0\rangle \nonumber\\
    & = -(a_1 -a_2) \left( \Vert y \Vert^2 -\Vert x_0 \Vert^2\right) +2(a_1 -a_2)\langle x_0, y-x_0\rangle \nonumber\\
    & = (a_2 - a_1) \Vert y-x_0 \Vert^2 \label{eq: fy-fx norm},
\end{align}
the right hand side is non-negative if $a_2\geq a_1$, which implies that $x_0$ is a minimizer of $f$. From \eqref{eq: fy-fx norm}, in general, we have $x_0$ is $(a_2-a_1)$-critical point of $f$. 
\end{proof}

The next result characterizes elements of $\lsc^\R$-proximal map.
\begin{theorem}
\label{thm: lsc prox equiv prox}
Let $X$ be a Hilbert space and $f:X\to (-\infty,+\infty]$ be a proper $\lsc^\R$-convex function. Let $x_0 \in \dom f, \gamma>0$. Then  
\begin{align}
    \label{eq: lsc prox equiv prox}
    x \in \proxlsc_{\gamma f} (x_0) &\Leftrightarrow \exists a_0 \geq -\frac{1}{2\gamma} \text{ s.t. } x\in \arg\min_{z \in X} \left[ f(z) +\left( \frac{1}{2\gamma} +a_0 \right) \Vert z-x_0\Vert^2 \right] 
\end{align}
\end{theorem}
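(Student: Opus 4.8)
The plan is to unfold the definition of $\proxlsc_{\gamma f}$, reduce the membership $x\in\proxlsc_{\gamma f}(x_0)$ to an $\lsc^\R$-subgradient inclusion, and match it with the optimality condition of the perturbed problem on the right-hand side of \eqref{eq: lsc prox equiv prox}.

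I would start from an elementary reformulation of the right-hand side. For any $\lambda\ge 0$, expanding $\lambda\|z-x_0\|^2=\lambda\|z\|^2-2\lambda\langle x_0,z\rangle+\lambda\|x_0\|^2$ shows that $x\in\arg\min_{z\in X}\bigl[f(z)+\lambda\|z-x_0\|^2\bigr]$ is equivalent to $f(y)-f(x)\ge -\lambda(\|y\|^2-\|x\|^2)+\langle 2\lambda x_0,y-x\rangle$ for all $y\in X$, which is exactly $(\lambda,2\lambda x_0)\in\partial_{lsc}^\R f(x)$. Writing $\lambda=\tfrac1{2\gamma}+a_0$, the right-hand side of \eqref{eq: lsc prox equiv prox} thus becomes: \emph{there exists $\lambda\ge 0$ with $(\lambda,2\lambda x_0)\in\partial_{lsc}^\R f(x)$}. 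On the other side, by Definition \ref{def: Jgamma duality map} and the definition of the inverse of a set-valued map, $x\in\proxlsc_{\gamma f}(x_0)$ means precisely that $\bigl(J_\gamma(x)+\partial_{lsc}^\R f(x)\bigr)\cap J_\gamma(x_0)\neq\emptyset$; inserting the explicit form of $J_\gamma$ from Example \ref{ex:1 J function}, this says that there exist $\alpha,\beta\ge-\tfrac1{2\gamma}$ with
\[
\chi:=\Bigl(\alpha-\beta,\ \bigl(\tfrac1\gamma+2\alpha\bigr)x_0-\bigl(\tfrac1\gamma+2\beta\bigr)x\Bigr)\in\partial_{lsc}^\R f(x),
\]
namely $\phi:=(\alpha,(\tfrac1\gamma+2\alpha)x_0)\in J_\gamma(x_0)$, $\psi:=(\beta,(\tfrac1\gamma+2\beta)x)\in J_\gamma(x)$ and $\phi=\psi+\chi$.

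For the implication "$\Rightarrow$" I would set $\lambda:=\alpha+\tfrac1{2\gamma}\ge 0$ and $s:=\tfrac1\gamma+2\beta\ge 0$, so that $\alpha-\beta=\lambda-\tfrac s2$ and the second coordinate of $\chi$ equals $2\lambda x_0-sx$. The subgradient inequality for $\chi$ then reads, for all $y\in X$,
\[
f(y)-f(x)\ \ge\ -\bigl(\lambda-\tfrac s2\bigr)\bigl(\|y\|^2-\|x\|^2\bigr)+\langle 2\lambda x_0-sx,\ y-x\rangle,
\]
and the point is that the right-hand side equals $-\lambda(\|y\|^2-\|x\|^2)+\langle 2\lambda x_0,y-x\rangle+\tfrac s2\|y-x\|^2$; discarding the nonnegative term $\tfrac s2\|y-x\|^2$ leaves exactly $(\lambda,2\lambda x_0)\in\partial_{lsc}^\R f(x)$ with $\lambda\ge 0$, so the first reformulation yields the right-hand side of \eqref{eq: lsc prox equiv prox} with $a_0=\lambda-\tfrac1{2\gamma}=\alpha$. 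For "$\Leftarrow$", given $\lambda=\tfrac1{2\gamma}+a_0\ge 0$ with $(\lambda,2\lambda x_0)\in\partial_{lsc}^\R f(x)$, I would pick the boundary element $\psi:=(-\tfrac1{2\gamma},0)$, which lies in $J_\gamma(x)$ by Example \ref{ex:1 J function}, and set $\phi:=\psi+(\lambda,2\lambda x_0)=(a_0,2\lambda x_0)$; since $\tfrac1\gamma+2a_0=2\lambda$ and $a_0\ge-\tfrac1{2\gamma}$, Example \ref{ex:1 J function} gives $\phi\in J_\gamma(x_0)$, hence $\phi\in\bigl(J_\gamma(x)+\partial_{lsc}^\R f(x)\bigr)\cap J_\gamma(x_0)$ and $x\in\proxlsc_{\gamma f}(x_0)$.

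The only genuinely non-mechanical step is the forward direction: one must hit on the substitution $\lambda=\alpha+\tfrac1{2\gamma}$, $s=\tfrac1\gamma+2\beta$ and notice that the discrepancy between the available subgradient inequality and the target one is precisely a single nonnegative square $\tfrac s2\|y-x\|^2$ that can simply be dropped — this is exactly the mechanism producing the extra freedom in $a_0$ beyond the classical value $\tfrac1{2\gamma}$. Everything else is bookkeeping with the explicit formulas for $J_\gamma$ from Example \ref{ex:1 J function}; one should also note in passing that $x\in\dom f$ is automatic (on the left because $\partial_{lsc}^\R f(x)\neq\emptyset$ forces it, on the right because $f$ is proper so the perturbed objective is not identically $+\infty$).
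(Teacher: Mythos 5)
Your proof is correct and takes essentially the same route as the paper: you unfold $\proxlsc_{\gamma f}$ via the explicit form of $J_\gamma$ from Example \ref{ex:1 J function}, perform the same algebra converting the subgradient inclusion into the perturbed arg-min condition (the nonnegative term $\tfrac{s}{2}\Vert y-x\Vert^2$ you discard is exactly what the paper absorbs by passing through $\partial_{lsc}^\R\bigl(f+\tfrac{1}{2\gamma}\Vert\cdot\Vert^2\bigr)(x)$ and its sum-rule identity with $(\partial_{lsc}^\R f+J_\gamma)(x)$), and use the same element $(-\tfrac{1}{2\gamma},0)\in J_\gamma(x)$ for the converse. No gaps; only the bookkeeping differs.
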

\begin{proof}
Let $x \in \proxlsc_{\gamma f} (x_0)$. By definition, there exists $\phi_0\in J_\gamma (x_0)$ such that
\[
x\in (J_\gamma+\partial_{lsc}^\R f)^{-1} \phi_0.
\]
By \eqref{eq: J_gamma form}, there exists $a_0 \geq -1/2\gamma$ such that 
$ \phi_0 :=\left(a_0,(\frac{1}{\gamma}+2a_0) x_0 \right) \in J_\gamma (x_0)$ which satisfies
\begin{equation*}
    \left(a_0,\left(\frac{1}{\gamma}+2a_0 \right) x_0 \right) \in \partial_{lsc}^\R f(x) +J_\gamma (x) \subseteq \partial_{lsc}^\R (f+ \frac{1}{2\gamma} \Vert \cdot\Vert^2 ) (x).
\end{equation*}
Hence, by the definition of $\lsc^\R$-subdifferential, for all $y\in X$
\begin{align}
    f(y) +\frac{1}{2\gamma}\Vert y\Vert^2 - f(x) -\frac{1}{2\gamma}\Vert x\Vert^2 
    & \geq -a_0 \left( \Vert y\Vert^2 - \Vert x\Vert^2 \right) + \left( \frac{1}{\gamma}+2 a_0 \right)\langle x_0 , y-x\rangle. \label{eq:thm lsc prox equiv prox-1}
\end{align}
After simplifying all the quadratic terms in \eqref{eq:thm lsc prox equiv prox-1}, we obtain
\begin{align}
    f(y) - f(x) & \geq -\left(\frac{1}{2\gamma} +a_0\right) \Vert y-x\Vert^2 - \left( \frac{1}{\gamma}+2 a_0 \right) \langle x,y-x\rangle + \left( \frac{1}{\gamma}+2 a_0 \right)\langle x_0 , y-x\rangle \nonumber\\
    & \geq -\left(\frac{1}{2\gamma} +a_0\right) \Vert y-x\Vert^2 + \left( \frac{1}{\gamma}+2 a_0 \right)\langle x_0 -x , y-x\rangle.
\end{align}
By writing the inner product in terms of the norms, we have
\begin{equation*}
    f(y) - f(x) \geq \left(\frac{1}{2\gamma} +a_0\right) \left[ \Vert x_0-x\Vert^2 -\Vert y-x_0\Vert^2 \right],
\end{equation*}
which results in
\begin{equation}
    \label{eq: thm proof prox a_0}
    x\in \arg\min_{y\in X} f(y)+ \left(\frac{1}{2\gamma}+a_0\right) \Vert y-x_0\Vert^2.
\end{equation}
Conversely, let us assume \eqref{eq: thm proof prox a_0}. Then for all $y\in X$
\begin{align}
     f(y) - f(x) &\geq \left(\frac{1}{2\gamma} +a_0\right) \left[ \Vert x_0-x\Vert^2 -\Vert y-x_0\Vert^2 \right] \nonumber\\
     & = - \left(\frac{1}{2\gamma} +a_0\right) \left( \Vert y\Vert^2 -\Vert x\Vert^2 \right) + \left(\frac{1}{\gamma} +2 a_0\right)\langle x_0, y-x \rangle.
\end{align}
This is equivalent to
\begin{equation*}
    \left(a_0, (\frac{1}{\gamma}+2a_0) x_0\right) \in \partial_{lsc}^\R \left( f+ \frac{1}{2\gamma}\Vert\cdot\Vert^2\right) (x).
\end{equation*}
To finish the proof, we need to prove
\begin{equation*}
    \partial_{lsc}^\R \left( f+ \frac{1}{2\gamma}\Vert\cdot\Vert^2\right) (x) = (\partial_{lsc}^\R f + J_\gamma)(x).
\end{equation*}
We only have to prove that 
\begin{equation*}
    \partial_{lsc}^\R \left( f+ \frac{1}{2\gamma}\Vert\cdot\Vert^2\right) (x) \subset (\partial_{lsc}^\R f + J_\gamma)(x).
\end{equation*}
Let us take $(a,u)\in\partial_{lsc}^\R \left( f+ \frac{1}{2\gamma}\Vert\cdot\Vert^2\right) (x)$, so $(a+\frac{1}{2\gamma},u)\in\partial_{lsc}^\R f(x)$ and $(-\frac{1}{2\gamma},0) \in J_\gamma (x)$ from Example \ref{ex:1 J function}. Hence, we conclude the proof.
\end{proof}
Assertion \eqref{eq: lsc prox equiv prox} shows that for every $\gamma>0$, the proximal operator $\prox_{\gamma f}$ from \eqref{eq: proximal operator} is an element of $\lsc^\R$-proximal operator of $f$.

\begin{example}
\label{ex:strongly convex function}
    We consider the function $f:\R\to \R, f(x) =\left|x\right|+x^{2}$
which is $2$-strongly convex. We calculate the elements of $\prox_{\gamma f}^{\mathrm{lsc},\R} (x)$ for any $x\in\R$. Firstly, we compute $\partial_{lsc}^\R f(x)$. 
\begin{itemize}
\item When $x=0$, we need to find $(a,u)\in\lsc^\R$ such that 
\begin{equation}
\label{eq:ex strconv x=0}
\left(\forall y\in\mathbb{R}\right)\qquad\left|y\right|+y^{2}\geq-ay^{2}+uy.
\end{equation}
We divide into three cases of $y\in\R$. When $y=0$, \eqref{eq:ex strconv x=0} holds for all $(a,u)\in\lsc^\R$.
When $y>0$, we have 
\[
\left(a+1\right)y+1-u\geq0
\]
for $a\geq-1$ and $u\leq1$.
When $y<0$, we have 
\[
\left(a+1\right)y-\left(1+u\right)\leq0,
\]
for $a\geq-1$ and $u\geq-1.$
Hence, $\partial_{lsc}^\R f\left(0\right)=\left\{ \left(a,u\right):a\geq-1,-1\leq u\leq1\right\} .$
\item When $x>0$, we have 
\begin{equation}
\label{eq:ex strconv x>0}
\left(\forall y\in\mathbb{R}\right)\qquad\left|y\right|+y^{2}-x-x^{2}\geq-a\left(y^{2}-x^{2}\right)+u\left(y-x\right).
\end{equation}
When $y\geq 0$, we simplify \eqref{eq:ex strconv x>0} 
\[
\left(a+1\right)\left(y^{2}-x^{2}\right)\geq\left(u-1\right)\left(y-x\right).
\]
If $a=-1$ then $u=1$, otherwise, we have 
\[
\left(a+1\right)\left[y-x-\frac{u-1-2\left(a+1\right)x}{2\left(a+1\right)}\right]^{2}\geq\frac{\left[u-1-2\left(a+1\right)x\right]^{2}}{4\left(a+1\right)},
\]
which implies $a>-1$ and $u=1+2\left(a+1\right)x$.
When $y<0$, we have 
\[
\left(a+1\right)\left(y^{2}-x^{2}\right)\geq\left(u+1\right)\left(y-x\right).
\]
Since $y-x\neq 0$, we can further have
\[
\left(a+1\right)\left(y+x\right)\leq\left(u+1\right).
\]
As $y<0$, we need $a\geq-1$ and $u\geq-1+\left(a+1\right)x$.
Hence, for $x>0$,
\[ \partial_{lsc}^\R f\left(x\right)=\left\{ \left(a,u\right):a\geq-1,u=1+2\left(a+1\right)x\right\} .\] 
\item When $x<0$, similar to the previous case, we obtain 
\[ \partial_{lsc}^\R f\left(x\right)=\left\{ \left(a,u\right):a\geq-1,u=-1+2\left(a+1\right)x\right\} .\]
\end{itemize}
In conclusion, 
\begin{equation}
\label{eq:ex strconv subgrad}
\partial_{lsc}^\R f\left(x\right)=\left\{ \left(a,u\right)\in\Phi_{lsc}^\R:\begin{cases}
a\geq-1,u=-1+2\left(a+1\right)x & \text{when }x<0\\
a\geq-1,u=1+2\left(a+1\right)x & \text{when }x>0\\
a\geq-1,-1\leq u\leq1 & \text{when }x=0
\end{cases}\right\} .
\end{equation}
Let us take $x_0\in X$, for a given $\left(a_{0},u_{0}\right)\in J_{\gamma}\left(x_{0}\right)$, $x \in\mathrm{prox}_{\gamma f}^{\mathrm{lsc},\R}\left(x_{0}\right)$
means there exists ${\left(a,u \right)\in J_{\gamma}\left(x \right)}$
such that 
\[
\left(a_{0}-a ,u_{0}-u\right)\in\partial_{lsc}^\R f\left(x \right).
\]
By \eqref{eq:ex strconv subgrad}, we consider the following cases
\begin{itemize}
\item If $x <0$ then $a $ has to satisfy $a_0-a\geq -1$ and
\begin{align*}
u_{0}-u & =-1+2\left(a_{0}-a +1\right)x .
\end{align*}
We substitute $u_{0}$ and $u$ from Example \ref{ex:1 J function}, and arrive at 
\begin{align*}
x & =\frac{\left(\frac{1}{\gamma}+2a_{0}\right)x_{0}+1}{\left(\frac{1}{\gamma}+2a_{0}+1\right)}.
\end{align*}
\item If $x >0$, we also have $a$ satisfies $a_{0}-a \geq-1$ and 
\[
x =\frac{\left(\frac{1}{\gamma}+2a_{0}\right)x_{0}-1}{\left(\frac{1}{\gamma}+2a_{0}+1\right)}.
\]
\item If $x =0$ then $1\leq u_0 -u \leq 1$ or
\[
-\frac{\gamma}{1+2\gamma a_0} \leq x_0\leq \frac{\gamma}{1+2\gamma a_0}.
\]
Notice that when $2\gamma a_0 =-1$ then $u_0 = 0$ which also satisfies the above inequality.
\end{itemize}
Hence, we have 
\[
\mathrm{prox}_{\gamma f}^{\mathrm{lsc},\R}\left(x_{0}\right)=\begin{cases}
\frac{\left(\frac{1}{\gamma}+2a_{0}\right)x_{0}+1}{\left(\frac{1}{\gamma}+2a_{0}+1\right)} & \text{when } \left( \frac{1}{\gamma}+2a_0\right) x_{0}<-1,\ 2\gamma a_{0}\geq -1\\
\frac{\left(\frac{1}{\gamma}+2a_{0}\right)x_{0}-1}{\left(\frac{1}{\gamma}+2a_{0}+1\right)} & \text{when } \left( \frac{1}{\gamma}+2a_0\right) x_{0}>1,\ 2\gamma a_{0}\geq -1\\
0 & \text{otherwise}
\end{cases}.
\]
\end{example}

\section{$\lsc^\R$-Proximal Point Algorithm}
\label{sec: lsc PPA}
\subsection{Auxiliary Convergence Results}
This subsection presents the auxiliary convergence results which will be used in convergence proofs of $\lsc^\R$-proximal point algorithm introduced in Section \ref{sec: lsc PPA} and $\lsc^\R$-forward-backward algorithm introduced in Section \ref{sec: FB algorithm}. These auxiliary results are based on Fej\'er and quasi-Fej\'er monotonicity of the iterate (cf. \cite[Chapter 5]{Bau2011}).
\begin{lemma}[Lemma 3.1, \cite{combettes2001quasi}]
\label{lem: monotone sequence notnorm}
Let $\chi \in (0,1]$, $(\alpha_n)_{n\in\N},(\beta_n)_{n\in\N},(\varepsilon_n)_{n\in\N}$ be non-negative sequences with ${\sum_{n\in\N} \varepsilon_n <+\infty}$ such that 
\begin{equation}
    \alpha_{n+1} \leq \chi \alpha_n -\beta_n +\varepsilon_n.
\end{equation}
Then
\begin{enumerate}[label=(\roman*)]
    \item $(\alpha_n)_{n\in\N}$ is bounded and converges.
    \item $(\beta_n)_{n\in\N}$ is summable.
    \item If $\chi< 1$ then $(\alpha_n)_{n\in\N}$ is summable.
\end{enumerate}
\end{lemma}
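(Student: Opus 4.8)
The plan is to introduce the shifted sequence $t_n := \alpha_n + \sum_{k=n}^{\infty}\varepsilon_k$, which is well defined and non-negative since $\sum_{n\in\N}\varepsilon_n<+\infty$. Because $\alpha_n\geq 0$ and $\chi\leq 1$ we have $\chi\alpha_n\leq\alpha_n$, so the hypothesis yields
\[
t_{n+1}=\alpha_{n+1}+\sum_{k=n+1}^{\infty}\varepsilon_k\leq \chi\alpha_n-\beta_n+\sum_{k=n}^{\infty}\varepsilon_k\leq t_n-\beta_n\leq t_n .
\]
Hence $(t_n)_{n\in\N}$ is non-increasing and bounded below by $0$, so it converges to some limit $t_\infty$. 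Since the tail sum $\sum_{k=n}^{\infty}\varepsilon_k\to 0$, it follows that $\alpha_n=t_n-\sum_{k=n}^{\infty}\varepsilon_k\to t_\infty$, and in particular $(\alpha_n)_{n\in\N}$ is bounded; this gives (i). Telescoping the inequality $t_{n+1}\leq t_n-\beta_n$ over $n=0,\dots,N$ gives $\sum_{n=0}^{N}\beta_n\leq t_0-t_{N+1}\leq t_0$ for every $N$, so $(\beta_n)_{n\in\N}$ is summable, which is (ii).

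For (iii), suppose $\chi<1$ and discard the non-negative term $-\beta_n$ to obtain $\alpha_{n+1}\leq\chi\alpha_n+\varepsilon_n$. Unrolling this recursion gives, for every $n\geq 1$,
\[
\alpha_n\leq\chi^{n}\alpha_0+\sum_{k=0}^{n-1}\chi^{\,n-1-k}\varepsilon_k .
\]
Summing over $n$, interchanging the two (non-negative) sums, and using the geometric identity $\sum_{m\geq 0}\chi^{m}=1/(1-\chi)$, one bounds $\sum_{n\in\N}\alpha_n$ by $\alpha_0/(1-\chi)+\bigl(\sum_{k\in\N}\varepsilon_k\bigr)/(1-\chi)<+\infty$, so $(\alpha_n)_{n\in\N}$ is summable.

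The argument is elementary throughout; the only points requiring a little care are choosing the surrogate $t_n$ so that the perturbations $\varepsilon_n$ get absorbed into a monotone quantity, and justifying the exchange of summation order in the last step, which is immediate since every term is non-negative. I do not expect any genuine obstacle. An alternative is to treat the cases $\chi=1$ and $\chi<1$ separately, but the shifted-sequence device handles (i) and (ii) uniformly and isolates the geometric-series computation as the only ingredient specific to (iii).
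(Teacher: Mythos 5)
Your proof is correct: the shifted sequence $t_n=\alpha_n+\sum_{k\geq n}\varepsilon_k$ is monotone and non-negative, which gives (i) and, by telescoping, (ii); the unrolled recursion with the geometric series gives (iii), and the interchange of sums is justified by non-negativity. Note that the paper does not prove this lemma at all --- it is quoted from Combettes' quasi-Fej\'er analysis \cite{combettes2001quasi} --- so there is no internal proof to compare with; your argument is the standard self-contained one for such perturbed recursions and fills that gap correctly.
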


\begin{theorem}
\label{thm: general convergence results}
Let $h:X\to (-\infty,+\infty]$ be a proper $\lsc^\R$-convex function. Let $\left( x_n\right)_{n\in\N}$ be a sequence in $\dom h$, $(\alpha_n)_{n\in\N},(\beta_n)_{n\in\N}$ be positive sequences on the real line. Assuming the following holds for all $n\in\N$
\begin{equation}
    \label{eq: general sequence xn}
    \alpha_{n+1} \Vert x^* -x_{n+1}\Vert^2 \leq \alpha_n \Vert x^*-x_n\Vert^2 - \beta_n \Vert x_n -x_{n+1} \Vert^2,
\end{equation}
for some $x^*\in S= \argmin \ h \neq \emptyset$. The following holds
\begin{enumerate}[label=(\roman*)]
    \item $\left(\alpha_{n} \left\Vert x^{*}-x_{n}\right\Vert ^{2}\right)_{n\in\N}$
    converges.
    \item $\left( \alpha_n \mathrm{dist}^{2}\left(x_{n},S\right) \right)_{n\in\N}$ is decreasing and converges.
    \item $\sum_{n\in\N} \beta_n \Vert x_n -x_{n+1}\Vert^2 <+\infty$. If $0< \beta \leq \beta_n$ for all $n\in\N$ then $\sum_{n\in\N} \Vert x_{n} -x_{n+1}\Vert^2 <+\infty$.
    \item If $\sum_{n\in\N} 1/\sqrt{\beta_n} <+\infty$ then $\sum_{n\in\N} \Vert x_n-x_{n+1}\Vert <+\infty $.
\end{enumerate}
If $\alpha_n$ is non-decreasing then 
\begin{enumerate}[resume,label=(\roman*)]
    \item $\left(\left\Vert x^{*}-x_{n}\right\Vert ^{2} \right)_{n\in\N}$
    converges.
    \item $\left(\mathrm{dist}^{2}\left(x_{n},S\right) \right)_{n\in\N}$ is decreasing and converges.
\end{enumerate}
\end{theorem}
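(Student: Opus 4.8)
The plan is to read \eqref{eq: general sequence xn} as a Fej\'er-type recursion and to plug it into Lemma~\ref{lem: monotone sequence notnorm}, after which the remaining items follow by elementary estimates. Fix a minimizer $x^*\in S$ for which \eqref{eq: general sequence xn} holds, and set $\widetilde\alpha_n := \alpha_n\Vert x^*-x_n\Vert^2 \ge 0$ and $\widetilde\beta_n := \beta_n\Vert x_n-x_{n+1}\Vert^2 \ge 0$. Then \eqref{eq: general sequence xn} reads $\widetilde\alpha_{n+1} \le \widetilde\alpha_n - \widetilde\beta_n$, so Lemma~\ref{lem: monotone sequence notnorm} applies with $\chi=1$ and $\varepsilon_n\equiv 0$: its item~(i) gives convergence of $(\alpha_n\Vert x^*-x_n\Vert^2)_{n\in\N}$, which is assertion~(i), and its item~(ii) gives $\sum_{n\in\N}\beta_n\Vert x_n-x_{n+1}\Vert^2<+\infty$, the first part of~(iii). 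If in addition $\beta_n\ge\beta>0$ for all $n$, then $\beta\sum_{n}\Vert x_n-x_{n+1}\Vert^2\le\sum_{n}\beta_n\Vert x_n-x_{n+1}\Vert^2<+\infty$, the second part of~(iii).

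For~(ii) I would discard the nonpositive term $-\beta_n\Vert x_n-x_{n+1}\Vert^2$ in \eqref{eq: general sequence xn}, leaving $\alpha_{n+1}\Vert x^*-x_{n+1}\Vert^2\le\alpha_n\Vert x^*-x_n\Vert^2$ at every $x^*\in S$; taking the infimum over $x^*\in S$ and using $\mathrm{dist}^2(x_n,S)=\inf_{x^*\in S}\Vert x^*-x_n\Vert^2$ yields $\alpha_{n+1}\mathrm{dist}^2(x_{n+1},S)\le\alpha_n\mathrm{dist}^2(x_n,S)$, so this sequence is nonincreasing and, being bounded below by $0$, converges.

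For~(iv): by~(iii) the nonnegative sequence $(\beta_n\Vert x_n-x_{n+1}\Vert^2)_{n\in\N}$ is summable, hence bounded, say by $M<+\infty$; therefore $\Vert x_n-x_{n+1}\Vert\le\sqrt{M}\,\beta_n^{-1/2}$ for all $n$, and summing gives $\sum_{n}\Vert x_n-x_{n+1}\Vert\le\sqrt{M}\sum_{n}\beta_n^{-1/2}<+\infty$ by the hypothesis $\sum_n 1/\sqrt{\beta_n}<+\infty$. Under the extra assumption that $(\alpha_n)_n$ is nondecreasing, divide $\alpha_{n+1}\Vert x^*-x_{n+1}\Vert^2\le\alpha_n\Vert x^*-x_n\Vert^2$ by $\alpha_n>0$ and use $\alpha_{n+1}/\alpha_n\ge1$ to get $\Vert x^*-x_{n+1}\Vert^2\le(\alpha_{n+1}/\alpha_n)\Vert x^*-x_{n+1}\Vert^2\le\Vert x^*-x_n\Vert^2$, so $(\Vert x^*-x_n\Vert^2)_n$ is nonincreasing and bounded below, hence convergent, which is~(v); running the same estimate for every $x^*\in S$ and taking the infimum over $S$ as in~(ii) gives~(vi).

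All the computations are routine, so there is no real obstacle; the only point deserving care is the role of the quantifier on $x^*$. Assertions~(i),~(iii),~(iv),~(v) use the recursion \eqref{eq: general sequence xn} only at a single minimizer, whereas the distance-to-$S$ statements~(ii) and~(vi) require it to hold simultaneously for all $x^*\in S$ (the case where $S$ is a singleton being automatic). In the algorithmic applications of the theorem the recursion is in fact established for each $x^*\in S$, so invoking it in that stronger form there is harmless.
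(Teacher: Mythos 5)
Your proof is correct and follows essentially the same route as the paper: the same reduction to Lemma~\ref{lem: monotone sequence notnorm} with $\chi=1$, $\varepsilon_n=0$ for (i) and (iii), dropping the nonpositive term and taking the infimum over $S$ for (ii) and (vi), and dividing the Fej\'er inequality by a ratio of consecutive $\alpha$'s for (v); your bound $\Vert x_n-x_{n+1}\Vert\le\sqrt{M}\,\beta_n^{-1/2}$ in (iv) is a harmless variant of the paper's estimate $\Vert x_n-x_{n+1}\Vert\le\sqrt{\alpha_0/\beta_n}\,\Vert x^*-x_0\Vert$. Your closing remark that (ii) and (vi) implicitly require \eqref{eq: general sequence xn} to hold at every $x^*\in S$ is apt, since the paper's own proof uses that stronger form tacitly when taking the infimum.
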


\begin{proof}
We can consider $\alpha_n \Vert x^*-x_n\Vert^2$ as $\tilde{\alpha}_n$ and $\beta_n \Vert x_n -x_{n+1} \Vert^2$ as $\tilde{\beta}_n$ and apply Lemma \ref{lem: monotone sequence notnorm} with $\varepsilon_n =0 , \chi =1$ to arrive at (i) and by Lemma \ref{lem: monotone sequence notnorm}-(ii)
\begin{equation*}
    \sum_{n\in\N} \beta_n \Vert x_n -x_{n+1}\Vert^2 <+\infty.
\end{equation*}
It follows from \eqref{eq: general sequence xn} for all $n\in\N$, we have
\begin{equation}
\label{eq: general Fejer sequence}
    \alpha_{n+1} \Vert x^* -x_{n+1}\Vert^2 \leq \alpha_n \Vert x^*-x_n\Vert^2.
\end{equation}
As $\alpha_n$ does not depend on $x^* \in S$ for all $n\in\N$, (ii) holds by taking the infimum with respect to $x^* \in S$ on both sides of \eqref{eq: general Fejer sequence}.

From \eqref{eq: general sequence xn}, we know that
\begin{equation*}
    \alpha_{n+1}\Vert x^* -x_{n+1}\Vert^2  +\beta \Vert x_n -x_{n+1} \Vert^2 
    \leq \alpha_{n+1}\Vert x^* -x_{n+1}\Vert^2  +\beta_n \Vert x_n -x_{n+1} \Vert^2 \leq  \alpha_n\Vert x^*-x_n\Vert^2,
\end{equation*}
which is
\begin{equation*}
     \beta \Vert x_n -x_{n+1} \Vert^2 
    \leq  \alpha_n\Vert x^*-x_n\Vert^2 - \alpha_{n+1}\Vert x^* -x_{n+1}\Vert^2.
\end{equation*}
By summing the above inequality from $n=0 $ to $N\in\N$ we get
\begin{equation*}
     \beta \sum_{n=0}^N \Vert x_n -x_{n+1} \Vert^2 
    \leq  \alpha_0\Vert x^*-x_0\Vert^2 - \alpha_{N+1}\Vert x^* -x_{N+1}\Vert^2 \leq \alpha_0\Vert x^*-x_0\Vert^2,
\end{equation*}
and letting $N$ go to infinity, we have (iii).  
To show (iv), we infer from \eqref{eq: general sequence xn},
\begin{equation}
    \Vert x_n -x_{n+1} \Vert \leq \sqrt{\frac{\alpha_n}{\beta_n}} \Vert x^* -x_{n}\Vert \leq \sqrt{\frac{\alpha_{n-1}}{\beta_n}} \Vert x^* -x_{n-1}\Vert \leq \sqrt{\frac{\alpha_0}{\beta_n}} \Vert x^* -x_0\Vert.
\end{equation}
Thanks to the assumption that $\sum_{n\in\N} \frac{1}{\sqrt{\beta_n}} <+\infty$, $\Vert x_n -x_{n+1}\Vert$ is summable. 

Let us assume that $\alpha_n$ is non-decreasing, since $\alpha_n >0$, we can divide both sides of \eqref{eq: general Fejer sequence} by $\alpha_{n+1}$ to obtain
\begin{equation}
    \Vert x^* -x_{n+1}\Vert^2 \leq \frac{\alpha_n}{\alpha_{n+1}} \Vert x^*-x_n\Vert^2 \leq \Vert x^*-x_n\Vert^2.
\end{equation}
This proves (v) and also (vi). 


\end{proof}

\subsection{$\lsc^\R$-Proximal Point Algorithm}

Let us assume that the function $f:X\to (\infty,+\infty]$ is proper $\lsc^\R$-convex, 
for all $x\in \dom f$. Let us further assume $f$ has a global minimizer i.e. the set $S=\argmin_{x\in X} f(x)$ is non-empty. 
The $\lsc^\R$-proximal point algorithm \eqref{eq:prox alg}, starting with $x_0\in \dom f$ and stepsize $\gamma>0$, is as follows
\begin{align}
x_{n+1} & \in \proxlsc_{\gamma f} (x_n) =\left( J_\gamma +\partial_{lsc}^\R f \right)^{-1}J_\gamma \left(x_{n}\right).\label{eq:prox alg}\tag{$\lsc^\R$-PPA}
\end{align}
According to \eqref{eq:proximal-like operator 1}, the following conditions must be satisfied for the $\proxlsc_{\gamma f}$ to be well-defined,
\begin{align}
(\forall n\in \N) \ J_\gamma\left(x_{n}\right)\cap \mathrm{ran }\left(J_\gamma +\partial_{lsc}^\R f\right) \neq \emptyset. \label{eq: prox-lsc well-defined}
\end{align}
If for all $n\in\mathbb{N}$, there exist $\phi_n=(a,u_n) \in J_\gamma (x_n)$ and $\phi_{n+1} =(a,u_{n+1}) \in J_\gamma (x_{n+1})$ for $2\gamma a \geq -1$ such that
\begin{equation}
    \label{eq: same constant subgrad}
    \phi_n-\phi_{n+1} = (0,u_n-u_{n+1}) = \left(0,\left(\frac{1}{\gamma} +2a\right) x_n- \left(\frac{1}{\gamma} +2a \right)x_{n+1}\right) \in \partial_{lsc}^\R f(x_{n+1}),
\end{equation}
(where the second equality come from Example \ref{ex:1 J function})
then we have
\begin{equation*}
    \left(\frac{1}{\gamma}+2a\right) (x_n-x_{n+1}) \in \partial_{lsc}^0 f(x_{n+1}),
\end{equation*}
where $\partial_{lsc}^0 f$ is a subgradient of $f$ in the sense of convex analysis. This is proximal point update in convex analysis with $(1/\gamma+2a)$ as the stepsize.

Combining with Theorem \ref{thm: lsc prox equiv prox}, we can run \eqref{eq:prox alg} as in algorithm \ref{alg:PPA-example}.

\begin{algorithm}[H]\caption{$\lsc^\R$-Proximal Point Algorithm}\label{alg:PPA-example}
\begin{enumerate}
    \item \textbf{Initialize:} $\gamma >0$, $x_0 \in \dom f$ and $\left( a_0, (\frac{1}{\gamma}+2a_0)x_0\right) \in J_\gamma (x_0)$
    \item \textbf{For $n>0$, update}
    \begin{itemize}
        \item Pick $\left( a_n, (\frac{1}{\gamma}+2a_n)x_n\right) \in J_\gamma (x_n)$
        \item $x_{n+1} \in \argmind{z\in X}{ f(z) +\left( \frac{1}{2\gamma} +a_n\right) \Vert z-x_n\Vert^2}$
    \end{itemize}
\end{enumerate}
\end{algorithm}
\vspace{0.5cm}

We present the result related to monotonicity of the objective function generated by \eqref{eq:prox alg}.

\begin{proposition}
\label{prop:prox f norm}
Let $f:X\to (-\infty,+\infty]$ be a proper, $\lsc^\R$-convex function, and \eqref{eq: prox-lsc well-defined} hold. Let $\left( x_n\right)_{n\in\N}$ and $(a_n)_{n\in\N}$ be sequences generated by \eqref{eq:prox alg} with $\gamma>0$.
Then it holds for all $y\in X$ that
\begin{equation}
    f(y) -f(x_{n+1}) \geq \left(\frac{1}{2\gamma}+a_{n+1}\right)\left\Vert y-x_{n+1}\right\Vert ^{2}+\left(\frac{1}{2\gamma}+a_{n}\right)\left( \left\Vert x_{n+1}-x_{n}\right\Vert ^{2} - \left\Vert y-x_{n}\right\Vert ^{2}\right).\label{eq:prox alg f norm}
\end{equation}
In particular, $(f(x_n))_{n\in\N}$ is decreasing.
\end{proposition}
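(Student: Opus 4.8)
The plan is to unfold the definition of $\proxlsc_{\gamma f}$ along the iteration, reduce everything to the defining inequality of the $\lsc^\R$-subdifferential, and then carry out one short algebraic identity. First I would record what membership in the $\lsc^\R$-proximal operator gives. Since $x_{n+1}\in\proxlsc_{\gamma f}(x_n)=(J_\gamma+\partial_{lsc}^\R f)^{-1}J_\gamma(x_n)$ and \eqref{eq: prox-lsc well-defined} holds, the element $\phi_n:=\bigl(a_n,(\tfrac1\gamma+2a_n)x_n\bigr)\in J_\gamma(x_n)$ selected by the algorithm lies in $(J_\gamma+\partial_{lsc}^\R f)(x_{n+1})=J_\gamma(x_{n+1})+\partial_{lsc}^\R f(x_{n+1})$. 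By the explicit description of $J_\gamma$ in Example \ref{ex:1 J function}, the $J_\gamma(x_{n+1})$-summand in a chosen decomposition of $\phi_n$ has the form $\bigl(a_{n+1},(\tfrac1\gamma+2a_{n+1})x_{n+1}\bigr)$ with $2\gamma a_{n+1}\ge-1$, and this is precisely the $a_{n+1}$ used by the algorithm at the next step; consequently
\[
\phi_n-\phi_{n+1}=\Bigl(a_n-a_{n+1},\ \bigl(\tfrac1\gamma+2a_n\bigr)x_n-\bigl(\tfrac1\gamma+2a_{n+1}\bigr)x_{n+1}\Bigr)\in\partial_{lsc}^\R f(x_{n+1}).
\]

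Next I would apply Definition \ref{def:Phi-subdiff} to $\phi_n-\phi_{n+1}$ at $x_{n+1}$: for every $y\in X$,
\[
f(y)-f(x_{n+1})\ge(\phi_n-\phi_{n+1})(y)-(\phi_n-\phi_{n+1})(x_{n+1}),
\]
where the constant term drops out. It then remains to identify the right-hand side with that of \eqref{eq:prox alg f norm}. Setting $\mu:=\tfrac{1}{2\gamma}+a_n$ and $\nu:=\tfrac{1}{2\gamma}+a_{n+1}$, the quadratic coefficient of $\phi_n-\phi_{n+1}$ equals $\mu-\nu$ and its linear part equals $2\mu x_n-2\nu x_{n+1}$; expanding $(\phi_n-\phi_{n+1})(y)-(\phi_n-\phi_{n+1})(x_{n+1})$, grouping the $\mu$- and $\nu$-terms separately, and using the elementary expansions $\|x_{n+1}-x_n\|^2-\|y-x_n\|^2=\|x_{n+1}\|^2-\|y\|^2+2\langle x_n,y-x_{n+1}\rangle$ and $\|y-x_{n+1}\|^2=\|y\|^2+\|x_{n+1}\|^2-2\langle x_{n+1},y\rangle$, one obtains
\[
(\phi_n-\phi_{n+1})(y)-(\phi_n-\phi_{n+1})(x_{n+1})=\nu\,\|y-x_{n+1}\|^2+\mu\bigl(\|x_{n+1}-x_n\|^2-\|y-x_n\|^2\bigr),
\]
which is exactly \eqref{eq:prox alg f norm}.

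For the monotonicity statement I would specialize \eqref{eq:prox alg f norm} to $y=x_n$, so that $\|y-x_n\|^2=0$ and $\|y-x_{n+1}\|^2=\|x_n-x_{n+1}\|^2$, giving
\[
f(x_n)-f(x_{n+1})\ge\Bigl(\tfrac1\gamma+a_n+a_{n+1}\Bigr)\|x_n-x_{n+1}\|^2\ge0,
\]
since $2\gamma a_n\ge-1$ and $2\gamma a_{n+1}\ge-1$ force both $\tfrac{1}{2\gamma}+a_n$ and $\tfrac{1}{2\gamma}+a_{n+1}$ to be nonnegative; hence $(f(x_n))_{n\in\N}$ is nonincreasing.

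There is no genuine analytic obstacle here; the one point that needs care is the bookkeeping in the first step, namely ensuring that the $J_\gamma(x_{n+1})$-component in the decomposition of $\phi_n$ is the same $a_{n+1}$ the algorithm uses afterwards, since it is this identification that produces the extra term $\bigl(\tfrac{1}{2\gamma}+a_{n+1}\bigr)\|y-x_{n+1}\|^2$ in \eqref{eq:prox alg f norm}. The remainder is the routine quadratic algebra of expanding $(\phi_n-\phi_{n+1})(y)-(\phi_n-\phi_{n+1})(x_{n+1})$.
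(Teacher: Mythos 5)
Your proposal is correct and follows essentially the same route as the paper: extract the inclusion $\phi_n-\phi_{n+1}\in\partial_{lsc}^\R f(x_{n+1})$ from the iteration, apply the defining inequality of the $\lsc^\R$-subdifferential, and complete the square to arrive at \eqref{eq:prox alg f norm}, with monotonicity obtained by setting $y=x_n$ and using $2\gamma a_n\ge -1$, $2\gamma a_{n+1}\ge -1$. The only difference is presentational ($\mu,\nu$ bookkeeping in one identity versus the paper's two-step simplification), so no further changes are needed.
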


\begin{proof}
By \eqref{eq: prox-lsc well-defined}, the sequence $(x_n)_{n\in\N}$ is well-defined, i.e. there exist $\left(a_{n},\left(\frac{1}{\gamma}+2a_{n}\right)x_{n}\right)\in J_\gamma \left(x_{n}\right)$
and $\left(a_{n+1},\left(\frac{1}{\gamma}+2a_{n+1}\right)x_{n+1}\right)\in J_\gamma \left(x_{n+1}\right)$ such that
\begin{align}
\left(a_{n},\left(\frac{1}{\gamma}+2a_{n}\right)x_{n}\right)- \left(a_{n+1},\left(\frac{1}{\gamma}+2a_{n+1}\right)x_{n+1}\right) & \in\partial_{lsc}^\R f\left(x_{n+1}\right),\label{eq:prox alg subdiff}
\end{align}
with the conditions $2\gamma a_n \geq -1$ and $2\gamma a_{n+1} \geq -1$.
By definition of $\Phi_{lsc}^\R$-subgradient of $f$ at $x_{n+1}$, we have
\begin{align}
\left(\forall y\in X\right)\quad f\left(y\right)-f\left(x_{n+1}\right) & \geq-\left(a_{n}-a_{n+1}\right)\left(\left\Vert y\right\Vert ^{2}-\left\Vert x_{n+1}\right\Vert ^{2}\right) \nonumber\\
& +\left\langle \frac{1}{\gamma}\left(x_{n}-x_{n+1}\right)+2\left(a_{n}x_{n}-a_{n+1}x_{n+1}\right),y-x_{n+1}\right\rangle \nonumber \\
 & =\frac{1}{\gamma}\left\langle x_{n}-x_{n+1},y-x_{n+1}\right\rangle +2\left\langle a_{n}x_{n}-a_{n+1}x_{n+1},y-x_{n+1}\right\rangle  \nonumber\\ 
 &-\left(a_{n}-a_{n+1}\right)\left(\left\Vert y\right\Vert ^{2}-\left\Vert x_{n+1}\right\Vert ^{2}\right)\label{eq:prox alg estimation f}
\end{align}
The last two terms on the right hand side can be simplified to
\begin{align}
& 2\left\langle a_{n}x_{n}-a_{n+1}x_{n+1},y-x_{n+1}\right\rangle -\left(a_{n}-a_{n+1}\right)\left(\left\Vert y\right\Vert ^{2}-\left\Vert x_{n+1}\right\Vert ^{2}\right) \nonumber\\
& = 2a_{n}\left\langle x_{n}-x_{n+1},y-x_{n+1}\right\rangle +\left(a_{n+1}-a_{n}\right)\left\Vert y-x_{n+1}\right\Vert ^{2}. \label{eq: lem eq 1}
\end{align}
Plugging \eqref{eq: lem eq 1} back into \eqref{eq:prox alg estimation f} we obtain
\begin{align}
f(y)-f(x_{n+1}) & \geq
\left(\frac{1}{\gamma}+2a_{n}\right)\left\langle x_{n}-x_{n+1},y-x_{n+1}\right\rangle +\left(a_{n+1}-a_{n}\right)\left\Vert y-x_{n+1}\right\Vert ^{2}\nonumber \\
 & =\left(\frac{1}{2\gamma}+a_{n}+a_{n+1}-a_{n}\right)\left\Vert y-x_{n+1}\right\Vert ^{2}+\left(\frac{1}{2\gamma}+a_{n}\right)\left\Vert x_{n+1}-x_{n}\right\Vert ^{2} \nonumber\\ 
 &-\left(\frac{1}{2\gamma}+a_{n}\right)\left\Vert y-x_{n}\right\Vert ^{2}\nonumber \\
 & =\left(\frac{1}{2\gamma}+a_{n+1}\right)\left\Vert y-x_{n+1}\right\Vert ^{2}-\left(\frac{1}{2\gamma}+a_{n}\right)\left\Vert y-x_{n}\right\Vert ^{2} \nonumber\\
 &+\left(\frac{1}{2\gamma}+a_{n}\right)\left\Vert x_{n+1}-x_{n}\right\Vert ^{2},
\end{align}
which proves \eqref{eq:prox alg f norm}.
By taking $y=x_{n}$, we obtain
\[
f\left(x_{n}\right)-f\left(x_{n+1}\right)\geq\left(\frac{1}{\gamma}+a_{n+1}+a_{n}\right)\left\Vert x_{n}-x_{n+1}\right\Vert ^{2}\geq0,
\]
as $\frac{1}{\gamma}+a_{n+1}+a_{n} \geq 0$ from the conditions of $a_n$ and $a_{n+1}$.
Therefore, $(f\left(x_{n}\right))_{n\in\N}$ is decreasing.
\end{proof}
Proposition \ref{prop:prox f norm} provides a description of the behavior of the objective function at each iterate. In the following result, the role of coefficients $a_n$ in the convergence results is investigated. Depending on the subdifferentials of $f$, the behavior of $(a_n)_{n\in\N}$ can be divided into two cases. Below, we present the main convergence result of \eqref{eq:prox alg}.

\begin{theorem}
\label{thm:Fejer prox pt}
Let $f:X\to (-\infty,+\infty]$ be a proper $\lsc^\R$-convex function with the set $S=\argmin_{x\in X} f(x)$ nonempty. Assuming  \eqref{eq: prox-lsc well-defined} holds, $\left( x_n\right)_{n\in\N}, (a_n)_{n\in\N}$ are sequences generated by \eqref{eq:prox alg} with $\gamma>0$,
then the following hold.
\begin{enumerate}[label=(\roman*)]
    \item If there exists $n_0\in\N$ such that $\frac{1}{2\gamma}+a_{n_0} =0$, $x_{n_0+1}$ is a global minimizer of $f$. 
    \item If $\frac{1}{2\gamma}+a_n >0$ for all $n\in\N$, we have $\lim_{n\to\infty} f(x_n) = f(x^*)$, where $x^*\in S$. The assertions (i)-(iv) of Theorem \ref{thm: general convergence results} hold with $\alpha_n= \beta_n = \frac{1}{2\gamma}+a_n$, with $n\in\N$. Moreover, if $(a_n)_{n\in\N}$ is non-decreasing, then we also have (v)-(vi) of Theorem \ref{thm: general convergence results} i.e. 
    \begin{enumerate}
    \item[(v)] $\left(\left\Vert x^{*}-x_{n}\right\Vert ^{2} \right)_{n\in\N}$
    converges.
    \item[(vi)] $\left(\mathrm{dist}^{2}\left(x_{n},S\right) \right)_{n\in\N}$ is decreasing and converges.
\end{enumerate}
\end{enumerate}

\end{theorem}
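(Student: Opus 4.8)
The plan is to derive both parts from Proposition~\ref{prop:prox f norm}: its one-step estimate \eqref{eq:prox alg f norm} already carries all the information, and the Fej\'er-type recursion it produces will be fed into Theorem~\ref{thm: general convergence results}. I will use throughout that a selection from $J_\gamma(x_n)$ forces $2\gamma a_n\geq -1$ (Example~\ref{ex:1 J function}), so $\frac{1}{2\gamma}+a_n\geq 0$ for every $n$.

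For part (i), I would specialize \eqref{eq:prox alg f norm} to $n=n_0$. Since $\frac{1}{2\gamma}+a_{n_0}=0$, the whole bracketed summand disappears and the inequality collapses to $f(y)-f(x_{n_0+1})\geq\bigl(\frac{1}{2\gamma}+a_{n_0+1}\bigr)\Vert y-x_{n_0+1}\Vert^{2}\geq 0$ for all $y\in X$, using $\frac{1}{2\gamma}+a_{n_0+1}\geq 0$. Hence $x_{n_0+1}$ is a global minimizer. Equivalently, Theorem~\ref{thm: lsc prox equiv prox} with $a_0=a_{n_0}$ turns the proximal subproblem into $\argmin_z f(z)$.

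For part (ii), set $\alpha_n:=\frac{1}{2\gamma}+a_n>0$ and fix $x^*\in S$. Plugging $y=x^*$ into \eqref{eq:prox alg f norm} and using $f(x^*)=\min f\leq f(x_{n+1})$ makes the left-hand side $\leq 0$; rearranging yields
\[
\alpha_{n+1}\Vert x^*-x_{n+1}\Vert^{2}\ \leq\ \alpha_{n}\Vert x^*-x_{n}\Vert^{2}-\alpha_{n}\Vert x_{n}-x_{n+1}\Vert^{2},
\]
which is exactly hypothesis \eqref{eq: general sequence xn} of Theorem~\ref{thm: general convergence results} with $h=f$ and $\alpha_n=\beta_n=\frac{1}{2\gamma}+a_n$. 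Invoking that theorem gives assertions (i)--(iv), and (v)--(vi) whenever $(a_n)$, hence $(\alpha_n)$, is non-decreasing.

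It then remains to show $f(x_n)\to f(x^*)$. Proposition~\ref{prop:prox f norm} already gives that $(f(x_n))_{n\in\N}$ is decreasing, and it is bounded below by $f(x^*)$, so it converges. To pin down the limit I would retain all terms when setting $y=x^*$ in \eqref{eq:prox alg f norm}, obtaining $0\leq f(x_{n+1})-f(x^*)\leq\alpha_{n}\Vert x^*-x_{n}\Vert^{2}-\alpha_{n+1}\Vert x^*-x_{n+1}\Vert^{2}$; since $\bigl(\alpha_n\Vert x^*-x_n\Vert^{2}\bigr)_{n\in\N}$ converges by Theorem~\ref{thm: general convergence results}(i), the right-hand side tends to $0$, forcing $f(x_n)\to f(x^*)$. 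I do not anticipate a genuine obstacle: the argument is essentially the observation that the second summand of \eqref{eq:prox alg f norm} telescopes with the correct weights. The only subtlety is that, without monotonicity of $(a_n)$, one controls only the weighted quantity $\alpha_n\Vert x^*-x_n\Vert^{2}$ rather than $\Vert x^*-x_n\Vert^{2}$ itself --- but this is precisely what the limit computation needs.
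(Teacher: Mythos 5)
Your proposal is correct and follows essentially the same route as the paper: part (i) from the one-step estimate \eqref{eq:prox alg f norm} with the vanishing coefficient $\frac{1}{2\gamma}+a_{n_0}=0$, and part (ii) by setting $y=x^*$ to obtain exactly recursion \eqref{eq: general sequence xn} with $\alpha_n=\beta_n=\frac{1}{2\gamma}+a_n$, then invoking Theorem \ref{thm: general convergence results} and squeezing $f(x_{n+1})-f(x^*)$ between $0$ and the difference of the convergent weighted distances. No gaps.
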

\begin{proof}
Thanks to assumption \eqref{eq: prox-lsc well-defined}, $(x_n)_{n\in\N}$ is well-defined. 
In case \textit{(i)}, there exists an $n_0\in\N$ such that $\frac{1}{2\gamma}+a_{n_0} = 0$, then at iteration $n_0 +1$, by Proposition \ref{prop:prox f norm},
\begin{equation}
   (\forall y\in X) \qquad f(y) -f(x_{n_0+1}) \geq \left(\frac{1}{2\gamma}+a_{n_0+1}\right)\left\Vert y-x_{n_0+1}\right\Vert ^{2} \geq 0.
\end{equation}
This means $x_{n_0+1}$ is the global minimizer, so we can stop the algorithm after $n_0+1$ steps no matter if $(\frac{1}{2\gamma}+a_{n_0+1})$ equals to zero or not.

In case \textit{(ii)}, from inequality \eqref{eq:prox alg f norm} in Proposition \ref{prop:prox f norm}, by taking $y=x^{*}\in S$, we obtain
\begin{align}
0\geq f\left(x^{*}\right)-f\left(x_{n+1}\right) & \geq\left(\frac{1}{2\gamma}+a_{n+1}\right)\left\Vert x^{*}-x_{n+1}\right\Vert ^{2}-\left(\frac{1}{2\gamma}+a_{n}\right)\left\Vert x^{*}-x_{n}\right\Vert ^{2} \nonumber\\
& +\left(\frac{1}{2\gamma}+a_{n}\right)\left\Vert x_{n+1}-x_{n}\right\Vert ^{2}. \label{eq: thm proof norm square}
\end{align}
which coincides with Theorem \ref{thm: general convergence results} inequality \eqref{eq: general sequence xn} for $\alpha_n = \beta_n = \frac{1}{2\gamma} +a_n > 0$ for all $n\in\N$. Notice that the monotonicity of $\alpha_n$ depends on the monotonicity of $a_n$. When $(a_n)_{n\in\N}$ is non-decreasing, Theorem \ref{thm: general convergence results}-{(v,vi)} hold for the function $h=f$.

For the limit of the objective function, using the second inequality of \eqref{eq: thm proof norm square} and Theorem \ref{thm: general convergence results}-(i), we can skip the last term to arrive at
\begin{align}
f\left(x^{*}\right)-f\left(x_{n+1}\right) & \geq\left(\frac{1}{2\gamma}+a_{n+1}\right)\left\Vert x^{*}-x_{n+1}\right\Vert ^{2}-\left(\frac{1}{2\gamma}+a_{n}\right)\left\Vert x^{*}-x_{n}\right\Vert ^{2}. 
\end{align}
Taking the limit on both sides, by Theorem \ref{thm: general convergence results}-(i), the right hand side converges to zero while the left hand side is non-positive, which implies that 
\begin{equation*}
    \lim_{n\to\infty} f(x_{n+1}) = f(x^*).
\end{equation*} 
\end{proof}

\begin{remark}
The $\lsc^\R$-subdifferentials of $f$ influent the behavior of $(a_n)_{n\in\N}$ in \eqref{eq:prox alg}. If for any $(a,u)\in\partial_{lsc}^\R f (x)$, $a> 0$ for all $x\in \dom f$, then by \eqref{eq:prox alg subdiff}, at each iteration, we must have $a_n -a_{n+1} > 0$ so $(a_n)_{n\in\N}$ is decreasing and consequently $\alpha_n$ is decreasing.
Then the assertions (v,vi,vii) of Theorem \ref{thm:Fejer prox pt} will not be met. For example, the function $f(x)=-x^2$ has $\partial_{lsc}^\R  f(x) = \{(a,u)\in\lsc^\R: a\geq 1, u=2(a-1)x\}$ for all $x\in\R$.


\end{remark}
If $\alpha_n$ is bounded from below by a positive constant in \eqref{eq:prox alg}, then Theorem \ref{thm: general convergence results}-(iii) implies $\Vert x_n -x_{n+1}\Vert^2$ is summable and $(x_n)_{n\in\N}$ is bounded.

To see that $(x_n)_{n\in\N}$ is bounded, let us fix $x^*\in S$ and let $A\leq \alpha_n$ for all $n\in\N$ for some constant $A>0$. By contradiction, assume that $(x_n)_{n\in\N}$ is unbounded i.e. there exists a subsequence $(x_{n_k})_{k\in\N}$ such that
\begin{equation*}
    A k^2 \leq \alpha_{n_k} \Vert x_{n_k}-x^*\Vert^2, \forall k\in\N.
\end{equation*}
By Theorem \ref{thm: general convergence results}-(i), $(\alpha_n \Vert x_n-x^*\Vert^2)_{n\in\N}$ converges. Let $\delta = \lim_{n\to \infty} \alpha_n \Vert x_n -x^*\Vert^2$, then
\begin{equation*}
    |A k^2 -\delta| \leq |\alpha_{n_k} \Vert x_{n_k}-x^*\Vert^2-\delta |, \ \forall k\in\N.
\end{equation*}
The left hand side tends to infinity while the right hand side goes to zero which leads to a contradiction.

Since $x_{n}$ is bounded, there exists a weakly convergent subsequence of $(x_n)_{n\in\N}$.
However, we do not know if a weak limit point of $(x_n)_{n\in\N}$ lies in $S$. This is because we only know that the function $f$ is lsc, thanks to $\lsc^\R$-convexity which does not mean that $f$ is weak lsc as, in general, it is not convex.


\begin{remark}
    Theorem \ref{thm:Fejer prox pt}-(i) provides a stopping criterion for \eqref{eq:prox alg}. As stated in Theorem \ref{thm: fix point and optimal sol}, a minimizer is also a fixed point of the $\lsc^\R$-proximal operator. However, as the $\lsc^\R$-proximal operator is a set-valued operator, it can return a fixed point and not a solution (see Theorem \ref{thm: fix point and optimal sol}).
\end{remark}

\section{$\lsc^\R$-Forward Backward Algorithm}
\label{sec: FB algorithm}
Inspired by the construction of $\lsc^\R$-proximal operator in \eqref{eq:proximal-like operator 1}, now we go a step further by considering the problem
\begin{equation*}
    \min_{x\in X} f(x) +g(x),
\end{equation*}
which can be approached by finding a point $x_0\in X$ such that
\begin{equation}
\label{eq: 0 in f+g}
(0,0)\in\left(\partial_{lsc}^\R f+\partial_{lsc}^\R g\right)\left(x_{0}\right) \subseteq \partial_{lsc}^\R  (f+g)(x_0),
\end{equation}
where the functions $f,g:X\to (-\infty,+\infty]$ are proper $\lsc^\R$-convex and $\dom f\cap \dom g \neq \emptyset$. Moreover, let us assume that the set $S=\argmin_{x\in X} (f+g)(x) $ is non-empty. 

For any $x_0\in\dom f\cap \dom g$, we define the following update 
\begin{equation}
x_{n+1}\in\left(J_\gamma +\partial_{lsc}^\R f\right)^{-1}\left(J_\gamma -\partial_{lsc}^\R g\right)\left(x_{n}\right).\label{eq:forward-backward-operator 1}\tag{$\lsc^\R$-FB}
\end{equation}
To ensure that the iterate in \eqref{eq:forward-backward-operator 1} is well-defined, we assume the following condition 
\begin{align}
(\forall n\in \N) \ \left(J_\gamma-\partial_{lsc}^\R g\right) (x_n) \cap \mathrm{ran }\left(J_\gamma +\partial_{lsc}^\R f\right) \neq \emptyset.
\label{eq:well-defined FB}
\end{align}
We refer to algorithm \eqref{eq:forward-backward-operator 1} as $\lsc^\R$-Forward-Backward Algorithm. 

Similar to Theorem \ref{thm: lsc prox equiv prox}, one can interpret \eqref{eq:forward-backward-operator 1} as follow.
\begin{theorem}
\label{thm: lsc FB equiv FB}
Let $X$ be a Hilbert space and $f,g:X\to (-\infty,+\infty]$ be proper $\lsc^\R$-convex functions. Let $x_0 \in \dom f, \gamma>0$. If 
\begin{align}
    \label{eq: lsc FB equiv FB}
    x \in\left(J_\gamma +\partial_{lsc}^\R f\right)^{-1}\left(J_\gamma -\partial_{lsc}^\R g\right)\left(x_{0}\right),
\end{align}
then there exists $(a_0,(1/\gamma+2a_0)x_0) \in J_\gamma (x_0)$ and $(a_0^g,u_0^g)\in\partial_{lsc}^\R g(x_0)$ such that
\[
x \in \argmind{y\in X}{ f(y) +\langle u_0^g-2a_0^g x_0,y\rangle} +\left( \frac{1}{2\gamma} +a_0-a_0^g\right)\Vert y-x_0\Vert^2.
\]
\end{theorem}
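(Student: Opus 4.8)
The plan is to unfold the definition \eqref{eq: lsc FB equiv FB} and reduce it to a subdifferential inclusion for a suitable auxiliary function, mimicking the proof of Theorem \ref{thm: lsc prox equiv prox}. First I would observe that $x \in\left(J_\gamma +\partial_{lsc}^\R f\right)^{-1}\left(J_\gamma -\partial_{lsc}^\R g\right)\left(x_{0}\right)$ means there exist $\phi_0 \in J_\gamma(x_0)$ and $\psi_0 \in \partial_{lsc}^\R g(x_0)$ with $\phi_0 - \psi_0 \in \left(J_\gamma + \partial_{lsc}^\R f\right)(x)$. By Example \ref{ex:1 J function}, write $\phi_0 = (a_0, (\tfrac1\gamma + 2a_0)x_0)$ for some $a_0 \geq -\tfrac{1}{2\gamma}$, and write $\psi_0 = (a_0^g, u_0^g) \in \partial_{lsc}^\R g(x_0)$. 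Then $\phi_0 - \psi_0 = (a_0 - a_0^g,\ (\tfrac1\gamma + 2a_0)x_0 - u_0^g)$, and the inclusion says this element lies in $\partial_{lsc}^\R f(x) + J_\gamma(x) \subseteq \partial_{lsc}^\R\!\left(f + \tfrac{1}{2\gamma}\Vert\cdot\Vert^2\right)(x)$.

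Next I would write out the $\lsc^\R$-subgradient inequality: for all $y \in X$,
\begin{align*}
f(y) + \tfrac{1}{2\gamma}\Vert y\Vert^2 - f(x) - \tfrac{1}{2\gamma}\Vert x\Vert^2 &\geq -(a_0 - a_0^g)\left(\Vert y\Vert^2 - \Vert x\Vert^2\right) \\
&\quad + \left\langle \left(\tfrac1\gamma + 2a_0\right)x_0 - u_0^g,\ y - x\right\rangle.
\end{align*}
Then I would cancel the $\tfrac{1}{2\gamma}\Vert\cdot\Vert^2$ terms against part of the right-hand side and collect the remaining quadratic terms, exactly as in the passage from \eqref{eq:thm lsc prox equiv prox-1} onward in the proof of Theorem \ref{thm: lsc prox equiv prox}. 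The bookkeeping is: the coefficient of $\Vert y - x\Vert^2$ on the right becomes $-\left(\tfrac{1}{2\gamma} + a_0 - a_0^g\right)$, the linear terms involving $x$ recombine with the $\left(\tfrac1\gamma + 2a_0\right)x_0$ term, and the $u_0^g$ term is carried along as a linear functional. Completing the square in $y - x_0$ (using $\langle x_0 - x, y - x\rangle = \tfrac12(\Vert x_0 - x\Vert^2 + \Vert y - x\Vert^2 - \Vert y - x_0\Vert^2)$) should yield
\[
f(y) + \langle u_0^g - 2a_0^g x_0, y\rangle + \left(\tfrac{1}{2\gamma} + a_0 - a_0^g\right)\Vert y - x_0\Vert^2 \ \geq\ f(x) + \langle u_0^g - 2a_0^g x_0, x\rangle + \left(\tfrac{1}{2\gamma} + a_0 - a_0^g\right)\Vert x - x_0\Vert^2,
\]
which is precisely the claimed $\argmin$ characterization.

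The main obstacle is purely the algebraic reorganization: one must track the linear term $\langle u_0^g - 2a_0^g x_0, \cdot\rangle$ (which comes from rewriting $\psi_0(y) - \psi_0(x) = -a_0^g(\Vert y\Vert^2 - \Vert x\Vert^2) + \langle u_0^g, y - x\rangle$ and moving the quadratic piece $-a_0^g\Vert\cdot\Vert^2$ into the $\Vert\cdot - x_0\Vert^2$ term) simultaneously with the $J_\gamma$ contributions, and make sure every stray $\Vert x\Vert^2$, $\langle x, x_0\rangle$, and $\Vert x_0\Vert^2$ cancels correctly. Unlike Theorem \ref{thm: lsc prox equiv prox}, here I only need the forward implication, so I do not have to prove the reverse inclusion $\partial_{lsc}^\R(f + \tfrac{1}{2\gamma}\Vert\cdot\Vert^2)(x) \subseteq (\partial_{lsc}^\R f + J_\gamma)(x)$; the chain $\partial_{lsc}^\R f(x) + J_\gamma(x) \subseteq \partial_{lsc}^\R(f + \tfrac{1}{2\gamma}\Vert\cdot\Vert^2)(x)$ suffices. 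I would also note at the start that $a_0 - a_0^g \geq -\tfrac{1}{2\gamma}$ need not hold in general, so the coefficient $\tfrac{1}{2\gamma} + a_0 - a_0^g$ can be negative — but the statement only asserts membership in the $\argmin$ set via the subgradient inequality, so no sign condition is needed for the conclusion as stated.
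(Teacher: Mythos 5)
Your proposal is correct and follows essentially the same route as the paper: unfold the inclusion into $\phi_0-\psi_0\in\partial_{lsc}^\R f(x)+J_\gamma(x)$, apply the $\lsc^\R$-subgradient inequality, and complete the square in $y-x_0$ to get the claimed $\argmin$ characterization. The only (cosmetic) difference is that you discard the nonnegative slack by passing through $\partial_{lsc}^\R\bigl(f+\tfrac{1}{2\gamma}\Vert\cdot\Vert^2\bigr)(x)$ as in Theorem \ref{thm: lsc prox equiv prox}, whereas the paper keeps the element $\bigl(a,(\tfrac1\gamma+2a)x\bigr)\in J_\gamma(x)$ explicit and drops the term $\bigl(\tfrac{1}{2\gamma}+a\bigr)\Vert y-x\Vert^2\geq 0$ at the end; these are the same estimate.
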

\begin{proof}
Let \eqref{eq: lsc FB equiv FB} hold, it means there exist
\[
\left( a_0,(\frac{1}{\gamma}+2a_0 )x_0\right) \in J_\gamma (x_0),\ \left( a,(\frac{1}{\gamma}+2a )x\right) \in J_\gamma (x), \ (a_0^g,u_0^g) \in \partial_{lsc}^\R g(x_0),
\]
such that
\[
\left( a_0,(\frac{1}{\gamma}+2a_0 )x_0\right) - \left( a,(\frac{1}{\gamma}+2a )x\right) - (a_0^g,u_0^g) \in \partial_{lsc}^\R f(x).
\]
Using $\lsc^\R$-subdifferentials gives us, for all $y\in X$,
\begin{align*}
    f(y)-f(x) & \geq -\left( a_0 -a -a_0^g\right)(\Vert y\Vert^2 - \Vert x \Vert^2) +\langle (\frac{1}{\gamma}+2a_0 )x_0 - (\frac{1}{\gamma}+2a )x -u_0^g ,y-x \rangle\\
    & = -\left( a_0 -a -a_0^g\right)\Vert y-x \Vert^2 + \langle (\frac{1}{\gamma}+2a_0 )(x_0 - x ) -u_0^g +2a_0^g x ,y-x \rangle \\
    & = \left( \frac{1}{2\gamma}+a +a_0^g\right)\Vert y-x \Vert^2 - \left( \frac{1}{2\gamma}+ a_0 \right)\Vert y-x_{0}\Vert^2 +\left(\frac{1}{2\gamma} +a_0 \right)\Vert x_0 -x \Vert^2\\
    & + \langle 2a_0^g x_0 -u_0^g ,y-x \rangle +2a_0^g \langle  x- x_0,y-x \rangle \\
    & = \left( \frac{1}{2\gamma}+a \right)\Vert y-x \Vert^2 - \left( \frac{1}{2\gamma}+ a_0 -a_0^g\right)\Vert y-x_{0}\Vert^2 +\left(\frac{1}{2\gamma} +a_0 -a_0^g\right)\Vert x_0-x \Vert^2\\
    & -\langle u_0^g-2a_0^g x_0,y-x \rangle.
\end{align*}
As $\left( a,(\frac{1}{\gamma}+2a )x\right) \in J_\gamma (x)$, we have $2\gamma a\geq -1$ which infers
\begin{align*}
    f(y)-f(x) & \geq - \left( \frac{1}{2\gamma}+ a_0 -a_0^g\right)\Vert y-x_{0}\Vert^2 +\left(\frac{1}{2\gamma} +a_0 -a_0^g\right)\Vert x_0-x \Vert^2 -\langle u_0^g -2a_0^g x_0,y-x \rangle.
\end{align*}
or
\[
x \in \argmind{y\in X}{ f(y)+\langle u_0^g -2a_0^g x_0,y \rangle  +\left(\frac{1}{2\gamma} +a_0 -a_0^g\right)\Vert y-x_{0}\Vert^2}.
\]
\end{proof}

From Theorem \ref{thm: lsc FB equiv FB}, we propose algorithm \ref{alg:FB-example} which is one way to implement \eqref{eq:forward-backward-operator 1}.
\begin{algorithm}[H]\caption{$\lsc^\R$-Forward-Backward Algorithm}\label{alg:FB-example}
\begin{enumerate}
    \item \textbf{Initialize:} $\gamma >0$, $x_0 \in \dom f$ and $\left( a_0, (\frac{1}{\gamma}+2a_0)x_0\right) \in J_\gamma (x_0), (a_0^g,u_0^g) \in \partial_{lsc}^\R g(x_0)$
    \item \textbf{For $n>0$, update}
    \begin{itemize}
        \item Pick $\left( a_n, (\frac{1}{\gamma}+2a_n)x_n\right) \in J_\gamma (x_n), (a_n^g,u_n^g) \in \partial_{lsc}^\R g(x_n)$
        \item $x_{n+1} \in \argmind{z\in X}{ f(z) + \langle u_n^g -2a_n^g x_n,z\rangle +\left( \frac{1}{2\gamma} +a_n-a_n^g\right) \Vert z-x_n\Vert^2}$
    \end{itemize}
\end{enumerate}
\end{algorithm}
\vspace{0.5cm}

We start with the following technical fact

\begin{proposition}
    Let $g:X\to (-\infty,+\infty]$ be proper $\lsc^\R$-convex. If $\left(J_\gamma -\partial_{lsc}^\R g\right) \subseteq \dom J_\gamma^{-1}$, then 
\begin{equation}
    \left(J_\gamma +\partial_{lsc}^\R f\right)^{-1}\left(J_\gamma -\partial_{lsc}^\R g\right) \subset \left(J_\gamma +\partial_{lsc}^\R f\right)^{-1} J_\gamma J_\gamma^{-1}\left(J_\gamma -\partial_{lsc}^\R g\right) = \proxlsc_{\gamma f} (J_\gamma^{-1}\left(J_\gamma -\partial_{lsc}^\R g\right)).
    \label{eq: FB in prox term}
\end{equation}
\end{proposition}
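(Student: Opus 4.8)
The plan is to read both sides as compositions of set-valued maps $X \rightrightarrows X$ (identified with their graphs), and to establish the equality and the inclusion in the display separately. The equality is a bookkeeping statement about the definition of $\proxlsc_{\gamma f}$, while the inclusion is the elementary set-valued identity $S \subseteq S\,S^{-1}S$ applied to $J_\gamma$ on the range of $J_\gamma - \partial_{lsc}^\R g$, composed on the left with $(J_\gamma + \partial_{lsc}^\R f)^{-1}$; the role of the hypothesis is exactly to force that range to lie in $\dom J_\gamma^{-1}$.

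For the equality, I would simply unfold \eqref{eq:proximal-like operator 1}, which reads $\proxlsc_{\gamma f} = (J_\gamma + \partial_{lsc}^\R f)^{-1} J_\gamma$ as a composition. Applying this to the set $J_\gamma^{-1}(J_\gamma - \partial_{lsc}^\R g)(x_0)$ for an arbitrary $x_0 \in X$, and using that composition of set-valued maps is taken pointwise (the image of a set is the union of the images of its points), gives
\[
\proxlsc_{\gamma f}\bigl(J_\gamma^{-1}(J_\gamma - \partial_{lsc}^\R g)(x_0)\bigr) = (J_\gamma + \partial_{lsc}^\R f)^{-1} J_\gamma J_\gamma^{-1}(J_\gamma - \partial_{lsc}^\R g)(x_0),
\]
which is the claimed identity once $x_0$ ranges over $X$. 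Nothing about $f$ or $g$ beyond the definitions is used here.

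For the inclusion, I would fix $x_0 \in X$ and $x \in (J_\gamma + \partial_{lsc}^\R f)^{-1}(J_\gamma - \partial_{lsc}^\R g)(x_0)$. By definition of composition there is $\phi \in (J_\gamma - \partial_{lsc}^\R g)(x_0)$ with $x \in (J_\gamma + \partial_{lsc}^\R f)^{-1}(\phi)$. The hypothesis $(J_\gamma - \partial_{lsc}^\R g) \subseteq \dom J_\gamma^{-1}$ gives $\phi \in \dom J_\gamma^{-1}$, hence $J_\gamma^{-1}(\phi) \neq \emptyset$, and \eqref{eq: identity of J_gamma} (which, since $J_\gamma = \partial_{lsc}^\R g_\gamma$, is precisely $\phi \in J_\gamma(J_\gamma^{-1}(\phi))$ for $\phi \in \dom J_\gamma^{-1}$) yields $\phi \in J_\gamma J_\gamma^{-1}(\phi)$. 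Because $J_\gamma^{-1}(\phi) \subseteq J_\gamma^{-1}\bigl((J_\gamma - \partial_{lsc}^\R g)(x_0)\bigr)$, taking images under $J_\gamma$ gives $\phi \in J_\gamma J_\gamma^{-1}(J_\gamma - \partial_{lsc}^\R g)(x_0)$, and therefore $x \in (J_\gamma + \partial_{lsc}^\R f)^{-1}J_\gamma J_\gamma^{-1}(J_\gamma - \partial_{lsc}^\R g)(x_0)$. Since $x_0$ and $x$ were arbitrary, the inclusion, and hence the whole statement, follows.

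The hard part, such as it is, is notational rather than analytic: one must commit to reading $(J_\gamma - \partial_{lsc}^\R g) \subseteq \dom J_\gamma^{-1}$ as ``the range of $J_\gamma - \partial_{lsc}^\R g$ is contained in $\dom J_\gamma^{-1}$'', and keep the chain of inclusions consistent with the convention $A(\emptyset) = \emptyset$, so that the relation holds trivially at those $x_0$ for which $(J_\gamma - \partial_{lsc}^\R g)(x_0) = \emptyset$. With those conventions fixed there is no further obstacle.
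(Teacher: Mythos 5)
Your proposal is correct and follows essentially the same route as the paper: both arguments take $\phi\in\left(J_\gamma-\partial_{lsc}^\R g\right)(x_0)$, use the hypothesis to place $\phi$ in $\dom J_\gamma^{-1}$, invoke the identity $\phi\in J_\gamma J_\gamma^{-1}(\phi)$ from \eqref{eq: identity of J_gamma}, and then compose with $\left(J_\gamma+\partial_{lsc}^\R f\right)^{-1}$, the equality with $\proxlsc_{\gamma f}$ being just the definition \eqref{eq:proximal-like operator 1}. Your added care about the pointwise reading of compositions and the convention $A(\emptyset)=\emptyset$ is a harmless refinement of the paper's argument, not a different method.
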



\begin{proof}
Let $x\in \dom f\cap \dom g$ take $\phi\in \left(J_\gamma -\partial_{lsc}^\R g\right) (x)$. Then by assumption, $\phi \in \dom J_\gamma^{-1}$ which implies $\phi \in J_\gamma J_\gamma^{-1} (\phi)$ and
\[
\phi \in J_\gamma J_\gamma^{-1} \left(J_\gamma -\partial_{lsc}^\R g\right) (x).
\]
Then it is obvious that 
\[
\left(J_\gamma +\partial_{lsc}^\R f\right)^{-1}\left(J_\gamma -\partial_{lsc}^\R g\right) \subset \left(J_\gamma +\partial_{lsc}^\R f\right)^{-1} J_\gamma J_\gamma^{-1}\left(J_\gamma -\partial_{lsc}^\R g\right).
\]
\end{proof}

Let us consider the function $g$ to be Fr{\'e}chet differentiable on the whole domain with Lipschitz continuous gradient with Lipschitz constant $L_g>0$. 
We have an interesting relationship between $\lsc^\R$-subdifferentials and gradient of $g$.
\begin{lemma}
\label{lem: lsc subdiff and grad}
Let $g:X\to (-\infty,+\infty]$ be Fr{\'e}chet-differentiable on $X$ with Lipschitz continuous gradient with Lipschitz constant $L_g>0$. Then $\partial_{lsc}^\R  g(x) \neq\emptyset$ for all $x\in X$. Moreover, for $x\in X$, any $(a,u)\in\partial_{lsc}^\R  g(x)$ satisfies $a+\frac{L_g}{2} \geq 0$ and $u-2ax = \nabla g(x)$.
\end{lemma}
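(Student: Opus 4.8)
The plan is to first exploit the descent lemma for functions with $L_g$-Lipschitz gradient to produce an explicit element of $\partial_{lsc}^\R g(x)$, thereby proving nonemptiness, and then to show that the structure of \emph{every} element of $\partial_{lsc}^\R g(x)$ is forced by differentiability. For the first part, fix $x\in X$. The standard descent lemma gives, for all $y\in X$,
\[
g(y) \le g(x) + \langle \nabla g(x), y-x\rangle + \frac{L_g}{2}\|y-x\|^2,
\]
and hence $g(y) \ge g(x) + \langle \nabla g(x), y-x\rangle - \frac{L_g}{2}\|y-x\|^2$ when applied appropriately — but actually what I want is a \emph{minorant}, so I would instead argue that the function $y\mapsto g(x) + \langle \nabla g(x), y-x\rangle - \frac{L_g}{2}\|y-x\|^2$ lies below $g$. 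This is \emph{not} automatic from the descent lemma alone; I would obtain it from the fact that for an $L_g$-Lipschitz gradient one has $g(y) - g(x) - \langle \nabla g(x), y-x\rangle \ge -\frac{L_g}{2}\|y-x\|^2$, which follows by applying the descent lemma to the pair $(x,y)$ with roles reversed together with a symmetrization, or directly via the integral form $g(y)-g(x)-\langle\nabla g(x),y-x\rangle = \int_0^1 \langle \nabla g(x+t(y-x)) - \nabla g(x), y-x\rangle\,dt$ and bounding the integrand below by $-L_g t\|y-x\|^2$. Rewriting $-\frac{L_g}{2}\|y-x\|^2 = -\frac{L_g}{2}\|y\|^2 + L_g\langle x,y\rangle - \frac{L_g}{2}\|x\|^2$, I collect terms to exhibit $\phi = (a,u)$ with $a = \tfrac{L_g}{2}$ and $u = \nabla g(x) + L_g x = \nabla g(x) + 2ax$; checking the subgradient inequality \eqref{eq: def subdiff} for this $\phi$ then shows $\phi\in\partial_{lsc}^\R g(x)$, so the set is nonempty and the claimed identity $u - 2ax = \nabla g(x)$ holds for this particular element with $a + \frac{L_g}{2} = L_g \ge 0$.

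For the second part, let $(a,u)\in\partial_{lsc}^\R g(x)$ be arbitrary. By Definition \ref{def:Phi-subdiff}, for all $y\in X$,
\[
g(y) - g(x) \ge -a\big(\|y\|^2 - \|x\|^2\big) + \langle u, y-x\rangle = -a\|y-x\|^2 + \langle u - 2ax, y-x\rangle.
\]
Since $g$ is Fréchet differentiable, I would plug in $y = x + t h$ for a fixed direction $h\in X$ and $t\to 0$, divide by $t$, and pass to the limit from both $t>0$ and $t<0$: the quadratic term $-a t^2\|h\|^2$ contributes $o(t)$, so the first-order comparison forces $\langle \nabla g(x), h\rangle \ge \langle u-2ax, h\rangle$ and $\langle \nabla g(x), h\rangle \le \langle u-2ax, h\rangle$, i.e. $\langle \nabla g(x) - (u-2ax), h\rangle = 0$ for every $h$, hence $u - 2ax = \nabla g(x)$. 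Substituting this back into the displayed inequality yields $g(y) - g(x) - \langle \nabla g(x), y-x\rangle \ge -a\|y-x\|^2$ for all $y$; combining with the upper bound from the descent lemma, $g(y) - g(x) - \langle \nabla g(x), y-x\rangle \le \frac{L_g}{2}\|y-x\|^2$, and evaluating along a sequence where the difference quotient $\big(g(y)-g(x)-\langle\nabla g(x),y-x\rangle\big)/\|y-x\|^2$ approaches $-\tfrac{L_g}{2}$ — which is achievable since the Lipschitz bound is tight in the worst direction — gives $-a \le \frac{L_g}{2}$, i.e. $a + \frac{L_g}{2} \ge 0$. If one wants to avoid appealing to tightness of the constant, the weaker bound $a \le$ (anything) is not needed for nonemptiness; but for the stated inequality I would instead note that $-a\|y-x\|^2 \le g(y)-g(x)-\langle\nabla g(x),y-x\rangle$ must hold for \emph{all} $y$, and if $a < -\frac{L_g}{2}$ were possible the left side would eventually dominate, contradicting Lipschitz continuity of $\nabla g$ along any ray — made precise by the second-order-type estimate above.

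The main obstacle is the lower bound $g(y) - g(x) - \langle\nabla g(x), y-x\rangle \ge -\frac{L_g}{2}\|y-x\|^2$: it is the mirror image of the descent lemma and, while standard, needs the symmetric consequence of $L_g$-Lipschitz continuity of the gradient (equivalently, that $g + \frac{L_g}{2}\|\cdot\|^2$ is convex), so I would state it as a cited fact (e.g. from \cite{Bau2011}) rather than reprove it. The differentiability argument pinning down $u - 2ax = \nabla g(x)$ is then routine, and the constraint $a + \frac{L_g}{2}\ge 0$ drops out of comparing the two-sided quadratic bounds.
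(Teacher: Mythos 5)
Your proposal is correct, but it takes a partly different route from the paper's. For nonemptiness the paper does not construct anything: it cites external results guaranteeing that a function with $L_g$-Lipschitz gradient is $\lsc^{\geq}$-subdifferentiable at every point and then uses $\lsc^{\geq}\subset\lsc^\R$; you instead exhibit the explicit element $(a,u)=(\tfrac{L_g}{2},\nabla g(x)+L_g x)$ via the lower counterpart of the descent lemma, which makes this part self-contained at the cost of proving the mirror bound yourself. For an arbitrary $(a,u)\in\partial_{lsc}^\R g(x)$, the paper combines the subgradient inequality with the descent lemma in one step and observes that $x$ must be a global minimizer of the quadratic $h(z)=\left(\tfrac{L_g}{2}+a\right)\Vert z\Vert^2+\langle\nabla g(x)-L_g x-u,z\rangle$, which simultaneously forces $\tfrac{L_g}{2}+a\geq 0$ and, via $\nabla h(x)=0$, the identity $u-2ax=\nabla g(x)$. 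You instead derive $u-2ax=\nabla g(x)$ by a purely local first-order argument (plug $y=x+th$, divide by $t$, let $t\to 0^{\pm}$), which uses only Fr\'echet differentiability and not the Lipschitz constant, and then get $a+\tfrac{L_g}{2}\geq 0$ by chaining the resulting bound $-a\Vert y-x\Vert^2\leq g(y)-g(x)-\langle\nabla g(x),y-x\rangle$ with the descent-lemma upper bound. One caveat: your first suggestion, appealing to ``tightness of the Lipschitz bound in the worst direction,'' is neither true in general (take $g$ affine, where the quotient is identically zero) nor needed; the chained two-sided inequality $-a\Vert y-x\Vert^2\leq\tfrac{L_g}{2}\Vert y-x\Vert^2$ for a single $y\neq x$ already yields the claim, which is exactly the fallback argument you state, so the proof stands. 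The net comparison: your version cleanly separates which hypothesis yields which conclusion (differentiability for the gradient identity, Lipschitzness only for the bound on $a$) and avoids external citations, while the paper's version is shorter once the descent lemma and the cited subdifferentiability results are taken as given.
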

\begin{proof}
By \cite[Proposition 3.6]{syga2019global} and \cite[Proposition 2]{bednarczuk2023convergence} which infers $\partial_{lsc}^{\geq} g (x) \neq\emptyset$ for all $x\in X$. By Proposition \ref{proP:121}, $\lsc^{\geq} \subset \lsc^{\R}$, so $\partial_{lsc}^\R g (x)\neq \emptyset$ for all $x\in X$. For the second assertion,
let $x\in X$. Since $g$ is Fr{\' e}chet-differentiable at $x\in X$ with Lipschitz continuous gradient, we have, for all $y\in X$ 
\begin{equation}
\label{eq: grad and subgrad 2}
    \frac{L_g}{2}\Vert y-x \Vert^2 +\langle\nabla g(x),y-x\rangle \geq g(y) -g(x),
\end{equation}
which is the well-known descent Lemma \cite[Lemma 2.64]{Bau2011}.

On the other hand, by taking $(a,u)\in\partial_{lsc}^\R  g(x)$, we have
\begin{equation}
    \label{eq: grad and subgrad 1}
    (\forall y\in X) \qquad g(y) -g(x) \geq -a \left( \Vert y\Vert^2 - \Vert x\Vert^2\right) +\langle u,y-x\rangle.
\end{equation}
Combining \eqref{eq: grad and subgrad 1}, \eqref{eq: grad and subgrad 2} and separating $x$ and $y$,
\begin{equation*}
    \left(\frac{L_g}{2} +a\right)\Vert y \Vert^2 +\langle \nabla g(x) - L_g x-u,y\rangle \geq \left(\frac{L_g}{2} +a\right)\Vert x \Vert^2 +\langle \nabla g(x) - L_g x-u, x\rangle.
\end{equation*}
As this holds for all $y\in X$, $x$ is the global minimizer of the function 
\begin{equation*}
h(z) = \left(\frac{L_g}{2} +a\right)\Vert z \Vert^2 +\langle \nabla g(x) - L_g x-u,z\rangle, 
\end{equation*}
 which is quadratic. This implies that $\frac{L_g}{2}+a \geq 0$ and
\begin{equation*}
    \nabla h (x) = 0 \Leftrightarrow u-2a x = \nabla g(x).
\end{equation*}
Since $(a,u)\in\partial_{lsc}^\R  g(x)$ is taken arbitrarily, the above inequality holds for all the elements in $\partial_{lsc}^\R  g(x)$.
\end{proof}

With the result obtained in Lemma \ref{lem: lsc subdiff and grad}, we can improve algorithm \ref{alg:FB-example} with the following algorithm \ref{alg:FB-smooth g-example}.

\begin{algorithm}[H]\caption{$\lsc^\R$-Forward-Backward Algorithm with $g$ as in Lemma \ref{lem: lsc subdiff and grad}}\label{alg:FB-smooth g-example}
\begin{enumerate}
    \item \textbf{Initialize:} $\gamma >0$, $x_0 \in \dom f$ and $\left( a_0, (\frac{1}{\gamma}+2a_0)x_0\right) \in J_\gamma (x_0), (a_0^g,u_0^g) \in \partial_{lsc}^\R g(x_0)$
    \item \textbf{For $n>0$, update}
    \begin{itemize}
        \item Pick $\left( a_n, (\frac{1}{\gamma}+2a_n)x_n\right) \in J_\gamma (x_n), (a_n^g,u_n^g) \in \partial_{lsc}^\R g(x_n)$
        \item $x_{n+1} \in \argmind{z\in X}{ f(z) + \langle \nabla g(x_n),z\rangle +\left( \frac{1}{2\gamma} +a_n-a_n^g\right) \Vert z-x_n\Vert^2}$
    \end{itemize}
\end{enumerate}
\end{algorithm}
\vspace{0.5cm}

Below, we give the estimation for the behavior of the objective functions for \eqref{eq:forward-backward-operator 1}.
\begin{proposition}
\label{prop:FB f+g norm}
Let $f:X\to (-\infty,+\infty]$ be a proper $\lsc^\R$-convex function and $g:X\to (-\infty,+\infty]$ be a proper Fr{\'e}chet differentiable on $X$ with Lipschitz continuous gradient $L_g>0$. Let \eqref{eq:well-defined FB} hold and $\left( x_n\right)_{n\in\N}$ be a sequence generated by \eqref{eq:forward-backward-operator 1} with stepsize $\gamma>0$. 
For all $y\in X$ and $n\in\N$,
we have
\begin{align}
    (f+g)(y) -(f+g)(x_{n+1}) &\geq \left(\frac{1}{2\gamma}+a_{n+1}\right)\left\Vert y-x_{n+1}\right\Vert ^{2}-\left(\frac{1}{2\gamma}+a_{n}\right) \left\Vert y-x_{n}\right\Vert ^{2} \nonumber\\
    & +\left(\frac{1}{2\gamma}+a_{n}-a_n^g -\frac{L_g}{2} \right)  \left\Vert x_{n+1}-x_{n}\right\Vert ^{2}, \label{eq:prox alg f+g norm}
\end{align}
where $\left(a_{n},\left(\frac{1}{\gamma}+2a_{n}\right)x_{n}\right)\in J_\gamma \left(x_{n}\right)$,
$\left(a_{n+1},\left(\frac{1}{\gamma}+2a_{n+1}\right)x_{n+1}\right)\in J_\gamma \left(x_{n+1}\right)$, and  ${(a_n^g,u_n^g )\in\partial_{lsc}^\R g(x_n)}$.
Moreover, if $\frac{1}{\gamma}+a_n+a_{n+1} \geq a_n^g +\frac{L_g}{2}$, then $(f+g)(x_n)\geq (f+g)(x_{n+1})$.
\end{proposition}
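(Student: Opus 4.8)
The plan is to mimic the proof of Proposition \ref{prop:prox f norm}, but carrying the extra term coming from $g$ through the descent lemma. By the well-definedness assumption \eqref{eq:well-defined FB}, the update \eqref{eq:forward-backward-operator 1} produces, at step $n$, elements $\left(a_{n},(\tfrac1\gamma+2a_n)x_n\right)\in J_\gamma(x_n)$, $\left(a_{n+1},(\tfrac1\gamma+2a_{n+1})x_{n+1}\right)\in J_\gamma(x_{n+1})$ and $(a_n^g,u_n^g)\in\partial_{lsc}^\R g(x_n)$ such that
\[
\left(a_{n},(\tfrac1\gamma+2a_n)x_n\right)-\left(a_{n+1},(\tfrac1\gamma+2a_{n+1})x_{n+1}\right)-(a_n^g,u_n^g)\in\partial_{lsc}^\R f(x_{n+1}).
\]
First I would write out the $\lsc^\R$-subgradient inequality for $f$ at $x_{n+1}$ against an arbitrary $y\in X$; this is exactly the computation performed in the proof of Theorem \ref{thm: lsc FB equiv FB} (with $x=x_{n+1}$, $x_0=x_n$, $a_0=a_n$, $a=a_{n+1}$, $a_0^g=a_n^g$, $u_0^g=u_n^g$). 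That computation, after collecting the quadratic terms and rewriting inner products as norms, yields
\[
f(y)-f(x_{n+1})\ \ge\ \left(\tfrac1{2\gamma}+a_{n+1}\right)\|y-x_{n+1}\|^2-\left(\tfrac1{2\gamma}+a_n-a_n^g\right)\|y-x_n\|^2+\left(\tfrac1{2\gamma}+a_n-a_n^g\right)\|x_n-x_{n+1}\|^2-\langle u_n^g-2a_n^g x_n,\,y-x_{n+1}\rangle.
\]

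The second step is to eliminate $u_n^g$ and $a_n^g$ in favour of $\nabla g(x_n)$. By Lemma \ref{lem: lsc subdiff and grad}, $u_n^g-2a_n^g x_n=\nabla g(x_n)$, so the linear term above is simply $-\langle\nabla g(x_n),y-x_{n+1}\rangle$. Next I would invoke the descent lemma \eqref{eq: grad and subgrad 2} for $g$ at the point $x_n$, once with the argument $y$ and once with the argument $x_{n+1}$:
\[
g(y)\ \ge\ g(x_n)+\langle\nabla g(x_n),y-x_n\rangle,\qquad g(x_{n+1})\ \le\ g(x_n)+\langle\nabla g(x_n),x_{n+1}-x_n\rangle+\tfrac{L_g}{2}\|x_{n+1}-x_n\|^2.
\]
Wait — the first of these is only valid if $g$ is convex; since $g$ is merely $L_g$-smooth, I should instead use the two-sided descent estimate, i.e. $g(y)\ge g(x_n)+\langle\nabla g(x_n),y-x_n\rangle-\tfrac{L_g}{2}\|y-x_n\|^2$. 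Subtracting the $x_{n+1}$-inequality from the $y$-inequality gives
\[
g(y)-g(x_{n+1})\ \ge\ \langle\nabla g(x_n),y-x_{n+1}\rangle-\tfrac{L_g}{2}\|y-x_n\|^2-\tfrac{L_g}{2}\|x_{n+1}-x_n\|^2.
\]

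Adding this to the inequality for $f$, the cross term $\pm\langle\nabla g(x_n),y-x_{n+1}\rangle$ cancels, and collecting the coefficients of $\|y-x_n\|^2$ and $\|x_{n+1}-x_n\|^2$ produces exactly \eqref{eq:prox alg f+g norm}; here I must keep track of whether the $-\tfrac{L_g}{2}\|y-x_n\|^2$ term is consistent with the claimed coefficient $-(\tfrac1{2\gamma}+a_n)$ on $\|y-x_n\|^2$ — if the stated proposition uses the plain (convex-type) descent inequality for $g$, then only the $x_{n+1}$-side estimate contributes the $-\tfrac{L_g}{2}$ and the $y$-side needs $g$ convex, which I should flag; otherwise the coefficients match after absorbing $-\tfrac{L_g}{2}$ into the $\|y-x_n\|^2$ term. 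For the monotonicity claim, set $y=x_n$ in \eqref{eq:prox alg f+g norm}: the $\|y-x_{n+1}\|^2$ and $\|y-x_n\|^2$ terms combine (since $\|x_n-x_n\|^2=0$) to leave
\[
(f+g)(x_n)-(f+g)(x_{n+1})\ \ge\ \left(\tfrac1\gamma+a_n+a_{n+1}-a_n^g-\tfrac{L_g}{2}\right)\|x_{n+1}-x_n\|^2,
\]
which is nonnegative precisely under the stated hypothesis $\tfrac1\gamma+a_n+a_{n+1}\ge a_n^g+\tfrac{L_g}{2}$. The main obstacle is bookkeeping: tracking the quadratic-term cancellations in the $f$-subgradient inequality (already done in Theorem \ref{thm: lsc FB equiv FB}) and making sure the $g$-contributions are split correctly between the $\|y-x_n\|^2$ and $\|x_{n+1}-x_n\|^2$ coefficients so that the final constants agree with \eqref{eq:prox alg f+g norm}.
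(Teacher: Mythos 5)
Your reduction of the $f$-part to the computation in Theorem \ref{thm: lsc FB equiv FB} is correct, and your treatment of the backward-facing term via Lemma \ref{lem: lsc subdiff and grad} together with the descent lemma at $x_{n+1}$ matches the paper. The genuine gap is the $y$-side estimate for $g$, which you flag but do not resolve. Your two-sided descent bound $g(y)\ge g(x_n)+\langle\nabla g(x_n),y-x_n\rangle-\tfrac{L_g}{2}\Vert y-x_n\Vert^2$ leads to the final coefficient $-\bigl(\tfrac{1}{2\gamma}+a_n-a_n^g+\tfrac{L_g}{2}\bigr)$ on $\Vert y-x_n\Vert^2$, which implies the claimed coefficient $-\bigl(\tfrac{1}{2\gamma}+a_n\bigr)$ only when $a_n^g\ge\tfrac{L_g}{2}$; since Lemma \ref{lem: lsc subdiff and grad} only guarantees $a_n^g\ge-\tfrac{L_g}{2}$, your inequality is strictly weaker than \eqref{eq:prox alg f+g norm} in general. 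Neither of your two suggested escapes works: $g$ is not assumed convex, and ``absorbing'' $\tfrac{L_g}{2}$ into the $\Vert y-x_n\Vert^2$ term changes the statement rather than recovering it. Only the monotonicity claim survives your route, because the offending term vanishes at $y=x_n$; but the inequality for general $y$ is exactly what Theorem \ref{thm:Fejer FB} later uses with $y=x^*$, so the gap matters.

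The missing idea, and what the paper actually does, is to use on the $y$-side not smoothness but the fact that $(a_n^g,u_n^g)$ is an abstract subgradient: by Definition \ref{def:Phi-subdiff}, $g(y)-g(x_n)\ge -a_n^g\bigl(\Vert y\Vert^2-\Vert x_n\Vert^2\bigr)+\langle u_n^g,y-x_n\rangle=-a_n^g\Vert y-x_n\Vert^2+\langle\nabla g(x_n),y-x_n\rangle$ for all $y\in X$. This global bound carries the coefficient $a_n^g$ rather than $\tfrac{L_g}{2}$, so the surplus $+a_n^g\Vert y-x_n\Vert^2$ already present in your $f$-inequality (whose $\Vert y-x_n\Vert^2$ coefficient is $-(\tfrac{1}{2\gamma}+a_n-a_n^g)$) cancels exactly, producing the claimed $-(\tfrac{1}{2\gamma}+a_n)$. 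The descent lemma is then needed only for the single term $\langle\nabla g(x_n),x_n-x_{n+1}\rangle\le g(x_n)-g(x_{n+1})+\tfrac{L_g}{2}\Vert x_{n+1}-x_n\Vert^2$, which is the sole source of the $-\tfrac{L_g}{2}\Vert x_{n+1}-x_n\Vert^2$ in \eqref{eq:prox alg f+g norm}. With this one substitution on the $y$-side, your argument becomes the paper's proof; as written, it does not establish the stated inequality.
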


\begin{proof}
By the definition of the updated in \eqref{eq:forward-backward-operator 1}, there exist $\left(a_{n},\left(\frac{1}{\gamma}+2a_{n}\right)x_{n}\right)\in J_\gamma \left(x_{n}\right)$,
$\left(a_{n+1},\left(\frac{1}{\gamma}+2a_{n+1}\right)x_{n+1}\right)\in J_\gamma \left(x_{n+1}\right)$ and $(a_n^g,u_n^g)\in\partial_{lsc}^\R  g(x_n)$ with $a_n^g+\frac{L_g}{2} \geq 0$ such that
\begin{equation}
\label{eq: FB subdifferentials 1}
    \left(a_{n},\left(\frac{1}{\gamma}+2a_{n}\right)x_{n}\right)- \left(a_{n+1},\left(\frac{1}{\gamma}+2a_{n+1}\right)x_{n+1}\right) - (a_n^g,u_n^g) \in \partial_{lsc}^\R  f(x_{n+1}).
\end{equation}
Using the definition of $\lsc^\R$-subdifferentials of $f$, we have, for all $y\in X$
\begin{align*}
f\left(y\right)-f\left(x_{n+1}\right) & \geq-\left(a_{n}-a_{n}^{g}-a_{n+1}\right)\left(\left\Vert y\right\Vert ^{2}-\left\Vert x_{n+1}\right\Vert ^{2}\right)\\
 & +\left\langle \left(\frac{1}{\gamma}+2a_{n}\right)x_{n}-u_{n}^{g}-\left(\frac{1}{\gamma}+2a_{n+1}\right)x_{n+1},y-x_{n+1}\right\rangle .
\end{align*}
We proceed similarly as in Proposition \ref{prop:prox f norm} to obtain 
\begin{align}
f\left(y\right)-f\left(x_{n+1}\right) & \geq\left(\frac{1}{2\gamma}+a_{n+1}\right)\left\Vert y-x_{n+1}\right\Vert ^{2}-\left(\frac{1}{2\gamma}+a_{n}\right)\left\Vert y-x_{n}\right\Vert ^{2} \nonumber\\
 & +\left(\frac{1}{2\gamma}+a_{n}\right)\left\Vert x_{n+1}-x_{n}\right\Vert ^{2} \nonumber\\
 \label{eq:FB f+g from prox}
 & +a_{n}^{g}\left(\left\Vert y\right\Vert ^{2}-\left\Vert x_{n+1}\right\Vert ^{2}\right)-\left\langle u_{n}^{g},y-x_{n+1}\right\rangle .
\end{align}
Let us focus on the last terms on the right hand side of \eqref{eq:FB f+g from prox}
\begin{align}
-a_{n}^{g}\left(\left\Vert y\right\Vert ^{2}-\left\Vert x_{n+1}\right\Vert ^{2}\right)+\left\langle u_{n}^{g},y-x_{n+1}\right\rangle  & =-a_{n}^{g}\left(\left\Vert y\right\Vert ^{2}-\left\Vert x_{n}\right\Vert ^{2}\right)+\left\langle u_{n}^{g},y-x_{n}\right\rangle \nonumber\\
 & +a_{n}^{g}\left(\left\Vert x_{n+1}\right\Vert ^{2}-\left\Vert x_{n}\right\Vert ^{2}\right)+\left\langle u_{n}^{g},x_{n}-x_{n+1}\right\rangle \nonumber\\
 & \leq g\left(y\right)-g\left(x_{n}\right)+a_{n}^{g}\left(\left\Vert x_{n+1}\right\Vert ^{2}-\left\Vert x_{n}\right\Vert ^{2}\right)+\left\langle u_{n}^{g},x_{n}-x_{n+1}\right\rangle \nonumber\\
 & =g\left(y\right)-g\left(x_{n}\right)+a_{n}^{g}\left\Vert x_{n+1}-x_{n}\right\Vert ^{2} \nonumber\\
 & +\left\langle u_{n}^{g}-2a_{n}^{g}x_{n},x_{n}-x_{n+1}\right\rangle .\label{eq:FB subgrad of g}
\end{align}
Plugging \eqref{eq:FB subgrad of g} back into \eqref{eq:FB f+g from prox} we get
\begin{align}
\left(f+g\right) \left(y\right)-f\left(x_{n+1}\right)- g(x_n) & \geq\left(\frac{1}{2\gamma}+a_{n+1}\right)\left\Vert y-x_{n+1}\right\Vert ^{2}-\left(\frac{1}{2\gamma}+a_{n}\right)\left\Vert y-x_{n}\right\Vert ^{2} \nonumber\\
 & +\left(\frac{1}{2\gamma}+a_{n} - a_n^g\right)\left\Vert x_{n+1}-x_{n}\right\Vert ^{2} \nonumber\\
 & -\left\langle u_{n}^{g}-2a_{n}^{g}x_{n},x_{n}-x_{n+1}\right\rangle . \label{eq:FB f+g norm prox 1}
\end{align}

On the other hand, since $g$ is Fr{\'e}chet differentiable with Lipschitz continuous gradient, we apply Lemma \ref{lem: lsc subdiff and grad}
\begin{equation}
\label{eq: descent lemma xn xn+1 FB}
\left\langle u_{n}^{g}-2a_{n}^{g}x_{n},x_{n}-x_{n+1}\right\rangle =\left\langle \nabla g\left(x_{n}\right),x_{n}-x_{n+1}\right\rangle \leq g\left(x_{n}\right)-g\left(x_{n+1}\right)+\frac{L_{g}}{2}\left\Vert x_{n+1}-x_{n}\right\Vert ^{2}.
\end{equation}
Putting \eqref{eq: descent lemma xn xn+1 FB} back into \eqref{eq:FB f+g norm prox 1} to obtain
\begin{align*}
\left(f+g\right)\left(y\right)-\left(f+g\right)\left(x_{n+1}\right) & \geq\left(\frac{1}{2\gamma}+a_{n+1}\right)\left\Vert y-x_{n+1}\right\Vert ^{2}-\left(\frac{1}{2\gamma}+a_{n}\right)\left\Vert y-x_{n}\right\Vert ^{2}\\
 & +\left(\frac{1}{2\gamma}+a_{n}-a_{n}^{g}-\frac{L_{g}}{2}\right)\left\Vert x_{n+1}-x_{n}\right\Vert ^{2},
\end{align*}
which is \eqref{eq:prox alg f+g norm}.
On the other hand, letting $y=x_n$ in \eqref{eq:prox alg f+g norm}, 
\begin{equation}
\left(f+g\right)\left( x_n \right)-\left(f+g\right)\left(x_{n+1}\right)\geq \left(\frac{1}{\gamma}+a_{n+1} +a_n -a_n^g -\frac{L_g}{2}\right)\left\Vert x_n -x_{n+1}\right\Vert ^{2}. \label{eq: proof f+g xn xn+1}
\end{equation}
The RHS of \eqref{eq: proof f+g xn xn+1} is non-negative when $\frac{1}{\gamma}+a_n+a_{n+1} \geq a_n^g+\frac{L_g}{2}$. This conclude the proof. 
\end{proof}

The results obtained in Proposition \ref{prop:FB f+g norm} are similar to the one obtained for the Forward-Backward in \cite[Corollary 3]{bednarczuk2023convergence}.
In analogy to Proposition \ref{prop:prox f norm}, Proposition \ref{prop:FB f+g norm} gives us a crucial estimate of the \eqref{eq:forward-backward-operator 1} algorithm which contributes to the convergence result below.

\begin{theorem}
\label{thm:Fejer FB}
Let $f:X\to (-\infty,+\infty]$ be a proper $\lsc^\R$-convex function and $g:X\to (-\infty,+\infty]$ be a proper Fr\'{e}chet differentiable function on $X$ with Lipschitz continuous gradient with Lipschitz constant $L_g>0$. Let $\left( x_n\right)_{n\in\N}$, $(a_n)_{n\in\N}$, and $(a_n^g)_{n\in\N}$ be sequences generated by \eqref{eq:forward-backward-operator 1} with stepsize $\gamma>0$. Assume that
${\frac{1}{\gamma}+a_n+a_{n+1}\geq a_n^g +\frac{L_g}{2}}$ for all $n\in\N$ where
\begin{equation*}
{\left(a_{n},\left(\frac{1}{\gamma}+2a_{n}\right)x_{n}\right)\in J_\gamma \left(x_{n}\right)},
\quad (a_n^g,u_n^g) \in \partial_{lsc}^\R  g(x_n). 
\end{equation*}
 
We have the following
\begin{enumerate}
    \item If there exists $n_0\in\N$ such that $\frac{1}{2\gamma}+a_{n_0} = a_{n_0}^g +\frac{L_g}{2}=0$ then $x_{n_0+1}$ is the global minimizer.
    \item If $\alpha_n =\frac{1}{2\gamma}+a_n >0$ and $\beta_n = \frac{1}{2\gamma}+a_n -a_n^g -\frac{L_g}{2} > 0$ for all $n\in\N$, Theorem \ref{thm: general convergence results} holds and $\lim_{n\to\infty} (f+g)(x_n) = \inf_{x\in X} (f+g)(x)$.
\end{enumerate}
\end{theorem}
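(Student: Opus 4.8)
The plan is to follow the proof of Theorem~\ref{thm:Fejer prox pt} almost verbatim, with Proposition~\ref{prop:FB f+g norm} playing the role that Proposition~\ref{prop:prox f norm} played there. Set $\alpha_n=\frac{1}{2\gamma}+a_n$ and $\beta_n=\frac{1}{2\gamma}+a_n-a_n^g-\frac{L_g}{2}$; Proposition~\ref{prop:FB f+g norm} then says that for all $y\in X$ and $n\in\N$,
\[
(f+g)(y)-(f+g)(x_{n+1})\geq \alpha_{n+1}\Vert y-x_{n+1}\Vert^2-\alpha_n\Vert y-x_n\Vert^2+\beta_n\Vert x_{n+1}-x_n\Vert^2 .
\]
Observe at the outset that, since $\bigl(a_{n+1},(\tfrac1\gamma+2a_{n+1})x_{n+1}\bigr)\in J_\gamma(x_{n+1})$, Example~\ref{ex:1 J function} forces $2\gamma a_{n+1}\geq -1$, hence $\alpha_{n+1}\geq 0$ for every $n$; this is all the nonnegativity of the leading coefficient we shall need in part (1).

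For part (1), apply the displayed inequality at $n=n_0$. The hypothesis $\frac{1}{2\gamma}+a_{n_0}=0$ makes $\alpha_{n_0}=0$, and since $a_{n_0}^g+\frac{L_g}{2}=0$ it also makes $\beta_{n_0}=0$. The estimate therefore collapses to $(f+g)(y)-(f+g)(x_{n_0+1})\geq \alpha_{n_0+1}\Vert y-x_{n_0+1}\Vert^2\geq 0$ for every $y\in X$, which is exactly the statement that $x_{n_0+1}$ is a global minimizer of $f+g$; hence the algorithm may be terminated after $n_0+1$ iterations.

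For part (2), substitute $y=x^\ast\in S$ in the displayed inequality. Because $x^\ast$ is a global minimizer, the left-hand side is $\leq 0$, and since $\alpha_n,\beta_n>0$ for all $n$ by hypothesis, rearranging yields
\[
\alpha_{n+1}\Vert x^\ast-x_{n+1}\Vert^2\leq \alpha_n\Vert x^\ast-x_n\Vert^2-\beta_n\Vert x_{n+1}-x_n\Vert^2 ,
\]
which is precisely inequality~\eqref{eq: general sequence xn} of Theorem~\ref{thm: general convergence results} applied to $h=f+g$; thus assertions (i)--(iv) of that theorem hold, and (v)--(vi) hold additionally when $(a_n)_{n\in\N}$ (equivalently $(\alpha_n)_{n\in\N}$) is non-decreasing. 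For the objective values, drop the nonnegative term $\beta_n\Vert x_{n+1}-x_n\Vert^2$ to obtain $0\geq (f+g)(x^\ast)-(f+g)(x_{n+1})\geq \alpha_{n+1}\Vert x^\ast-x_{n+1}\Vert^2-\alpha_n\Vert x^\ast-x_n\Vert^2$; by Theorem~\ref{thm: general convergence results}(i) the sequence $\bigl(\alpha_n\Vert x^\ast-x_n\Vert^2\bigr)_{n\in\N}$ converges, so its successive differences tend to $0$, and squeezing forces $(f+g)(x_{n+1})\to (f+g)(x^\ast)=\inf_{x\in X}(f+g)(x)$.

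There is no genuine obstacle here; the only thing to watch is the bookkeeping. One must make sure that the coefficients $a_n$, $a_n^g$, $u_n^g$ entering the argument are those supplied by the well-definedness hypothesis~\eqref{eq:well-defined FB} together with Lemma~\ref{lem: lsc subdiff and grad}, which guarantees $a_n^g+\frac{L_g}{2}\geq 0$ and $u_n^g-2a_n^g x_n=\nabla g(x_n)$, so that Proposition~\ref{prop:FB f+g norm} really does apply at each step. The standing assumption $\frac1\gamma+a_n+a_{n+1}\geq a_n^g+\frac{L_g}{2}$ is what underlies the monotonicity $(f+g)(x_n)\geq (f+g)(x_{n+1})$ recorded in Proposition~\ref{prop:FB f+g norm} and is inherited here, but it is the positivity of $\alpha_n,\beta_n$ in (2) (respectively their vanishing in (1)) that actually drives the two conclusions.
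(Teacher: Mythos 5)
Your proposal is correct and follows essentially the same route the paper intends: its proof of Theorem~\ref{thm:Fejer FB} is only the remark that it ``follows in the same manner as'' Theorem~\ref{thm: general convergence results} and Theorem~\ref{thm:Fejer prox pt}, and you carry out exactly that argument, with Proposition~\ref{prop:FB f+g norm} replacing Proposition~\ref{prop:prox f norm} and with $\alpha_n=\tfrac{1}{2\gamma}+a_n$, $\beta_n=\tfrac{1}{2\gamma}+a_n-a_n^g-\tfrac{L_g}{2}$ fed into inequality~\eqref{eq: general sequence xn}. Your write-up in fact supplies the details (vanishing of $\alpha_{n_0},\beta_{n_0}$ in part (1), the squeeze for the objective values in part (2)) that the paper leaves implicit.
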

\begin{proof}
The proof follows in the same manner as in Theorem \ref{thm: general convergence results} and Theorem \ref{thm:Fejer prox pt}.
\end{proof}


\section{Projected Subgradient for $\lsc^\R$-convex function}
\label{sec: projected sub alg}
We saw in the previous section thatthe  \eqref{eq:forward-backward-operator 1} algorithm can be written with the $\lsc^\R$-proximal operator. In this section, we investigate the case of the $\lsc^\R$-proximity operator of the indicator function of a closed convex set. This is equivalent to solving the constrainted problem
\begin{equation}
\label{prob: constrainted prob}
    \min_{x\in C} f(x),
\end{equation}
where $C\subset X$ is a closed convex set in Hilbert space and $f:X\to (-\infty,+\infty]$ is proper $\lsc^\R$-convex. We can rewrite this problem in the form
\begin{equation*}
    \min_{x\in X} f(x) +\iota_C (x).
\end{equation*}
To solve \eqref{prob: constrainted prob}, we propose $\lsc^\R$-projected subgradient algorithm which is formally given in Algorithm \ref{alg:PSG-example}. 

\begin{algorithm}[H]\caption{$\lsc^\R$-Projected Subgradient Algorithm}\label{alg:PSG-example}
\textbf{Initialize:} $x_0 \in \dom f$ \\
{\textbf{Set:} $(\gamma_n)_{n\in\N}$ positive\\
\textbf{Compute}: $\left(a_n,\left(\frac{1}{\gamma_n} +2a_n\right) x_n\right)\in J_{\gamma_n} (x_n)$ and $(a_n^f,u_n^f) \in \partial_{lsc}^\R  f(x_n)$ such that
\begin{equation*}
    2\gamma_n(a_n -a_n^f) >-1
\end{equation*}}\\
{\normalfont\textbf{Update: }}{$\displaystyle{x_{n+1} = \Proj_C \left( \frac{(1+2\gamma_n a_n) x_n - \gamma_n u_n^f}{1+ 2\gamma_n (a_n-a_n^f) } \right)}$
}\\
\textbf{Return: $x_{n+1}$}
\end{algorithm}
\vspace{0.5cm}

Further details about the algorithm and the variable stepsize $\gamma_n$ are given in \eqref{eq: projected subgradient explicit form} and in Theorem \ref{thm: PSG general case convergence} below.
In the following proposition, we show that the $\lsc^\R$-proximal operator of indicator function of $C$ coincides with the projection onto $C$ without convexity assumption.

\begin{proposition}
\label{prop: lsc prox projection}
Let $C$ be a closed subset of $X$ with more than one element and let $x\in X, \gamma>0$. If $(-\tfrac1{2\gamma},0)\notin J_\gamma (x)\cap \mathrm{ran } (J_\gamma+\partial_{lsc}^\R  \iota_C)$ then we have 
\[
\left(J_\gamma+\partial_{lsc}^\R \iota_{C}\right)^{-1} J_\gamma\left(x\right) =\Proj_{C}\left(x\right).
\]
\end{proposition}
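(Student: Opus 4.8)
The plan is to prove the two inclusions $\proxlsc_{\gamma\iota_C}(x)\subseteq\Proj_C(x)$ and $\Proj_C(x)\subseteq\proxlsc_{\gamma\iota_C}(x)$ separately, working directly from the definition $\proxlsc_{\gamma\iota_C}(x)=(J_\gamma+\partial_{lsc}^\R\iota_C)^{-1}J_\gamma(x)$ together with the explicit descriptions of $J_\gamma$ (Example~\ref{ex:1 J function}) and of $\partial_{lsc}^\R\iota_C$. First I would record two preliminary facts: $\dom(J_\gamma+\partial_{lsc}^\R\iota_C)=\dom J_\gamma\cap\dom\partial_{lsc}^\R\iota_C=C$, so every point of $\proxlsc_{\gamma\iota_C}(x)$ already lies in $C$; and for any $\phi=(a,u)$ in the set $J_\gamma$ of some point, the leading coefficient $\tfrac1\gamma+2a$ is $\ge 0$ because $2\gamma a\ge -1$, with equality precisely when $\phi=(-\tfrac1{2\gamma},0)$.

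For $\proxlsc_{\gamma\iota_C}(x)\subseteq\Proj_C(x)$, take $z$ in the left-hand side. Then $z\in C$ and there is a $\phi_0\in J_\gamma(x)$ which decomposes as $\phi_0=\phi_1+\phi_2$ with $\phi_1\in J_\gamma(z)$ and $\phi_2\in\partial_{lsc}^\R\iota_C(z)$; in particular $\phi_0\in J_\gamma(x)\cap\mathrm{ran}(J_\gamma+\partial_{lsc}^\R\iota_C)$, so the hypothesis forces $\phi_0\neq(-\tfrac1{2\gamma},0)$, i.e. the leading coefficient $c_0:=\tfrac1\gamma+2a_0$ of $\phi_0$ is strictly positive. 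Writing $\phi_0,\phi_1$ in the form of Example~\ref{ex:1 J function} and putting $\phi_2=\phi_0-\phi_1=(a_2,u_2)$, I would substitute $\phi_2$ into the defining inequality of the $\lsc^\R$-subgradient at $z$ and restrict to $y\in C$ (so the left side vanishes); after substituting $u_2$, expanding $\|y\|^2-\|z\|^2=\|y-z\|^2+2\langle z,y-z\rangle$, and using the identity $2a_2+(\tfrac1\gamma+2a_1)=c_0$, this collapses to $a_2\|y-z\|^2\ge c_0\langle x-z,y-z\rangle$ for all $y\in C$. Dividing by $c_0>0$ and using $2a_2=c_0-(\tfrac1\gamma+2a_1)\le c_0$ (since $\tfrac1\gamma+2a_1\ge 0$) yields $\langle x-z,y-z\rangle\le\tfrac12\|y-z\|^2$, which is exactly $\|y-x\|^2\ge\|z-x\|^2$ for every $y\in C$; hence $z\in\Proj_C(x)$.

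For the reverse inclusion I would produce an explicit certificate. Given $z\in\Proj_C(x)$, choose $\phi_0=(0,\tfrac1\gamma x)\in J_\gamma(x)$ and decompose $\phi_0=(-\tfrac1{2\gamma},0)+(\tfrac1{2\gamma},\tfrac1\gamma x)$. The first summand always belongs to $J_\gamma(z)$, and the second summand $\phi_2=(\tfrac1{2\gamma},\tfrac1\gamma x)$ has $u_2/(2a_2)=x$, so by the description of $\partial_{lsc}^\R\iota_C$ the relation $\phi_2\in\partial_{lsc}^\R\iota_C(z)$ is precisely $z\in\argmin_{y\in C}\|y-x\|$, which holds by the choice of $z$. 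Hence $\phi_0\in(J_\gamma+\partial_{lsc}^\R\iota_C)(z)$, i.e. $z\in(J_\gamma+\partial_{lsc}^\R\iota_C)^{-1}J_\gamma(x)$. (If $\Proj_C(x)=\emptyset$ this inclusion is vacuous, and the first part then forces both sides to be empty.)

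The delicate point is entirely in the $\subseteq$ direction: one must recognise that the only "bad" element is $(-\tfrac1{2\gamma},0)$ — which, were it admissible, would pull back under $(J_\gamma+\partial_{lsc}^\R\iota_C)^{-1}$ to all of $C$ rather than to the projection — and that the hypothesis removes exactly this element; after that the argument is a completion of squares whose only nontrivial input is the identity $2a_2+(\tfrac1\gamma+2a_1)=\tfrac1\gamma+2a_0$ combined with the sign constraints $\tfrac1\gamma+2a_1\ge 0$ and $\tfrac1\gamma+2a_0>0$, which together pin $a_2/c_0\le\tfrac12$. The assumption that $C$ have more than one element only serves to make the statement non-trivial, since for a singleton $C$ one has $\Proj_C(x)=C$ regardless.
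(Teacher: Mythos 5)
Your argument is correct and essentially coincides with the paper's own proof: in the forward inclusion you plug the decomposition $\phi_0=\phi_1+\phi_2$ into the $\lsc^\R$-subgradient inequality for $\iota_C$ at $z$, use the hypothesis exactly as the paper does (to exclude the witness $(-\tfrac1{2\gamma},0)$ and force $c_0=\tfrac1\gamma+2a_0>0$), and your bound $a_2/c_0\le\tfrac12$ is the same estimate the paper gets by discarding the nonnegative term $\left(\tfrac1{2\gamma}+a^+\right)\Vert y-x^+\Vert^2$ before dividing by $\tfrac1{2\gamma}+a>0$. Your reverse inclusion is the paper's construction specialized to $a=0$: the decomposition $(0,\tfrac1\gamma x)=(-\tfrac1{2\gamma},0)+(\tfrac1{2\gamma},\tfrac1\gamma x)$ with $(-\tfrac1{2\gamma},0)\in J_\gamma(z)$ and $(\tfrac1{2\gamma},\tfrac1\gamma x)\in\partial_{lsc}^\R\iota_C(z)$ is exactly the paper's identity exhibiting $\left(\tfrac1{2\gamma}+a,(\tfrac1\gamma+2a)x\right)\in J_\gamma(x)-J_\gamma(x^+)$.
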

\begin{proof}
Recall that
\begin{equation}
\label{eq: prox indicator C}
    J_\gamma (x)\cap \mathrm{ran } (J_\gamma+\partial_{lsc}^\R  \iota_C) =\{ \phi\in J_\gamma (x): \exists x^+ \in X \ \text{s.t. } \phi \in J_\gamma (x^+)+\partial_{lsc}^\R  \iota_C (x^+) \}.
\end{equation}
Let $x^{+}\in\left(J_\gamma+\partial_{lsc}^\R \iota_{C}\right)^{-1} J_\gamma\left(x\right)$. By assumption, \eqref{eq: prox indicator C} implies that there exist $(a,(1/\gamma+2a)x) \in J_\gamma(x)$ and
${ (a^+,(1/\gamma+2a^+ )x^+) \in J_\gamma(x^+)}$  such that
\begin{align}
\label{eq: prox indicator explicit form}
\left(a-a^{+},\left(\frac{1}{\gamma}+2a\right)x-\left(\frac{1}{\gamma}+2a^{+}\right)x^{+}\right) & \in\partial_{lsc}^\R \iota_{C}\left(x^{+}\right), \quad 2\gamma a > -1, \ 2\gamma a^{+} \geq -1.
\end{align}
If $x^{+}\notin C$ then $\partial_{lsc}^\R \iota_{C}\left(x^{+}\right) = \emptyset$, which contradicts the above inclusion. Hence, it must be $x^{+}\in C$. By the definition of $\Phi_{lsc}^\R$-subdifferentials, for any $y\in X$, we have 
\begin{equation}
\iota_{C}\left(y\right)-\iota_{C}\left(x^{+}\right)\geq-\left(a-a^{+}\right)\left(\left\Vert y\right\Vert ^{2}-\left\Vert x^{+}\right\Vert ^{2}\right)+\left\langle \left(\frac{1}{\gamma}+2a\right)x-\left(\frac{1}{\gamma}+2a^{+}\right)x^{+},y-x^{+}\right\rangle .
\label{eq: subgrad iota C}
\end{equation}

By taking $y\in C$, \eqref{eq: subgrad iota C} reads 
\begin{align}
\left(a-a^{+}\right)\left(\left\Vert y\right\Vert ^{2}-\left\Vert x^{+}\right\Vert ^{2}\right) & \geq\left\langle \left(\frac{1}{\gamma}+2a\right)x-\left(\frac{1}{\gamma}+2a^{+}\right)x^{+},y-x^{+}\right\rangle .\label{eq: subgrad iota C1}
\end{align}
Further simplifying \eqref{eq: subgrad iota C1}
\begin{align}
\left(a-a^{+}\right)\left\Vert y-x^+\right\Vert ^{2} & \geq \left(\frac{1}{\gamma}+2a\right) \left\langle x-x^+ ,y-x^{+}\right\rangle .\label{eq: subgrad iota C2}
\end{align} 
From \eqref{eq: subgrad iota C2}, we have 
\begin{equation*}
    \left(a-a^{+}\right)\left\Vert y-x^+\right\Vert ^{2}  \geq \left(\frac{1}{2\gamma} +a\right)\left\Vert y-x^+\right\Vert ^{2} + \left(\frac{1}{2\gamma} + a\right)\left\Vert x-x^+\right\Vert ^{2}-\left( \frac{1}{2\gamma}+a\right)\left\Vert y-x\right\Vert ^{2},
\end{equation*}
so that 
\begin{equation}
    \left( \frac{1}{2\gamma}+a\right)\left\Vert y-x\right\Vert ^{2} \geq \left(\frac{1}{2\gamma} +a^+\right)\left\Vert y-x^+\right\Vert ^{2} + \left(\frac{1}{2\gamma} + a\right)\left\Vert x-x^+\right\Vert ^{2}.
\end{equation}
As $2\gamma a > -1, 2\gamma a^+ \geq -1$, all the coefficients are nonnegative, this infers
\begin{equation}
    \label{eq: norm square indicator}
    \left( \frac{1}{2\gamma}+a\right)\left\Vert y-x\right\Vert ^{2} \geq  \left(\frac{1}{2\gamma} + a\right)\left\Vert x-x^+\right\Vert ^{2},
\end{equation}
for all $y\in C$. By \eqref{eq: prox indicator explicit form}, $x^+ \in \Proj_C (x)$.

On the other hand, let us assume that $x^{+} \in \Proj_{C}\left(x\right)$, we have 
\begin{equation}
    \label{eq: projection property}
    (\forall y\in C) \qquad \Vert y-x\Vert \geq \Vert x^+ - x\Vert.
\end{equation}
Taking square both sides of \eqref{eq: projection property} and multiply with $\frac{1}{2\gamma} +a >0$ for some $a \in\R$, we obtain
\begin{equation*}
    \left(\frac{1}{2\gamma} +a \right) \Vert y-x\Vert^2 \geq  \left(\frac{1}{2\gamma} +a \right) \Vert x^+-x\Vert^2.
\end{equation*}
This is equivalent to 
\begin{equation}
    \label{eq:projection property 2}
    0 \geq - \left(\frac{1}{2\gamma} +a \right) (\Vert y\Vert^2 -\Vert x^+\Vert^2) +\langle \left(\frac{1}{\gamma} +2a \right)x, y-x^+\rangle. 
\end{equation}
Because $x^+\in C$, \eqref{eq:projection property 2} implies that $\left(\frac{1}{2\gamma} +a, \left(\frac{1}{\gamma} +2a \right)x \right)\in\partial_{lsc}^\R  \iota_C (x^+)$.
We can write 
\begin{equation}
    \left(\frac{1}{2\gamma} +a, \left(\frac{1}{\gamma} +2a \right)x \right) = \left( a- \left(-\frac{1}{2\gamma}\right), \left(\frac{1}{\gamma} +2a \right)x - \left(\frac{1}{\gamma} -\frac{1}{\gamma} \right)x^+\right)\in J_\gamma (x) -J_\gamma (x^+).
\end{equation}
This infers $\left(a, \left(\frac{1}{\gamma} +2a \right)x \right) \in (\partial_{lsc}^\R  \iota_C +J_\gamma)(x^+)$, so that $x^+ \in \proxlsc_{\gamma \iota_C}(x)$.
\end{proof}
\begin{remark}
    The assumption $(-\tfrac1{2\gamma},0)\notin J_\gamma (x)\cap \mathrm{ran } (J_\gamma+\partial_{lsc}^\R  \iota_C)$ also implies the uniqueness of the $\lsc^\R$-proximal operator of an indicator function. Assume that there are $x_1,x_2 \in \left(J_\gamma+\partial_{lsc}^\R \iota_{C}\right)^{-1} J_\gamma\left(x\right)$. Following \eqref{eq: norm square indicator} in the proof of Proposition \ref{prop: lsc prox projection}, we have
    \begin{equation*}
        \Vert x-x_2\Vert^2 \geq \Vert x-x_1\Vert^2 \geq \Vert x-x_2\Vert^2,
    \end{equation*}
so $x_1=x_2$.
\end{remark}
Let us go back to problem \eqref{prob: constrainted prob}. Since $C$ is closed, the function $\iota_C$ is lsc and so it is $\lsc^\R$-convex (by virtue of \cite[Proposition 6.3]{Rub2013}).
Motivated by \eqref{eq:forward-backward-operator 1} and Proposition \ref{prop: lsc prox projection}, we propose $\lsc^\R$-Projected Subgradient Algorithm,
\begin{align}
    x_{n+1} &\in \Proj_C \left(J_\gamma^{-1} \left(J_\gamma-\partial_{lsc}^\R  f\right)\left(x_{n}\right)\right). \label{eq: projected subgradient}\tag{$\lsc^\R$-PSG}
\end{align}
Observe that if ${\mathrm{ran } J_\gamma \cap \mathrm{ran } (J_\gamma-\partial_{lsc}^\R  f) \neq \emptyset}$, algorithm \eqref{eq: projected subgradient} is well-defined i.e. for every $n\in\N$, there exists $x_{n+1}$ such that \eqref{eq: projected subgradient} holds.

In fact, ${\mathrm{ran } J_\gamma \cap \mathrm{ran } (J_\gamma-\partial_{lsc}^\R  f)}$ is always nonempty. Let us prove this by taking $x\in \dom \partial_{lsc}^\R  f$, $(a_f,u_f) \in \partial_{lsc}^\R  f(x)$ and $(a,(\frac{1}{\gamma}+2a)x) \in J_\gamma(x)$ with $\gamma>0$ and $2a\gamma\geq -1$. Then 
\begin{align}
    J_\gamma^{-1} \left(a-a_f, \left(\frac{1}{\gamma}+2a \right)x - u_f \right) & \subseteq J_\gamma^{-1} \left( J_\gamma-\partial_{lsc}^\R  f \right)(x) . \label{eq: J-1 J-f}
\end{align}
By formula \eqref{eq: subgrad of J_gamma} in Example \ref{ex:1 J function}, the domain of $J_\gamma^{-1}$ is nonempty when $a-a_f \geq -1/(2\gamma)$. While $-1/(2\gamma) \leq a$, we can take $a$ large enough so that $a-a_f >-1/(2\gamma)$. Again, by formula \eqref{eq: subgrad of J_gamma}, $J_\gamma^{-1}$ is a single valued map and we obtain
\begin{equation*}
    J_\gamma^{-1} \left( J_\gamma-\partial_{lsc}^\R  f \right)(x) 
    = \frac{\left(\frac{1}{\gamma}+2a \right)x - u^f}{\frac{1}{\gamma}+ 2(a - a^f) } 
    =\frac{(1+2\gamma a) x - \gamma u^f}{1+ 2\gamma (a -a^f) }.
\end{equation*}
On the other hand, when
\begin{equation}
a-a_f = -\frac{1}{2\gamma},\text{ and } \left( \frac{1}{\gamma} +2a\right)x-u_f =0, \label{eq: psg-critical point special case}
\end{equation}
then by \eqref{eq: subgrad of J_gamma}
\[
J_\gamma^{-1} \left(a-a_f, \left(\frac{1}{\gamma}+2a \right)x - u_f \right) = X.
\]
Moreover, \eqref{eq: psg-critical point special case} implies that $x$ is a $a + 1/(2\gamma)$-critical point of $f$ (see Definition \ref{def: lsc stationary}). 

With the well-defined inner operator of \eqref{eq: projected subgradient}, we have
\begin{equation*}
    J_\gamma^{-1} \left( J_\gamma-\partial_{lsc}^\R  f \right)(x_n) 
    = \frac{\left(\frac{1}{\gamma}+2a_n \right)x_n - u_n^f}{\frac{1}{\gamma}+ 2(a_n-a_n^f) } 
    =\frac{(1+2\gamma a_n) x_n - \gamma u_n^f}{1+ 2\gamma (a_n-a_n^f) },
\end{equation*}
with $\gamma>0, 2\gamma(a_n -a_n^f)> -1$.
\eqref{eq: projected subgradient} takes an explicit form
\begin{equation}
    x_{n+1} \in \Proj_C \left(J_\gamma^{-1} \left(J_\gamma-\partial_{lsc}^\R  f\right)\left(x_{n}\right)\right) = \Proj_C \left( \frac{(1+2\gamma a_n) x_n - \gamma u_n^f}{1+ 2\gamma (a_n-a_n^f) } \right). \label{eq: projected subgradient explicit form}
\end{equation}



Let us state an estimate related to the objective function of the $\lsc^\R$-PSG algorithm to problem \eqref{prob: constrainted prob} with the stepsize $\gamma>0$ being kept constant.
\begin{lemma}
\label{lem: PSG estimation-1}
Let $f:X\to (-\infty,+\infty]$ be a proper $\lsc^\R$-convex function and $C$ be a closed convex set. Let $(x_n)_{n\in\N}$ be the sequence generated by \eqref{eq: projected subgradient explicit form}, then the following holds for any $x\in C$
\begin{align}
   (1+2\gamma (a_n-a_n^f)) \Vert x-x_{n+1} \Vert^2 &\leq  \left(1+2\gamma a_n\right) \left\Vert x-x_{n}\right\Vert ^{2}+\frac{\gamma^{2}}{1+2\gamma\left(a_{n}-a_{n}^{f}\right)} \left\Vert 2a_{n}^{f}x_{n}-u_{n}^{f}\right\Vert ^{2} \nonumber\\
     & +2\gamma \left[f\left(x\right)-f\left(x_{n}\right)\right], \label{eq: PSG final norm}
\end{align}
where 
\begin{equation*}
    \left( a_n , \left( \frac{1}{\gamma}+2a_n\right) x_n \right) \in J_\gamma (x_n) ,\quad (a_n^f, u_n^f) \in \partial_{lsc}^\R  f(x_n), \quad 2\gamma (a_n -a_n^f) >-1 \ \forall n\in\N.
\end{equation*}

\end{lemma}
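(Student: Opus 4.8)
The plan is to derive \eqref{eq: PSG final norm} from two ingredients: the nonexpansiveness of the metric projection onto the closed convex set $C$, and the $\lsc^\R$-subgradient inequality for $f$ at $x_n$ coming from $(a_n^f,u_n^f)\in\partial_{lsc}^\R f(x_n)$. Throughout put $\beta_n:=1+2\gamma(a_n-a_n^f)$, which is strictly positive by hypothesis, and write $z_n:=\frac{(1+2\gamma a_n)x_n-\gamma u_n^f}{\beta_n}$, so that $x_{n+1}\in\Proj_C(z_n)$ by \eqref{eq: projected subgradient explicit form}. The first computation is the purely algebraic identity
\[
z_n-x_n=\frac{\gamma}{\beta_n}\bigl(2a_n^f x_n-u_n^f\bigr),
\]
obtained by subtracting $\beta_n x_n$ from the numerator of $z_n$ and using $\beta_n=(1+2\gamma a_n)-2\gamma a_n^f$. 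This isolates the ``subgradient direction'' $2a_n^f x_n-u_n^f$, which plays the role of (minus) a classical subgradient of $f$ at $x_n$, with $\gamma/\beta_n$ as the effective stepsize.

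Next, since $x\in C$ and $C$ is convex, the projection $\Proj_C$ is nonexpansive, hence $\Vert x-x_{n+1}\Vert^2\le\Vert x-z_n\Vert^2$; as $\beta_n>0$ it then suffices to bound $\beta_n\Vert x-z_n\Vert^2$ from above. Writing $x-z_n=(x-x_n)-\frac{\gamma}{\beta_n}(2a_n^f x_n-u_n^f)$, expanding the square and multiplying through by $\beta_n$ yields
\[
\beta_n\Vert x-z_n\Vert^2=\beta_n\Vert x-x_n\Vert^2-2\gamma\bigl\langle 2a_n^f x_n-u_n^f,\,x-x_n\bigr\rangle+\frac{\gamma^2}{\beta_n}\bigl\Vert 2a_n^f x_n-u_n^f\bigr\Vert^2.
\]
Using $\beta_n=(1+2\gamma a_n)-2\gamma a_n^f$ again, the first term equals $(1+2\gamma a_n)\Vert x-x_n\Vert^2-2\gamma a_n^f\Vert x-x_n\Vert^2$, and the last term is already the quadratic remainder in \eqref{eq: PSG final norm}.

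It remains to control $-a_n^f\Vert x-x_n\Vert^2-\langle 2a_n^f x_n-u_n^f,\,x-x_n\rangle$. Expanding the inner products, the $\langle x,x_n\rangle$ contributions cancel and this expression collapses exactly to $-a_n^f(\Vert x\Vert^2-\Vert x_n\Vert^2)+\langle u_n^f,x-x_n\rangle$, which is $\le f(x)-f(x_n)$ by Definition \ref{def:Phi-subdiff} applied to $(a_n^f,u_n^f)\in\partial_{lsc}^\R f(x_n)$ with $y=x$. Multiplying by $2\gamma>0$ and substituting into the previous display gives
\[
\beta_n\Vert x-x_{n+1}\Vert^2\le(1+2\gamma a_n)\Vert x-x_n\Vert^2+2\gamma\bigl[f(x)-f(x_n)\bigr]+\frac{\gamma^2}{\beta_n}\bigl\Vert 2a_n^f x_n-u_n^f\bigr\Vert^2,
\]
which is precisely \eqref{eq: PSG final norm}.

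All steps are elementary, and there is no real obstacle; the only point requiring attention is the sign bookkeeping — one uses $\beta_n>0$ both to preserve the projection inequality after multiplying and to keep the coefficient of $\Vert 2a_n^f x_n-u_n^f\Vert^2$ positive, and $\gamma>0$ to preserve the direction of the subgradient inequality. In particular no convexity of $f$ is invoked anywhere: the estimate rests solely on the global $\lsc^\R$-subgradient inequality at $x_n$ and on the convexity of the constraint set $C$, which is exactly why this mirrors the classical projected-subgradient descent estimate.
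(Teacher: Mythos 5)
Your proof is correct and follows essentially the same route as the paper: nonexpansiveness of $\Proj_C$, rewriting the projected point as $x_n+\tfrac{\gamma}{1+2\gamma(a_n-a_n^f)}(2a_n^f x_n-u_n^f)$, expanding the square, and absorbing the cross term via the $\lsc^\R$-subgradient inequality at $x_n$ (the identity $-a_n^f\Vert x-x_n\Vert^2-\langle 2a_n^f x_n-u_n^f,x-x_n\rangle=-a_n^f(\Vert x\Vert^2-\Vert x_n\Vert^2)+\langle u_n^f,x-x_n\rangle$ is exactly the simplification the paper performs). The only cosmetic difference is that you multiply by $1+2\gamma(a_n-a_n^f)$ at the start rather than at the end, which changes nothing of substance.
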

\begin{proof}
Let $(x_n)_{n\in\N}$ be a sequence from \eqref{eq: projected subgradient explicit form} and $x\in C$ so that $x= \Proj_C(x)$. We consider
\begin{equation*}
    \Vert x-x_{n+1} \Vert^2 = \left\Vert \Proj_C (x) - \Proj_C \left( \frac{(1+2\gamma a_n) x_n - \gamma u^f_n}{1+ 2\gamma (a_n-a^f_n)} \right) \right\Vert^2 .
\end{equation*}
Since $C$ is closed and convex, the projection operator $\Proj_C$ is nonexpansive. Together with \eqref{eq: J-1 J-f}, we continue the above equality

\begin{align}
    \Vert x-x_{n+1} \Vert^2 &\leq \left\Vert x - \frac{(1+2\gamma a_n )x_n - \gamma u_n^f}{1+ 2\gamma (a_n -a_n^f) } \right\Vert^2 = \left\Vert x - x_n -\frac{2\gamma a_n^f x_n - \gamma u_n^f}{1+ 2\gamma (a_n -a_n^f) } \right\Vert^2 \nonumber\\
    & =\left\Vert x-x_{n}\right\Vert ^{2}+\frac{\gamma^{2}}{\left(1+2\gamma\left(a_{n}-a_{n}^{f}\right)\right)^{2}}\left\Vert 2a_{n}^{f}x_{n}-u_{n}^{f}\right\Vert ^{2} \nonumber\\
     & +\frac{2\gamma}{1+2\gamma\left(a_{n}-a_{n}^{f}\right)}\left\langle x-x_{n},u_{n}^{f}-2a_{n}^{f}x_{n}\right\rangle. \nonumber
\end{align}
By using the definition of $\lsc^\R$-subdifferentials, we obtain
\begin{align}
     \Vert x-x_{n+1} \Vert^2 & \leq\left\Vert x-x_{n}\right\Vert ^{2}+\frac{\gamma^{2}}{\left(1+2\gamma\left(a_{n}-a_{n}^{f}\right)\right)^{2}}\left\Vert 2a_{n}^{f}x_{n}-u_{n}^{f}\right\Vert ^{2} \nonumber\\
     & +\frac{2\gamma}{1+2\gamma\left(a_{n}-a_{n}^{f}\right)}\left[f\left(x\right)-f\left(x_{n}\right)+a_{n}^f\left(\left\Vert x\right\Vert ^{2}-\left\Vert x_{n}\right\Vert ^{2}\right)-2a_{n}^{f}\left\langle x-x_{n},x_{n}\right\rangle \right] \nonumber\\
     & =\left(1+\frac{2\gamma a_{n}^{f}}{1+2\gamma\left(a_{n}-a_{n}^{f}\right)}\right)\left\Vert x-x_{n}\right\Vert ^{2}+\frac{\gamma^{2}}{\left(1+2\gamma\left(a_{n}-a_{n}^{f}\right)\right)^{2}}\left\Vert 2a_{n}^{f}x_{n}-u_{n}^{f}\right\Vert ^{2} \nonumber\\
     & +\frac{2\gamma}{1+2\gamma\left(a_{n}-a_{n}^{f}\right)}\left[f\left(x\right)-f\left(x_{n}\right)\right]. \label{eq: PSG norm estimation}
\end{align}
From the assumptiom, $1+2\gamma (a_n -a_n^f)>0$, we can further simplify \eqref{eq: PSG norm estimation} to obtain \eqref{eq: PSG final norm}.
\end{proof}
From Lemma \ref{lem: PSG estimation-1}, we notice that $\left(1+\frac{2\gamma a_{n}^{f}}{1+2\gamma\left(a_{n}-a_{n}^{f}\right)}\right) \geq 0$ as $2\gamma (a_n-a_n^f)> -1$ from the assumption. 
When $a_n^f \leq 0$, then $\frac{1+2\gamma a_n}{1+ 2\gamma (a_n -a_n^f)} \leq 1$, from \eqref{eq: PSG final norm}, we obtain
\begin{align}
    \Vert x-x_{n+1} \Vert^2 & \leq  \frac{1+2\gamma a_n}{1+2\gamma (a_n-a_n^f)} \left\Vert x-x_{n}\right\Vert ^{2} +\left(\frac{\gamma}{1+2\gamma\left(a_{n}-a_{n}^{f}\right)} \right)^2 \left\Vert 2a_{n}^{f}x_{n}-u_{n}^{f}\right\Vert ^{2} \nonumber\\
    & +\frac{2\gamma}{1+2\gamma\left(a_{n}-a_{n}^{f}\right)} \left[ f(x) -f(x_n)\right] \nonumber\\
    &\leq \left\Vert x-x_{n}\right\Vert ^{2} +\left(\frac{\gamma}{1+2\gamma\left(a_{n}-a_{n}^{f}\right)} \right)^2 \left\Vert 2a_{n}^{f}x_{n}-u_{n}^{f}\right\Vert ^{2}  +\frac{2\gamma  \left[ f(x) -f(x_n)\right]}{1+2\gamma\left(a_{n}-a_{n}^{f}\right)}.
    \label{eq: PSG norm estimation with f}
\end{align}
The above expression is similar to the ones obtained for the class of subgradient descent algorithms in the convex case \cite{boyd2003subgradient}. Let us denote $S$ is the set of minimiser of problem \eqref{prob: constrainted prob}. To proceed with the convergence analysis, we assume the followings.

\begin{assumption}
\label{ass: PSG}
    For $n\in\N,(a_n^f,u_n^f) \in \partial_{lsc}^\R  f(x_n)$. Let us consider the following:
\begin{enumerate}[label=(\roman*)]
    \item There exists a constant $U>0$ such that $\Vert 2a_n^f x_n -u_n^f\Vert \leq U$ for all $n\in\N$.
    \item For $\gamma>0$,  $\sum_{n\in\N} \frac{1}{\left[ 1+2\gamma\left(a_{n}-a_{n}^{f}\right) \right]^2}  <+\infty$ and $\sum_{n\in\N} \frac{1}{ 1+2\gamma\left(a_{n}-a_{n}^{f}\right)}  =+\infty$ where\\
    ${(a_n, (\frac{1}{\gamma}+2a_n) x_n) \in J_\gamma (x_n)}$. 
    \item For $(\gamma_n)_{n\in\N}$ positive, $\sum_{n\in\N} \frac{\gamma_n^2}{ 1+2\gamma_n (a_n-a_n^f) }  <+\infty$ and $\sum_{n\in\N} \gamma_n =+\infty$ where\\
    ${(a_n, (\frac{1}{\gamma_n}+2a_n) x_n) \in J_{\gamma_n} (x_n)}$.
\end{enumerate} 
\end{assumption}

Assumption \ref{ass: PSG}-(i) ensures that the quantity $\Vert 2a_n^f x_n -u_n^f\Vert$ is uniformly bounded. 
Assumption \ref{ass: PSG}-(ii) with fixed stepsize is related to the convex subgradient method with square summable but not summable stepsize \cite{alber1998projected}. This ensures the weak convergence of the sequence and $\lim_{n\to\infty} f(x_n) = f(x^*)$. However, in our general setting of $\lsc^\R$-subdiferentials, we obtain convergence results with Assumption \ref{ass: PSG}-(ii) only when $a_n^f \leq 0$ for all $n\in\N, (a_n^f,u_n^f) \in \partial_{lsc}^\R  f(x_n)$. 
\begin{proposition}
\label{prop: PSG convex}
Let $f:X\to (-\infty,+\infty]$ be a proper $\lsc^\R$-convex function and $C$ be a closed convex subset of $X$. Let $\left( x_n\right)_{n\in\N}$ be a sequence generated by \eqref{eq: projected subgradient explicit form} with stepsize $\gamma>0$. Let Assumption \ref{ass: PSG}-(i,ii) hold and $a_n^f \leq 0$ for all $n\in\N, (a_n^f,u_n^f) \in \partial_{lsc}^\R  f(x_n)$. Consider a point $x^*\in S$, then the followings hold
\begin{enumerate}[label=(\roman*)]
    \item $\left\Vert x^{*}-x_{n}\right\Vert ^{2}$
    converges and is bounded.
    \item $ \mathrm{dist}^{2}\left(x_{n},S\right)$ is decreasing and converges.
    \item $\lim_{n\to\infty} f(x_n) = f(x^*)$
\end{enumerate}
\end{proposition}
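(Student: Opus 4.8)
The plan is to recast \eqref{eq: projected subgradient explicit form} as a quasi-Fej\'er recursion and feed it into Lemma \ref{lem: monotone sequence notnorm}. Fix $x^{*}\in S$ and abbreviate $c_{n}=1+2\gamma(a_{n}-a_{n}^{f})>0$. Since $a_{n}^{f}\leq 0$ for every $n$, the refined estimate \eqref{eq: PSG norm estimation with f} of Lemma \ref{lem: PSG estimation-1} applies, and because $x_{n}\in C$ while $x^{*}$ minimises $f$ over $C$ we have $f(x_{n})-f(x^{*})\geq 0$. Setting
\[
\alpha_{n}=\Vert x^{*}-x_{n}\Vert^{2},\qquad \beta_{n}=\frac{2\gamma}{c_{n}}\bigl(f(x_{n})-f(x^{*})\bigr)\geq 0,\qquad \varepsilon_{n}=\Bigl(\frac{\gamma}{c_{n}}\Bigr)^{2}\bigl\Vert 2a_{n}^{f}x_{n}-u_{n}^{f}\bigr\Vert^{2},
\]
inequality \eqref{eq: PSG norm estimation with f} reads $\alpha_{n+1}\leq\alpha_{n}-\beta_{n}+\varepsilon_{n}$. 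Assumption \ref{ass: PSG}-(i) bounds $\varepsilon_{n}\leq\gamma^{2}U^{2}/c_{n}^{2}$, and the first summability requirement in Assumption \ref{ass: PSG}-(ii) makes $\sum_{n}\varepsilon_{n}<+\infty$.

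With this in hand I would apply Lemma \ref{lem: monotone sequence notnorm} with $\chi=1$: parts (i)--(ii) of that lemma say that $(\alpha_{n})_{n\in\N}$ is bounded and convergent, which is assertion (i), and that $\sum_{n}\beta_{n}<+\infty$, i.e. $\sum_{n}(f(x_{n})-f(x^{*}))/c_{n}<+\infty$. For assertion (ii) I would drop the nonnegative term $\beta_{n}$, getting $\Vert x^{*}-x_{n+1}\Vert^{2}\leq\Vert x^{*}-x_{n}\Vert^{2}+\varepsilon_{n}$ for every $x^{*}\in S$, and take the infimum over $S$ to obtain $\mathrm{dist}^{2}(x_{n+1},S)\leq\mathrm{dist}^{2}(x_{n},S)+\varepsilon_{n}$; Lemma \ref{lem: monotone sequence notnorm} (again with $\chi=1$ and the $\beta$-term set to $0$) then gives convergence of $\mathrm{dist}^{2}(x_{n},S)$, and the shifted sequence $\mathrm{dist}^{2}(x_{n},S)+\sum_{k\geq n}\varepsilon_{k}$ is the nonincreasing one behind the monotonicity statement.

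For assertion (iii), the summability $\sum_{n}(f(x_{n})-f(x^{*}))/c_{n}<+\infty$ together with the divergence $\sum_{n}1/c_{n}=+\infty$ from Assumption \ref{ass: PSG}-(ii) and the nonnegativity of $f(x_{n})-f(x^{*})$ force $\liminf_{n}\bigl(f(x_{n})-f(x^{*})\bigr)=0$, since if $f(x_{n})-f(x^{*})\geq\rho>0$ held for all large $n$ the series would diverge. Upgrading this $\liminf$ to a genuine limit is the step I expect to be the main obstacle. To close it I would exploit that, when $a_{n}^{f}\leq 0$, the vector $u_{n}^{f}-2a_{n}^{f}x_{n}$ is an ordinary convex-analysis subgradient of $f$ at $x_{n}$ and \eqref{eq: projected subgradient explicit form} is exactly the classical projected subgradient step $x_{n+1}=\Proj_{C}\bigl(x_{n}-s_{n}(u_{n}^{f}-2a_{n}^{f}x_{n})\bigr)$ with stepsizes $s_{n}=\gamma/c_{n}$ obeying $\sum_{n}s_{n}=+\infty$, $\sum_{n}s_{n}^{2}<+\infty$ and with subgradients uniformly bounded by Assumption \ref{ass: PSG}-(i); the convergence analysis of that method (cf. \cite{alber1998projected}), combined with the boundedness of $(x_{n})_{n\in\N}$ and the convergence of $\Vert x^{*}-x_{n}\Vert$ for every $x^{*}\in S$ established above, then yields $\lim_{n}f(x_{n})=f(x^{*})$.
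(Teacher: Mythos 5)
Your treatment of (i) and (ii) is essentially the paper's own argument: starting from \eqref{eq: PSG norm estimation with f}, you bound the error term by $\gamma^{2}U^{2}/c_{n}^{2}$ via Assumption \ref{ass: PSG}-(i) (with $c_{n}=1+2\gamma(a_{n}-a_{n}^{f})$), use the first part of Assumption \ref{ass: PSG}-(ii) for summability, and feed the quasi-Fej\'er recursion into Lemma \ref{lem: monotone sequence notnorm}; your remark that the ``decreasing'' claim in (ii) really concerns the shifted quantity $\mathrm{dist}^{2}(x_{n},S)+\sum_{k\geq n}\varepsilon_{k}$ is a more precise reading than the paper's one-line appeal to the lemma. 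The only real divergence is in (iii). The paper sums \eqref{eq: PSG norm estimation with f}, obtains $\sum_{n}c_{n}^{-1}\bigl(f(x_{n})-f(x^{*})\bigr)<+\infty$, and then asserts the full limit directly from $\sum_{n}c_{n}^{-1}=+\infty$ and nonnegativity --- an inference which, as written, only yields $\liminf_{n}f(x_{n})=f(x^{*})$. You reach the same liminf and correctly flag the upgrade to a limit as the true obstacle; your proposed reduction is valid: when $a_{n}^{f}\leq 0$ the vector $u_{n}^{f}-2a_{n}^{f}x_{n}$ satisfies the global convex subgradient inequality at $x_{n}$ (the discarded term $-a_{n}^{f}\Vert y-x_{n}\Vert^{2}$ is nonnegative), and \eqref{eq: projected subgradient explicit form} is exactly $x_{n+1}=\Proj_{C}\bigl(x_{n}-s_{n}(u_{n}^{f}-2a_{n}^{f}x_{n})\bigr)$ with $s_{n}=\gamma/c_{n}$ square-summable, nonsummable, and $\Vert u_{n}^{f}-2a_{n}^{f}x_{n}\Vert\leq U$. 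Be aware, however, that the result you lean on from \cite{alber1998projected} (the same one the paper invokes in Theorem \ref{thm: PSG general case convergence}) is itself only a liminf statement, so to genuinely close (iii) you should spell out the standard excursion argument: by nonexpansiveness of $\Proj_{C}$ and the subgradient inequality, $\vert f(x_{n+1})-f(x_{n})\vert\leq U\Vert x_{n+1}-x_{n}\Vert\leq U^{2}s_{n}\to 0$, so if $\limsup_{n}\bigl(f(x_{n})-f(x^{*})\bigr)>\epsilon$ there would be infinitely many excursions of $f(x_{n})-f(x^{*})$ from below $\epsilon/2$ to above $\epsilon$, each forcing $\sum s_{n}$ over the excursion to exceed $\epsilon/(2U^{2})$ and hence contributing at least $\epsilon^{2}/(4U^{2})$ to $\sum_{n}s_{n}\bigl(f(x_{n})-f(x^{*})\bigr)$, contradicting its summability. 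With that paragraph made explicit your proof is complete, and on point (iii) it is in fact more careful than the paper's own argument.
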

\begin{proof}
Since $a_n^f \leq 0$ for all $n\in\N$, estimate \eqref{eq: PSG norm estimation with f} holds. Taking $x^*\in S$, we have $f(x^*) \leq f(x_n)$ for all $n\in\N$, and from \eqref{eq: PSG norm estimation with f}, we obtain 
\begin{align}
    \Vert x^*-x_{n+1} \Vert^2 &\leq \left\Vert x^*-x_{n}\right\Vert ^{2} +\left(\frac{\gamma}{1+2\gamma\left(a_{n}-a_{n}^{f}\right)} \right)^2 \left\Vert 2a_{n}^{f}x_{n}-u_{n}^{f}\right\Vert ^{2} \nonumber\\
    & +\frac{2\gamma}{1+2\gamma\left(a_{n}-a_{n}^{f}\right)} \left[ f(x^*) -f(x_n)\right] \nonumber\\
    &\leq \left\Vert x^*-x_{n}\right\Vert ^{2} +\left(\frac{\gamma}{1+2\gamma\left(a_{n}-a_{n}^{f}\right)} \right)^2 U^2
    \label{eq: PSG norm estimation with f af<0}
\end{align}
By Assumption \ref{ass: PSG}-(ii), the last term on RHS of \eqref{eq: PSG norm estimation with f af<0} is summable. By Lemma \ref{lem: monotone sequence notnorm}, the assertions \textit{(i)} and \textit{(ii)} are proved.

For \textit{(iii)}, we consider \eqref{eq: PSG norm estimation with f} again and by taking the finite sum till $N\in\N$, we obtain
\begin{align}
    &\sum_{n=0}^N \frac{2\gamma}{1+2\gamma\left(a_{n}-a_{n}^{f}\right)} \left[ f(x_n)- f(x^*) \right] \nonumber\\
    &\leq \sum_{n=0}^N \left[\left\Vert x^*-x_{n}\right\Vert ^{2} -\Vert x^*-x_{n+1} \Vert^2\right] +\sum_{n=0}^N \left(\frac{\gamma}{1+2\gamma\left(a_{n}-a_{n}^{f}\right)} \right)^2 U^2 \nonumber\\
    & =\left[\left\Vert x^*-x_{0}\right\Vert ^{2} -\Vert x^*-x_{N+1} \Vert^2\right] +\sum_{n=0}^N \left(\frac{\gamma}{1+2\gamma\left(a_{n}-a_{n}^{f}\right)} \right)^2 U^2 \nonumber\\
    & \leq \left\Vert x^*-x_{0}\right\Vert ^{2} +\sum_{n=0}^N \left(\frac{\gamma}{1+2\gamma\left(a_{n}-a_{n}^{f}\right)} \right)^2 U^2,
\end{align}
Letting $N\to\infty$ and using Assumption \ref{ass: PSG}-(ii), we obtain
\begin{equation*}
    \sum_{n=0}^\infty \frac{2\gamma}{1+2\gamma\left(a_{n}-a_{n}^{f}\right)} \left[ f(x_n)- f(x^*) \right] <+\infty.
\end{equation*}
Since $\sum_{n=0}^\infty \frac{2\gamma}{1+2\gamma\left(a_{n}-a_{n}^{f}\right)} = +\infty$, and $f(x_n)\geq f(x^*)$ for all $n\in\N$, we infer
\begin{equation}
    \lim_{n\to\infty} f(x_n) -f(x^*) =0.
\end{equation}
\end{proof}
Observe that we require $a_n^f \leq 0$ in order to obtain the convergence results. This is different to the previous Sections, where a part of the information of the next iterate is known i.e. $a_{n+1}$ from $J_\gamma (x_{n+1})$.
The results obtained in Proposition \ref{prop: PSG convex} are also similar to \cite[Proposition 1 and Lemma 1]{alber1998projected}. In Proposition \ref{prop: PSG convex}, we keep the stepsize $\gamma$ constant so the adaptivity is transfered from the stepsize to $a_n$ in $J_\gamma (x_n)$. 

In general, we do not know the sign of $a_n^f$ for  $n\in\N$. Hence, to obtain convergence results for \eqref{eq: projected subgradient}, we need to restrict the next element $a_{n+1}$ in $J_\gamma (x_{n+1})$. The condition on $a_{n+1}$ can be relaxed by changing the stepsize $\gamma_n$ instead of keeping it constant.

\begin{theorem}
\label{thm: PSG general case convergence}
Let $f:X\to (-\infty,+\infty]$ be proper $\lsc^\R$-convex and $C$ be a nonempty closed convex set. Let $\left( x_n\right)_{n\in\N}$ be a sequence generated by \eqref{eq: projected subgradient explicit form} with positive stepsize $(\gamma_n)_{n\in\N}$. Assume that Assumption \ref{ass: PSG}-(i) and (iii) hold with
\begin{equation}
\label{ass: an - anf an+1}
(\forall n\in\N) \qquad    
2\gamma_n (a_n -a_n^f) \geq 2\gamma_{n+1} a_{n+1},
\end{equation}
where $\left(a_{n},(1/\gamma_n+2a_n)x_n \right)\in J_{\gamma_n} \left(x_{n}\right),(a_n^f,u_n^f) \in \partial_{lsc}^\R  f(x_n)$ and $2\gamma_n (a_n-a_n^f)>-1$. Consider a point $x^*\in S$. Then 
\begin{itemize}
    \item Theorem \ref{thm: general convergence results}-(i,ii) hold for $\alpha_n =1+2\gamma_n a_n, \beta_n = 0$ and $\displaystyle{\liminf_{n\to\infty} f(x_n)-f(x^*) =0}$.
    \item If $\alpha_n$ is non-decreasing then we have Theorem \ref{thm: general convergence results}-(v,vi). 
\end{itemize}
\end{theorem}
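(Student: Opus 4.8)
The plan is to convert the one–step estimate of Lemma~\ref{lem: PSG estimation-1} (which holds verbatim with the constant $\gamma$ replaced by the variable step $\gamma_n$, since its proof only uses the nonexpansiveness of $\Proj_C$ and the definition of the $\lsc^\R$-subdifferential at step $n$) into a quasi-Fej\'er recursion for a suitably weighted iterate, and then to invoke the numerical Lemma~\ref{lem: monotone sequence notnorm}. First I would fix $x^\ast\in S\subseteq C$. Since $f$ is constant on $S$ we have $f(x^\ast)\le f(x_n)$ for every $n$, and Assumption~\ref{ass: PSG}-(i) gives $\|2a_n^f x_n-u_n^f\|\le U$; substituting $x=x^\ast$ in Lemma~\ref{lem: PSG estimation-1} therefore yields
\[
\bigl(1+2\gamma_n(a_n-a_n^f)\bigr)\,\|x^\ast-x_{n+1}\|^2 \le (1+2\gamma_n a_n)\,\|x^\ast-x_n\|^2 - 2\gamma_n\bigl[f(x_n)-f(x^\ast)\bigr] + \varepsilon_n ,
\]
where $\varepsilon_n:=\gamma_n^2 U^2/\bigl(1+2\gamma_n(a_n-a_n^f)\bigr)\ge0$ is summable by Assumption~\ref{ass: PSG}-(iii).

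The key step is then to use the structural hypothesis \eqref{ass: an - anf an+1}: it says exactly that $1+2\gamma_n(a_n-a_n^f)\ge 1+2\gamma_{n+1}a_{n+1}=:\alpha_{n+1}\ge0$, so, since $\|x^\ast-x_{n+1}\|^2\ge0$, the left-hand side dominates $\alpha_{n+1}\|x^\ast-x_{n+1}\|^2$. Writing $\alpha_n=1+2\gamma_n a_n$ and $\beta_n=2\gamma_n[f(x_n)-f(x^\ast)]\ge0$ we arrive at $\alpha_{n+1}\|x^\ast-x_{n+1}\|^2\le \alpha_n\|x^\ast-x_n\|^2-\beta_n+\varepsilon_n$, and the crucial observation is that $\beta_n$ and $\varepsilon_n$ do not depend on which $x^\ast\in S$ is chosen. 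Applying Lemma~\ref{lem: monotone sequence notnorm} with $\chi=1$ to $\tilde\alpha_n=\alpha_n\|x^\ast-x_n\|^2$, $\tilde\beta_n=\beta_n$, $\tilde\varepsilon_n=\varepsilon_n$ gives convergence of $\bigl(\alpha_n\|x^\ast-x_n\|^2\bigr)_n$, which is assertion (i) of Theorem~\ref{thm: general convergence results}, together with $\sum_n 2\gamma_n[f(x_n)-f(x^\ast)]<+\infty$; combined with $\sum_n\gamma_n=+\infty$ from Assumption~\ref{ass: PSG}-(iii) and $f(x_n)\ge f(x^\ast)$, this forces $\liminf_{n\to\infty}\bigl(f(x_n)-f(x^\ast)\bigr)=0$. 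For assertion (ii), I would use $\mathrm{dist}^2(x_{n+1},S)\le\|x^\ast-x_{n+1}\|^2$ and take the infimum over $x^\ast\in S$ in the recursion (legitimate because $\beta_n,\varepsilon_n$ are $x^\ast$-independent), obtaining the same recursion for $\alpha_n\mathrm{dist}^2(x_n,S)$, hence its convergence by Lemma~\ref{lem: monotone sequence notnorm}. If in addition $(\alpha_n)_n$ is non-decreasing (and positive), I would discard the non-positive term $-\beta_n$, divide by $\alpha_{n+1}\ge\alpha_n$ to get the quasi-Fej\'er inequality $\|x^\ast-x_{n+1}\|^2\le\|x^\ast-x_n\|^2+\varepsilon_n/\alpha_{n+1}$ with $\sum_n\varepsilon_n/\alpha_{n+1}<+\infty$, and apply Lemma~\ref{lem: monotone sequence notnorm} once more to get (v), and (vi) by the same infimum argument.

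The main obstacle, and what makes this case genuinely different from the $\lsc^\R$-PPA and $\lsc^\R$-FB analyses of Theorems~\ref{thm:Fejer prox pt} and \ref{thm:Fejer FB}, is twofold. First, here one has no control whatsoever over the next coefficient $a_{n+1}$ beyond the single inequality \eqref{ass: an - anf an+1}; getting that inequality to do the work of putting the ``correct'' weight $\alpha_{n+1}$ (rather than $1+2\gamma_n(a_n-a_n^f)$) on the left-hand side is precisely the point, and is exactly what condition \eqref{ass: an - anf an+1} is designed for. Second, the recursion carries a genuinely non-vanishing perturbation $\varepsilon_n$, so one must use the quasi-Fej\'er Lemma~\ref{lem: monotone sequence notnorm} rather than the pure Fej\'er reasoning underlying Theorem~\ref{thm: general convergence results}; consequently one only obtains $\liminf$ (not $\lim$) for the objective values, and the monotonicity in (ii) and (vi) holds only up to the summable tail $\sum_{k\ge n}\varepsilon_k$.
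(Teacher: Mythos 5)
Your proposal is correct and follows essentially the same route as the paper: substitute $x=x^*$ into Lemma \ref{lem: PSG estimation-1} (with $\gamma_n$ in place of $\gamma$), use \eqref{ass: an - anf an+1} together with $2\gamma_{n+1}a_{n+1}\geq -1$ to put the weight $\alpha_{n+1}=1+2\gamma_{n+1}a_{n+1}$ on the left, bound the perturbation by the summable $\gamma_n^2U^2/(1+2\gamma_n(a_n-a_n^f))$ via Assumption \ref{ass: PSG}-(i,iii), apply Lemma \ref{lem: monotone sequence notnorm}, obtain the $\liminf$ from summability of $\gamma_n[f(x_n)-f(x^*)]$ against $\sum_n\gamma_n=+\infty$, and divide by $\alpha_{n+1}$ when $(\alpha_n)$ is non-decreasing for (v)--(vi). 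The only cosmetic difference is that you keep the term $2\gamma_n[f(x_n)-f(x^*)]$ inside the quasi-Fej\'er recursion (getting its summability directly from Lemma \ref{lem: monotone sequence notnorm}-(ii)) rather than dropping it and re-deriving summability as in Proposition \ref{prop: PSG convex}, and your remark that monotonicity in (ii)/(vi) only holds up to the summable perturbation is an accurate caveat.
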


\begin{proof}
Let us take estimate \eqref{eq: PSG final norm} in Lemma \ref{lem: PSG estimation-1} with $x=x^*\in S$ with assumption \eqref{ass: an - anf an+1} to obtain 
\begin{align}
    (1+2\gamma_{n+1} a_{n+1}) \Vert x^*-x_{n+1} \Vert^2 &\leq  \left(1+2\gamma_n a_n\right) \left\Vert x^*-x_{n}\right\Vert ^{2}+\frac{\gamma_n^{2}}{1+2\gamma_n\left(a_{n}-a_{n}^{f}\right)} \left\Vert 2a_{n}^{f}x_{n}-u_{n}^{f}\right\Vert ^{2} \nonumber\\
     & +2\gamma_n \left[f\left(x^*\right)-f\left(x_{n}\right)\right]\nonumber\\
     & \leq \left(1+2\gamma_n a_n\right) \left\Vert x^*-x_{n}\right\Vert ^{2}+ \frac{\gamma_n^{2} U^2}{1+2\gamma_n\left(a_{n}-a_{n}^{f}\right)}
     \label{eq: PSG covergence theorem proof-1}
\end{align}
Since the last term on the right hand side is summable, Lemma \ref{lem: monotone sequence notnorm} gives us statement (i)-(ii). For (iii), the proof follows along the same steps as in Proposition \ref{prop: PSG convex} combined with Assumption 1-(iii).
Additionally, when $\alpha_n$ is non-decreasing, we divide \eqref{eq: PSG covergence theorem proof-1} by $1+2\gamma_{n+1} a_{n+1}$ and obtain
\begin{align*}
    \Vert x^*-x_{n+1} \Vert^2 &\leq \frac{1+2\gamma_n a_n}{1+2\gamma_{n+1} a_{n+1}} \left\Vert x^*-x_{n}\right\Vert ^{2}+\frac{\gamma_n^{2} U^2}{(1+2\gamma_{n+1} a_{n+1})(1+2\gamma_n (a_n-a_n^f))}\\ 
    &\leq \left\Vert x^*-x_{n}\right\Vert ^{2}+\frac{\gamma_n^{2}}{1+2\gamma_n (a_n-a_n^f)} \frac{U^2}{1+2\gamma_{0} a_{0}} .
\end{align*}
Combining the above inequality, which is analogous to inequality \eqref{eq: PSG norm estimation with f af<0} of Proposition \ref{prop: PSG convex}, with Assumption \ref{ass: PSG}-(iii), we infer (iv) and (v).
Lastly, we have that 
\[ \liminf_{n\to\infty} f(x_n)-f(x^*) =0,\]
by following the same argument as in \cite[Proposition 2-(i)]{alber1998projected}. 
\end{proof}
\begin{remark}
\label{rmk: const gamma}
When $\gamma$ is kept constant, the restriction \eqref{ass: an - anf an+1} becomes $a_n-a_n^f \geq a_{n+1}$. Combining this with $2\gamma (a_n-a_n^f) >-1$ can cause early stopping when $a_n^f \geq -1/2\gamma$. 
Because
\[
-\frac{1}{2\gamma} \leq a_{n+1}\leq a_n -a_n^f \leq a_0 -\sum_{i=0}^n a_i^f,
\]
so we need $a_0$ big enough to ensure the algorithm converges.
\end{remark}
\begin{remark}
    Instead of taking $x^*\in S$ as the global minimzer, we can choose a point in the set
    \begin{equation}
        \overline{S} :=\left\{ x\in C: f(x) \leq f(x_n), \forall n\in\N \right\}.
    \end{equation}
    Then Proposition \ref{prop: lsc prox projection} and Theorem \ref{thm: PSG general case convergence} still hold for $x^*\in \overline{S}$.
\end{remark}
\section{Numerical Examples}
\label{sec: numerical examples}
In this section, we give some numerical examples for the different algorithm that are analyzed in this paper.

\paragraph{$\lsc^\R$-Proximal Point Algorithm:} We continue Example~\ref{ex:strongly convex function} to illustrate the performance of the $\lsc^\R$-proximal point Algorithm works. We use the starting point $x_0 = -10$ $a_0 =1$ and fix the number of iteration to $N=101$. Thanks to the closed form of $\lsc^\R$-subgradient and $\lsc^\R$-proximal operator of $f$, we can set the update rule for $a_n$ to
\begin{equation*}
    a_{n+1} = a_n +0.9, 
\end{equation*}
which satisfies the condition $a_{n}-a_{n+1} \geq -1$ of $\partial_{lsc}^\R  f(x)$.
The function $f$ has a minimizer at $x=0$ with $f(0)=0$.
We test for multiple values of $\gamma=0.01, 0.1, 1, 10$ and plot the absolute value of the function value, the distance between two iterates and the distance of the iterate to the minimizer in Figure \ref{fig:plots-PPA}).  

\begin{figure}
    \centering
    \begin{tabular}{ccc}
        \makecell{\includegraphics[width=0.33\textwidth, trim ={15  0 0 0}, clip]{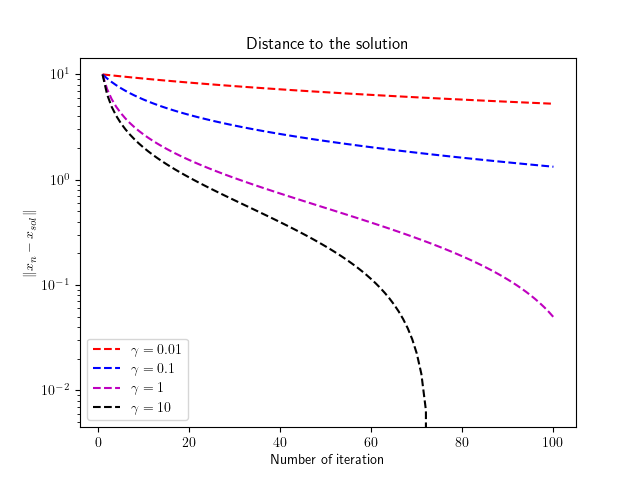}} & 
        \makecell{\includegraphics[width=0.33\textwidth, trim ={15 0 0 0}, clip]{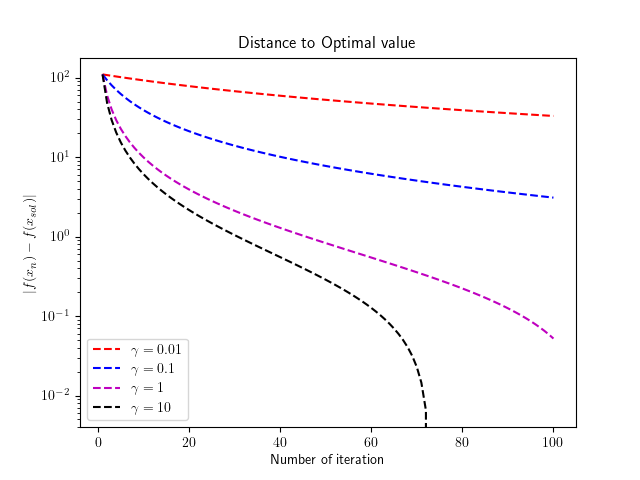}} & \makecell{\includegraphics[width=0.33\textwidth, trim ={15  0 0 0}, clip]{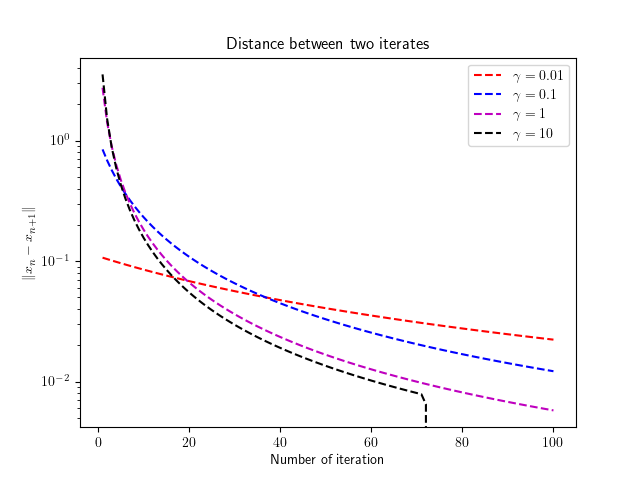}}\\
       \multicolumn{1}{c}{(a)}&  \multicolumn{1}{c}{(b)} & \multicolumn{1}{c}{(c)} 
    \end{tabular}
    \caption{From left to right: distance between the current iteration and the minimizer; distance to the optimal value; distance between two successive iterates.}
    \label{fig:plots-PPA}
\end{figure}

\paragraph{$\lsc^\R$-Projected Subgradient Algorithm:} We illustrate  Algorithm \ref{alg:PSG-example} by solving the following problem
\begin{equation}
\label{eq: quadratic problem}
\min_{x\in C} f(x) = \min_{x\in C} \left\langle x,Q x\right\rangle ,
\end{equation}
where $C = B(0,1)\subset\mathbb{R}^{n}$ is the unit ball and $Q\in\mathbb{R}^{n\times n}$
is a full rank symmetric matrix. We can the problem as
\[
\min_{x\in\mathbb{R}^{n}}\left\langle x,Qx\right\rangle +\iota_{C}\left(x\right),
\]
In Examples \ref{ex:1 J function} and \ref{ex: xQx}, we derived expressions for $J_\gamma,J_\gamma^{-1}$ and the $\lsc^\R$-subgradient of $f$.
We use~\eqref{eq: projected subgradient explicit form}  to state Algorithm \eqref{eq: projected subgradient} in this case as
\begin{align*}
x_{n+1} & =\Proj_{C}\left(\frac{\left[\left(\frac{1}{\gamma}+2a_{n}-2a_{n}^{f}\right)Id-2Q\right]x_{n}}{\frac{1}{\gamma}+2a_{n}-2a_{n}^{f}}\right) 
=\Proj_{C}\left(x_{n}-2\gamma\frac{Qx_{n}}{1+2\gamma a_{n}-2\gamma a_{n}^{f}}\right).
\end{align*}

\begin{example}
\label{ex: numerical ball}
In the case where $C=B(0,1)$, the problem \eqref{eq: quadratic problem} is equivalent to
\begin{equation}
\label{eq: quadratic diagonal problem}
\min_{y\in B(0,1)} \langle y, D y\rangle,
\end{equation}
where $y = Px$ and $D$ is a diagonal matrix of eigenvalues of $Q$ by using matrix decomposition $Q = P D P^{-1}$.
Thanks to Example~\ref{ex: xQx}, we have 
\[ \Vert y\Vert = \sqrt{\Vert y\Vert^2} = \sqrt{\langle Px, Px\rangle} = \Vert x\Vert \leq 1.\]
As $D$ is a diagonal matrix, problem \eqref{eq: quadratic diagonal problem} is
\begin{equation*}
    \min_{y\in B(0,1)} \sum_{i=0}^N d_i y_i^2,
\end{equation*}
where $d_i$ is the $i$-th element of diagonal matrix $D$.
This allows us to check the distance between the current iterate and the solution of \eqref{eq: quadratic diagonal problem}.
We consider two scenarios of Assumption~\ref{ass: PSG} where the stepsize is constant versus adaptive stepsize in order to show the early stopping in the former case. We give three different plots for each scenario: the optimal value of the objective function, the step length $|x_{n+1}-x_n|$ and the distance to the solution $|x_n -x^*|$. 
\begin{enumerate}
    \item We consider the matrix 
\[
Q=\begin{bmatrix}-2 & 2 & 2\\
2 & 2 & -2\\
2 & -2 & 2
\end{bmatrix},
\]
which has $\left(-4,2,4\right)$ as eigenvalues. Then for all the pair $(a_n^f,u_n^f)\in\partial_{lsc}^\R  f(x_n)$, $a_n^f\geq 4$. We fix the maximum number of iteration $N=101$, the starting point $x_0 = (-5,5,-5)$.
\paragraph{Constant Stepsize:} We take $a_n^f=4$ for all $n\in \N$. The initial value $a_0=200$ and $a_{n+1}=a_n-a_n^f$. The stopping criterion will be
\begin{equation*}
    a_{n+1} \leq a_{n+1}^f-\frac{1}{2\gamma}= 4-\frac{1}{2\gamma}.    
\end{equation*}
We test for multiple values of stepsize $\gamma = 0.01, 0.1, 1, 10$. 
Since $a_n^f$ is positive for all $n\in\N$, the sequence $a_n$ will be decreasing with the lower bound $a_n^f$. This is why we intentionally take a large intial $a_0$, otherwise, if $a_0$ is closed to $a_n^f$ then the algorithm may stop just after several iteration.

Figure \ref{fig:plots1-1} depicts the behaviors of the sequence $(x_n)_{n\in\N}$ and the function values.
As we can see, for all values of $\gamma$, $x_n$ and $f(x_n)$ tend to the optimal solution and optimal values respectively.  Observe that for $\gamma =0.01$ the algorithm takes more time to get to the optimal solution and optimal function value compare to the larger values of $\gamma$. On the other hand, since we fixed the total number of steps $N=101$, the algorithm stops before the number of iteration reach $N$. This behavior supports the observation mentioned in Remark \ref{rmk: const gamma}. 
\begin{figure}
    \centering
    \begin{tabular}{ccc}
        \makecell{\includegraphics[width=0.33\textwidth, trim ={15  0 0 0}, clip]{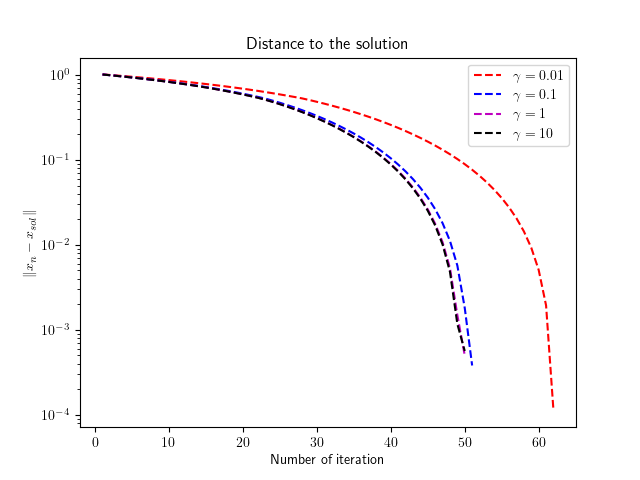}} & 
        \makecell{\includegraphics[width=0.33\textwidth, trim ={15  0 0 0}, clip]{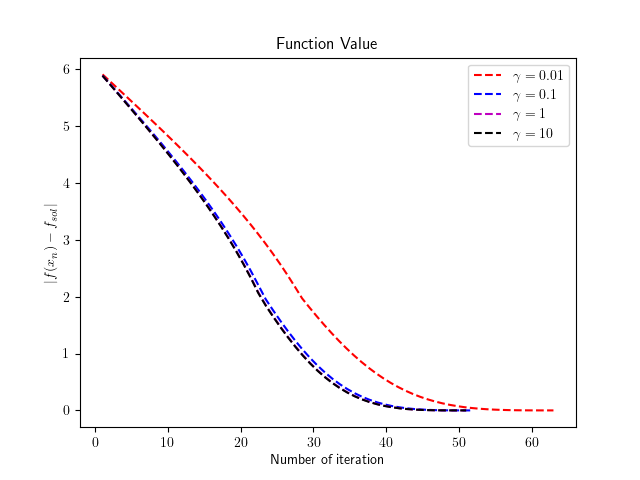}} & \makecell{\includegraphics[width=0.33\textwidth, trim ={15  0 0 0}, clip]{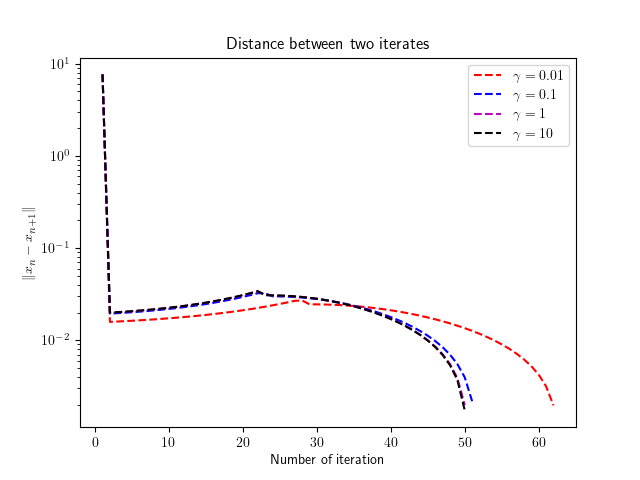}}\\
       \multicolumn{1}{c}{(a)}&  \multicolumn{1}{c}{(b)} & \multicolumn{1}{c}{(c)} 
    \end{tabular}
    \caption{Example~\ref{ex: numerical ball}, case 1, constant stepsize. From left to right: distance between the current iteration and the solution; distance to the optimal value; distance between two successive iterates.}
    \label{fig:plots1-1}
\end{figure}

\paragraph{Adaptive Stepsize:} With the same initial point and the same starting stepsize $\gamma_0$ as in the previous case, we fix $a_n^f=4$ for all $n\in\N$, and set $N=101$. We consider the following 
\begin{equation*}
    (\forall n\in\N)\ a_n = 5 ,\quad \gamma_{n+1} = \frac{\gamma_n}{a_{n+1}}(a_n-4),
\end{equation*}
with the stopping criterion $2\gamma_n (a_n-a_n^f) \leq -1$.
In this scenario, we can both fix $a_n$ and $a_n^f$ for all $n\in\N$. 
At each iteration, the value of the stepsize $\gamma_n$ varies preventing early stopping of the algorithm. The plots are illustrated in Figure \ref{fig:plots1-2}. The algorithm continue to the final iteration $N$ and converges for all starting values of $\gamma_0$. The number of iterations take to arrive at the limit is less than the case with constant stepsize. However, notice that $x_n$ converges to some point which is not the optimal solution of the problem. This happens as we keep $a_n,a_n^f$ to be constants and the starting $\gamma_0$ is small Hence $\gamma_n$ has to go to zero, so $x_n$ tends to stay at the same position. As we allow for bigger $\gamma_0$, the sequence $(x_n)_{n\in\N}$ move toward the solution.
\begin{figure}
    \centering
    \begin{tabular}{ccc}
        \makecell{\includegraphics[width=0.33\textwidth, trim ={15  0 0 0}, clip]{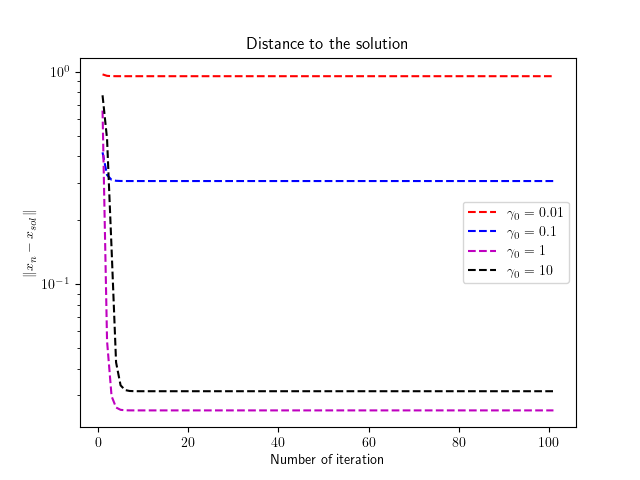}} & 
        \makecell{\includegraphics[width=0.33\textwidth, trim ={15  0 0 0}, clip]{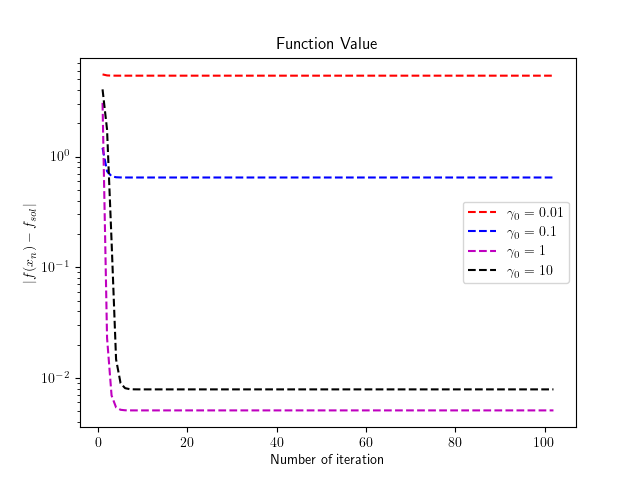}} & \makecell{\includegraphics[width=0.33\textwidth, trim ={15  0 0 0}, clip]{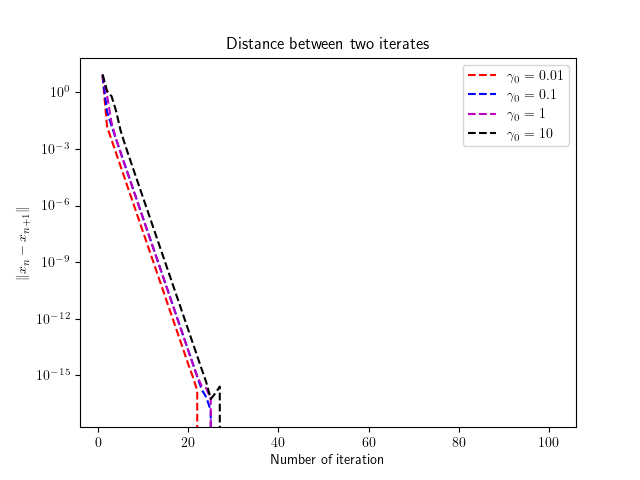}}\\
       \multicolumn{1}{c}{(a)}&  \multicolumn{1}{c}{(b)} & \multicolumn{1}{c}{(c)} 
    \end{tabular}
    \caption{Example~\ref{ex: numerical ball}, case 1, adaptive stepsize. From left to right: distance between the current iteration and the solution; function value at each iteration; distance between two successive iterates.}
    \label{fig:plots1-2}
\end{figure}

\item Now we consider a matrix with two negative eigenvalues, namely
\[
Q=\begin{bmatrix}
1 & 0 & -1 & 1 & 0\\
0 & 1 & 1 & -1 & 0\\
-1 & 1 & -1 & 1 & 1\\
1 & -1 & 1 & -1 & 1\\
0 & 0 & 1 & 1 & 1
\end{bmatrix}
\]
which has $(-3,-1,1,2,2)$ as eigenvalues. The lower bound for $a_n^f$ where $(a_n^f,u_n^f)\in\partial_{lsc}^\R  f(x_n)$, will be $a_n^f\geq 3$. We fix the maximum number of iterations to $N=101$. We also consider constant stepsize and adaptive stepsize with the same starting $\gamma_0$ as in the first case. However, since there are two negative eigenvalues this time, we consider two different starting point $x_{01} = (-10,-10,-10,-10,-10)$ and $x_{02} =(-10,10,-10,10,-10)$. 

\paragraph{Constant Stepsize:} We keep the same setting as in the first case with $a_n^f=3$ for all $n\in \N$. The initial value $a_0=200$ and $a_{n+1}=a_n-a_n^f$. The stopping criterion will be
\begin{equation*}
    a_{n+1} \leq a_{n+1}^f-\frac{1}{2\gamma}= 3-\frac{1}{2\gamma}.    
\end{equation*}
The plots for $x_{01},x_{02}$ are shows in Figure \ref{fig:plots1-201} and Figure \ref{fig:plots1-202} respectively.
\begin{figure}
    \centering
    \begin{tabular}{ccc}
        \makecell{\includegraphics[width=0.33\textwidth, trim ={15  0 0 0}, clip]{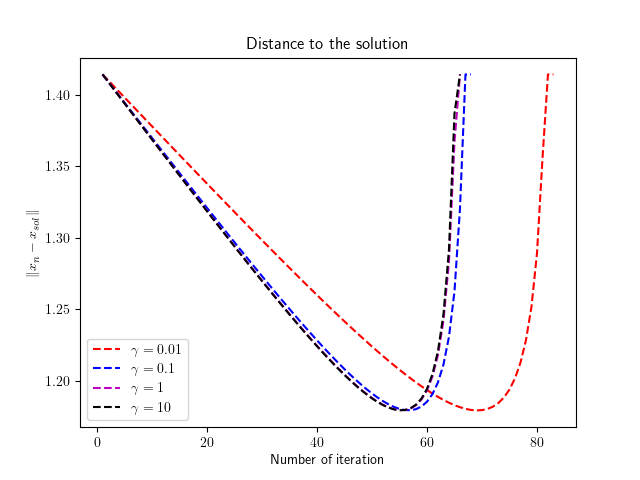}} & 
        \makecell{\includegraphics[width=0.33\textwidth, trim ={15  0 0 0}, clip]{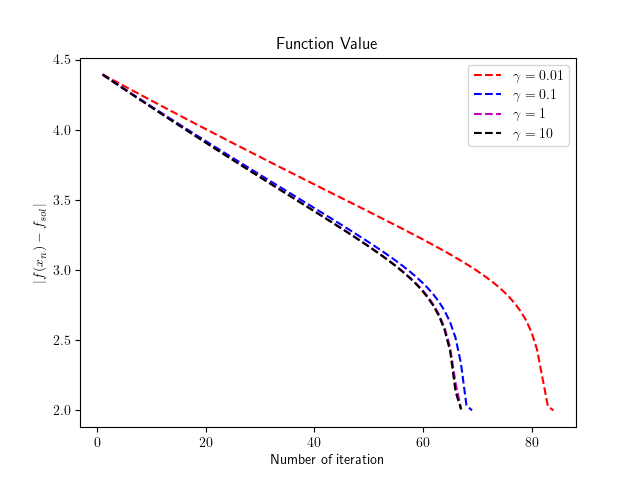}} & \makecell{\includegraphics[width=0.33\textwidth, trim ={15  0 0 0}, clip]{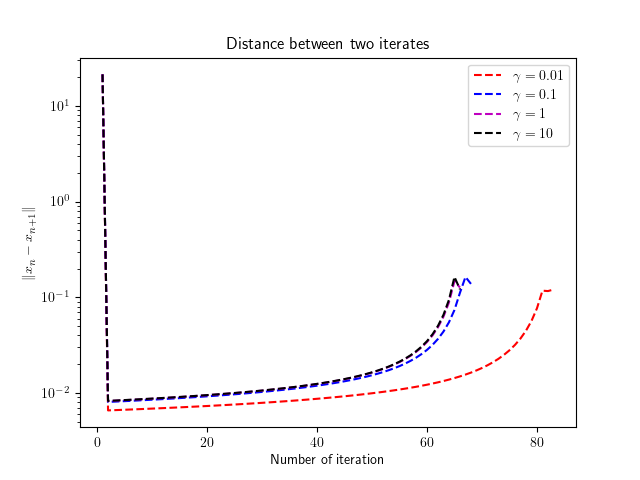}}\\
       \multicolumn{1}{c}{(a)}&  \multicolumn{1}{c}{(b)} & \multicolumn{1}{c}{(c)} 
    \end{tabular}
    \caption{Example~\ref{ex: numerical ball}, case 2, constant stepsize, initialization $x_{01}$. From left to right: distance between the current iteration and the solution; function value at each iteration; distance between two successive iterates.}
    \label{fig:plots1-201}
\end{figure}
\begin{figure}
    \centering
    \begin{tabular}{ccc}
        \makecell{\includegraphics[width=0.33\textwidth, trim ={15  0 0 0}, clip]{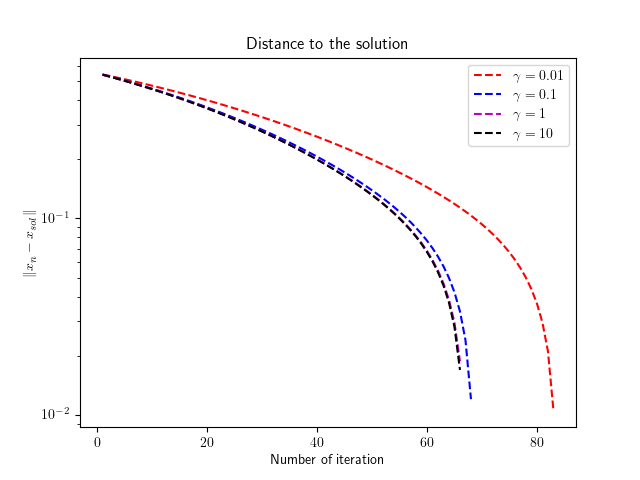}} & 
        \makecell{\includegraphics[width=0.33\textwidth, trim ={15  0 0 0}, clip]{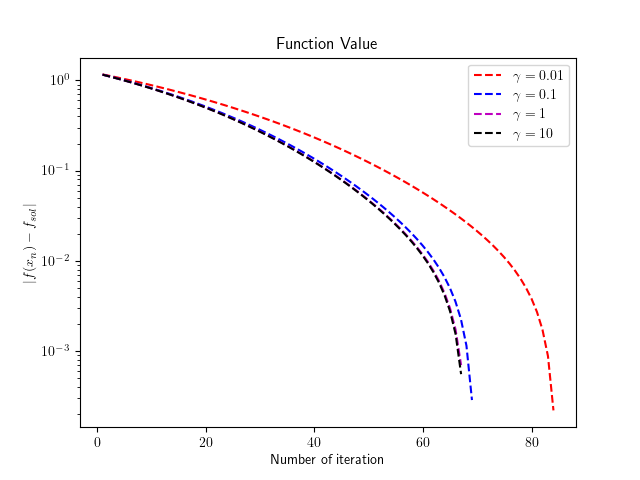}} & \makecell{\includegraphics[width=0.33\textwidth, trim ={15  0 0 0}, clip]{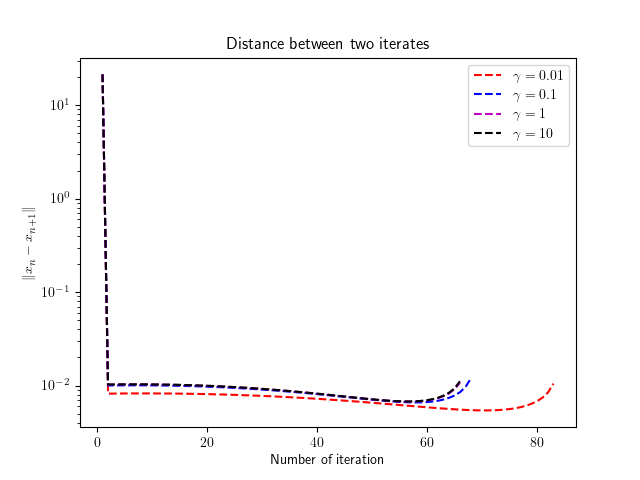}}\\
       \multicolumn{1}{c}{(a)}&  \multicolumn{1}{c}{(b)} & \multicolumn{1}{c}{(c)} 
    \end{tabular}
    \caption{Example~\ref{ex: numerical ball}, case 2, constant stepsize, initialization $x_{02}$. From left to right: distance between the current iteration and the solution; function value at each iteration; distance between two successive iterates.}
    \label{fig:plots1-202}
\end{figure}
Notice that in this case, we have two different negative eigenvalues. The algorithm will converges to the direction of the closest eigenvector with respect to the negative eigenvalue. Here, the starting point $x_{01}$ is actually closer to the eigenvector with respect to eigenvalue $-1$ while $x_{02}$ is closer to the solution with respect to the eigenvalue $-3$ which is our true solution. This explains the different behaviors of the two graphs in Figure \ref{fig:plots1-201} and Figure \ref{fig:plots1-202}.

\paragraph{Adaptive Stepsize:} We fix $a_n^f=3$, maximum iteration $N=101$. Compare to the previous case with adaptive stepsize, we use the following update with $\varepsilon=1$
\begin{equation*}
    a_0 = 4, \ a_{n} = -\frac{1}{2\gamma_n}+a_n^f+\varepsilon ,\quad \gamma_{n+1} = \frac{\gamma_n (a_{n}-a_n^f)+1}{a_n^f +\varepsilon}.
\end{equation*}
This rule ensures that $2\gamma_n (a_n-a_n^f) > -1$ for all $n\in\N$. We only run this case for starting point $x_{02}$, the plots are depicted in Figure \ref{fig:plots1-ada-202}. As both $\gamma_n$ and $a_n$ are changing, the sequence $(x_n)$ converges to the solution for all cases of $\gamma_0$ comparing to Figure \ref{fig:plots1-2}. It is obvious that as the scale of the problem increases, it takes more time for the sequence to converge to the solution.

\begin{figure}
    \centering
    \begin{tabular}{ccc}
        \makecell{\includegraphics[width=0.33\textwidth, trim ={15  0 0 0}, clip]{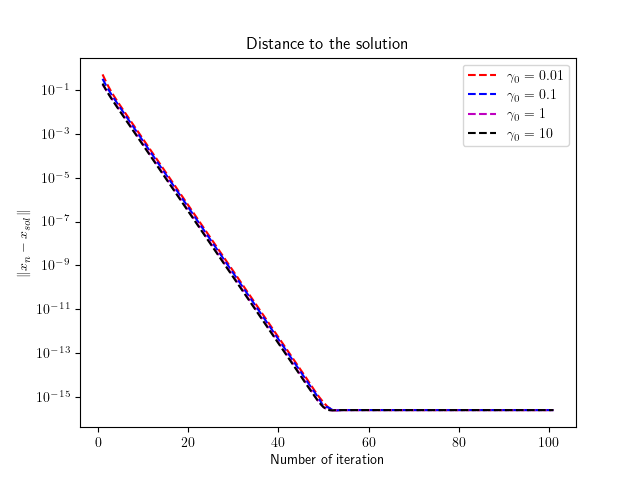}} & 
        \makecell{\includegraphics[width=0.33\textwidth, trim ={15  0 0 0}, clip]{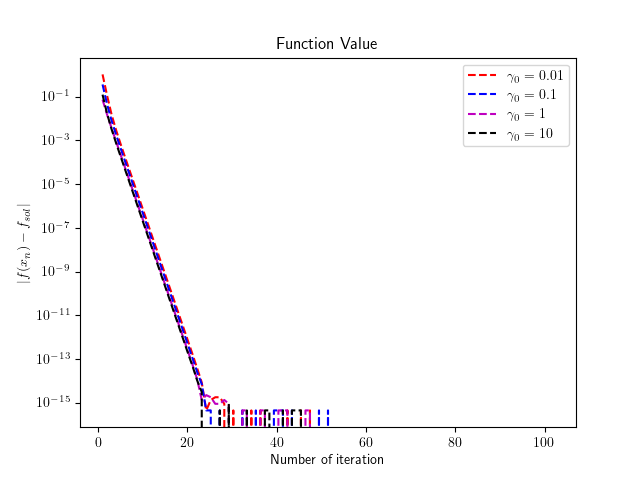}} & \makecell{\includegraphics[width=0.33\textwidth, trim ={15  0 0 0}, clip]{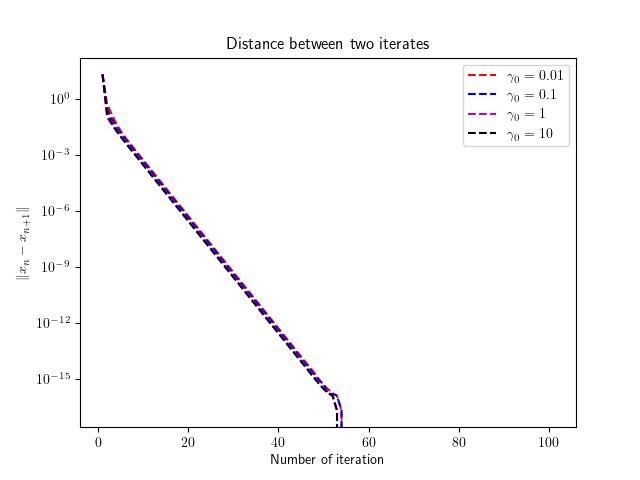}}\\
       \multicolumn{1}{c}{(a)}&  \multicolumn{1}{c}{(b)} & \multicolumn{1}{c}{(c)} 
    \end{tabular}
    \caption{Example~\ref{ex: numerical ball}, case 2, adaptive stepsize, initialization $x_{02}$.From left to right: distance between the current iteration and the solution; function value at each iteration; distance between two successive iterates.}
    \label{fig:plots1-ada-202}
\end{figure}
\end{enumerate}
\end{example}

\begin{example}
\label{ex: negative hessian function}
    Instead of the quadratic function in \eqref{eq: quadratic problem}, we consider a non-quadratic function which has one negative eigenvalue in its Hessian. Consider 
    \[
    f(x,y) = \frac{x^4}{12}+\frac{x^2}{2}-\frac{y^4}{12}-\frac{y^2}{2},
    \]
with the Hessian matrix
\[
\nabla^2 f(x,y) = \begin{bmatrix}
x^2 +1& 0\\
0 & -y^2 -1\\
\end{bmatrix}.
\]
It is difficult to calculate $\lsc^\R$-subgradient of $f$, but we can use Lemma \ref{lem: lsc subdiff and grad} as $f$ has a Lipschitz continuous gradient. We compute  $(a^f,u^f)\in\partial_{lsc}^\R  f(x,y)$ coordinate-wise i.e.
\begin{align*}
    a^f_x =  y^2+1 +\varepsilon, u^f_x = 2 a^f_x x +\nabla_x f(x,y)\\
    a^f_y =  y^2+1 +\varepsilon, u^f_y = 2 a^f_y y +\nabla_y f(x,y).
\end{align*}
As the eigenvalues of the Hessian determines the convexity of $f$, we want to keep $a_x^f,a_y^f$ above the smallest eigenvalues by some $\varepsilon>0$. We use the same update rule for $a_{n+1} = a_n -a_n^f$ and the same stopping criterion. We test for constant stepsizes $\gamma =0.01, 0.1, 1$.

We start the algorithm with initial $a_n=(200,200),(x_0,y_0) = (-5,-1), \varepsilon=0.1$ and max iteration $N=1001$. Here we show only the function value and the distance to the solution in Figure \ref{fig:plots3-hess}.

\begin{figure}
    \centering
    \begin{tabular}{cc}
        \makecell{\includegraphics[width=0.5\textwidth, trim ={15  0 0 0}, clip]{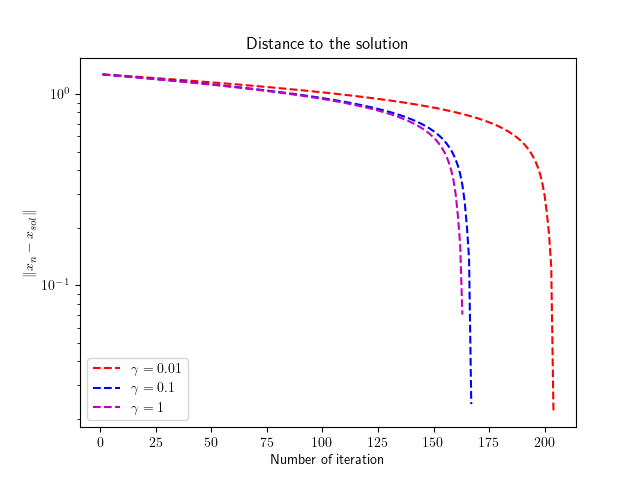}} & 
        \makecell{\includegraphics[width=0.5\textwidth, trim ={15  0 0 0}, clip]{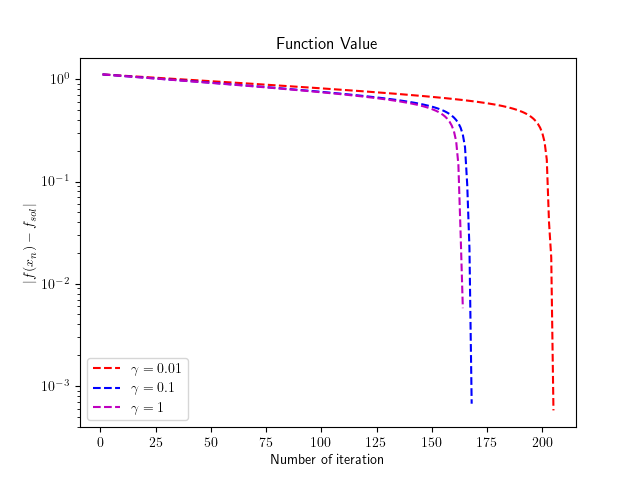}} \\
       \multicolumn{1}{c}{(a)}&  \multicolumn{1}{c}{(b)}
    \end{tabular}
    \caption{$\lsc^\R$-(PSG) for Example \ref{ex: negative hessian function}. From left to right: distance between the current iterate and the solution; function value at each iteration.}
    \label{fig:plots3-hess}
\end{figure}
\end{example}
\paragraph{Funding}{This work was funded by the European Union's Horizon 2020 research and innovation program under the Marie Sk{\l}odowska-Curie grant agreement No. 861137. This work represents only the authors' view and the European Commission is not responsible for any use that may be made of the information it contains.}

\bibliographystyle{acm}
\bibliography{ref_library}
\end{document}